\newtheorem{thm}{Theorem}
\newtheorem{prop}[thm]{Proposition}
\newtheorem{lem}[thm]{Lemma}
\newtheorem{cor}[thm]{Corollary}
\newtheorem{rem}[thm]{Remark}
\newtheorem{ex}[thm]{Example}
\newtheorem{mainthm}[thm]{Main Theorem}
\renewcommand{\labelenumi}{\rm(\roman{enumi})}
\numberwithin{thm}{section}
\numberwithin{equation}{section}
\newcommand{\N}{\mathbb{Z}_{\geq 0}}
\newcommand{\Z}{\mathbb{Z}}
\newcommand{\Q}{\mathbb{Q}}
\newcommand{\C}{\mathbb{C}}
\newcommand{\Qbar}{\overline{\mathbb{Q}}}
\newcommand{\Cbar}{\overline{C}}
\newcommand{\0}{\bm{0}}
\newcommand{\x}{\bm{x}}
\newcommand{\y}{\bm{y}}
\newcommand{\z}{\bm{z}}
\newcommand{\ba}{\bm{\alpha}}
\newcommand{\bbeta}{\bm{\beta}}
\newcommand{\bmu}{\bm{\mu}}
\newcommand{\bb}{\bm{b}}
\newcommand{\f}{\bm{f}}
\newcommand{\bj}{\bm{j}}
\newcommand{\m}{\bm{m}}
\newcommand{\bn}{\bm{n}}
\newcommand{\bI}{\bm{I}}
\newcommand{\bN}{\bm{N}}
\newcommand{\wa}{\widetilde{a}}
\newcommand{\wg}{\widetilde{g}}
\newcommand{\wh}{\widetilde{h}}
\newcommand{\wt}{\widetilde{\theta}}
\newcommand{\wG}{\widetilde{G}}
\newcommand{\wH}{\widetilde{H}}
\newcommand{\wR}{\widetilde{R}}
\newcommand{\wT}{\widetilde{\Theta}}
\newcommand{\cB}{\mathcal{B}}
\newcommand{\cL}{\mathcal{L}}
\newcommand{\cM}{\mathcal{M}}
\newcommand{\relmiddle}[1]{\mathrel{}\middle#1\mathrel{}}
\DeclareMathOperator*{\ord}{ord}
\def\hsymb#1{\mbox{\strut\rlap{\smash{\Huge$#1$}}\quad}}
\def\diag{\mathop{\rm diag}\nolimits}
\begin{document}

\setlength{\baselineskip}{19.4pt}

\title{Algebraic independence of the partial derivatives of certain functions with arbitrary number of variables}
\author{Haruki Ide and Taka-aki Tanaka}
\date{}
\maketitle

\begin{abstract}
We construct a complex entire function with arbitrary number of variables which has the following property: The infinite set consisting of all the values of all its partial derivatives of any orders at all algebraic points, including zero components, is algebraically independent. 
In Section~\ref{sec:2} of this paper, we develop a new technique involving linear isomorphisms and infinite products to replace the algebraic independence of the values of functions in question with that of 
functions easier to deal with. 
In Section~\ref{sec:2} and \ref{sec:3}, using the technique together with Mahler's method, we can reduce the algebraic independence of the infinite set mentioned above 
to the linear independence of certain rational functions 
modulo the rational function field of many variables. 
The latter one is solved by the discussions involving a certain valuation 
and a generic point in Section~\ref{sec:3} and \ref{sec:4}. 

\vspace{5pt}

\noindent{\it Keywords:} Algebraic independence; Infinite products; Mahler's method. \\
\noindent Mathematics Subject Classification 2020: 11J85.
\end{abstract}

\section{Introduction and the main results}\label{sec:1}
There is no general theory on the algebraic independence of the values of functions with several variables even if whose specializations are known to take algebraically independent values. 
For instance, there are severe difficulties in determining whether the set of the values $\{f(\x,\y)\mid \x\in S,\ \y\in T\}$ with $S\subset\C^m$ and  $T\subset\C^n$ of a given $m+n$ variable function $f(\x,\y)$ 
is algebraically independent or not when 
the sets of the specializations $\{f(\ba,\y)\mid \y\in T\}$ and $\{f(\x,\bbeta)\mid \x\in S\}$ for any fixed $\ba\in S$ and $\bbeta\in T$ are both algebraically independent. 
On the other hand, there are some complex entire functions of one variable introduced below each of which is known to have the following property: All its values as well as all the values of its derivatives of any orders at all nonzero algebraic numbers are algebraically independent. 
For the reason mentioned above, it is however difficult 
to extend this property to higher dimensions and obtain
a complex entire function of several variables having the following property: The infinite set consisting of all the values of all its partial derivatives of any orders at all algebraic points, including zero components, is algebraically independent. 
The main aim of this paper is to construct such a complex entire function with arbitrary number of variables. 

In previous results, such entire functions were known only for the case of one variable,
as far as the authors know, 
except the first author's result~\cite{Ide}, which treated that of two variables. 
First we mention the previous results on the case of one 
variable. 

Here and in what follows, let $a$ be an algebraic number with $0<|a|<1$. 
Define $\varphi_1(x)=\sum_{k=0}^\infty a^{k!}x^k$ and 
$\varphi_2(x)=\sum_{k=0}^\infty a^{d^k}x^k$, 
where $d\geq2$ is an integer. 
In what follows, we denote by $\N$ the set of nonnegative integers, by $\Qbar$ the field of algebraic numbers, by $F^\times$ the set of nonzero elements of a field $F$, and by $f^{(l)}(x)$ the derivative of a function $f(x)$ of order $l$ with respect to the complex variable $x$. 
Nishioka~\cite{Nishioka1986}, \cite{Nishioka1996} showed, 
respectively, that the infinite set 
$\{\varphi_1^{(l)}(\alpha)\mid l\in\N,\ \alpha\in\Qbar^\times
\}$ is algebraically independent and so is the infinite set $\{
\varphi_2^{(l)}(\alpha)\mid l\in\N,\ \alpha\in\Qbar^\times\}$. 
Obviously, well-known transcendental entire functions, e.g. the exponential function $e^x$, the sine function $\sin x$, and the reciprocal of the Gamma function $1/\Gamma(x)$, do not have 
the property such as $\varphi_1(x)$ and $\varphi_2(x)$. 
Another example of a complex entire function of one variable having this kind of property is given by infinite products generated by linear recurrences as follows. 

Let $\{R_k\}_{k\geq0}$ be a linear recurrence of nonnegative integers satisfying
\begin{equation}\label{eq:LRS}
R_{k+n}=c_1R_{k+n-1}+\cdots+c_nR_k\quad (k\geq 0),
\end{equation}
where $R_0,\ldots,R_{n-1}$ are not all zero and $c_1,\ldots, c_n$ are nonnegative integers with $c_n\neq0$. 
Define $$\varphi_3(y)\coloneqq\prod_{k=0}^\infty \left(1-a^{R_k}y\right).$$ 
If $\{R_k\}_{k\geq0}$ is a geometric progression, 
for example if $R_k=d^k\ (k\geq0)$ with $d$ as above, 
then 
$\varphi_3(y)$ does not have the property such as $\varphi_1(x)$ and $\varphi_2(x)$ (see Kurosawa, Tachiya, and Tanaka~\cite[Introduction]{KTT}). 
Hence we suppose throughout this paper that $\{R_k\}_{k\geq0}$ is not a geometric progression and so $n\geq2$.

Moreover, to ensure the convergence of $\varphi_3(y)$, we assume one of the simplest sufficient conditions under which 
the same value appears in $\{R_k\}_{k\geq0}$ only finitely many times. 
Define the polynomial associated with \eqref{eq:LRS} by
\begin{equation}\label{eq:Phi(X)}
\Phi(X)\coloneqq X^n-c_1X^{n-1}-\cdots-c_n.
\end{equation}
It is easily seen from Section~2 of 
Schlickewei and Schmidt~\cite{Sch-Sch} that, if $\Phi(\pm1)\neq0$ and if the ratio of any pair of distinct roots of $\Phi(X)$ is not a root of unity,
 then the number of those $k\in\N$ for which $R_k=m$ holds is finite for any $m\in\N$. This sufficient condition has been called 
{\it nondegenerate} or {\it strictly nondegenerate} condition in literature including \cite{Sch-Sch}. 
Referring to them, we assume in this paper the following condition (ND) on $\{R_k\}_{k\geq0}$:
\begin{enumerate}
\renewcommand{\labelenumi}{(ND)}
\item $\Phi(\pm1)\neq0$, the ratio of any pair of distinct roots of $\Phi(X)$ is not a root of unity, and $\{R_k\}_{k\geq0}$ is not a geometric progression.
\end{enumerate}
The second author~\cite{Tanaka2012} proved that the infinite set $\{\varphi_3^{(l)}(\beta)\mid l\in\N,\ \beta\in\Qbar^\times\setminus\{a^{-R_k}\}_{k\geq0}\}$ is algebraically independent if $\{R_k\}_{k\geq0}$ satisfies the condition (ND). This result was obtained by using the fact, 
deduced from the logarithmic derivation, 
that $\varphi_3^{(l)}(y)$ is expressed as the polynomial with integral coefficients of $\varphi_3(y),\ \psi(y)\coloneqq\sum_{k=0}^\infty a^{R_k}/(1-a^{R_k}y),\ \psi'(y),\ldots,\psi^{(l)}(y)$ together with the fact that $\varphi_3(y)$ and $\psi^{(l)}(y)\ (l\in\N)$ are specializations of certain functions of several variables satisfying Mahler type functional equations stated in Lemmas~\ref{lem:indep_values} and \ref{lem:indep_functions}. 

The first author~\cite{Ide} solved the remaining problem on the algebraic independency of the derivatives of $\varphi_3(y)$ at its zeros $\{a^{-R_k}\}_{k\geq0}$. 
Moreover, he succeeded to treat the two variable function 
$$\varphi_4(x,y):=\sum_{k=0}^\infty a^{R_k}x^k\prod_{\substack{k'=0,\\k'\neq k}}^\infty\left(1-a^{R_{k'}}y\right),$$
for which $\varphi'_3(y)=-\varphi_3(y)\psi(y)=-\varphi_4(1,y)$ holds,
and proved that the infinite set 
$$
\left\{\frac{\partial^{l+m}\varphi_4}{\partial x^l\partial y^m}(\alpha,\beta)\relmiddle| \alpha\in\Qbar^\times,\ \beta\in\Qbar,\ l\in\N,\ m\in\Z_{>0}\right\}{\textstyle\bigcup}\left\{\varphi'_3(\beta)\relmiddle|\beta\in\Qbar\right\}
$$
is algebraically independent if $\{R_k\}_{k\geq0}$ with the condition (ND) is strictly increasing, 
where $\Z_{>0}$ denotes the set of positive integers.

Now we prepare to state our result on the case of arbitrary number of variables. 
Let $y_1,\ldots,y_s$ be complex variables and write $\y\coloneqq(y_1,\ldots,y_s)$. For an analytic function $f(\y)$ and for a vector $\m=(m_1,\ldots,m_s)\in\N^s$, we denote
$$f^{(\m)}(\y)\coloneqq\frac{\partial^{m_1+\cdots+m_s}f}{\partial y_1^{m_1}\cdots\partial y_s^{m_s}}(\y).$$
As Corollary \ref{cor:perfect} below, we obtain an entire function $\Xi(\y)$ on $\C^s$ defined by an explicit formula and having the property that the infinite set
$$
\left\{\Xi^{(\m)}(\bbeta)\relmiddle|\bbeta\in\Qbar^s,\ \m\in\N^s\right\}
$$
is algebraically independent. 
We stress here the following two points. 

One is that the points $\bbeta\in\Qbar^s$ are allowed to have zero components 
in contrast with the previous results mentioned above. 
The other is that, not only such a function $\Xi(\y)$ must be asymmetric with respect to its variables $\y$, 
but also $\Xi(\y)$ must have much stronger asymmetry, 
that is, no algebraic relation should arise under any polynomial-type transformation of the variables. 
In fact, let $P_{ij}(\y)$ $(1\leq i\leq r,\ 1\leq j\leq s)$ be any polynomials in $\Qbar[\y]$ such that the $s$-tuples $\bm{P}_i(\y)\coloneqq(P_{i1}(\y),\ldots,P_{is}(\y))$ $(1\leq i\leq r)$ are distinct.  
If the functions $\Xi(\bm{P}_i(\y))$ $(1\leq i\leq r)$ are algebraically dependent over the rational function field $\Qbar(\y)$, 
considering the differences between the components of $\bm{P}_i(\y)$ and $\bm{P}_j(\y)$ for $1\leq i<j\leq r$, 
we can choose a point $\ba\in\Qbar^s$ such that the points $\bbeta_i\coloneqq\bm{P}_i(\ba)$ $(1\leq i\leq r)$ are distinct and the values $\Xi(\bbeta_i)$ $(1\leq i\leq r)$ are algebraically dependent, which violates the required  property. 

Our strategy is summarized as follows: 
In 
Subsection~\ref{sec:2.1} 
we establish key theorem and lemmas by which 
the algebraic independence of the values of functions in question is replaced with that of 
functions easier to deal with. 
More precisely, 
we define 
the infinite products 
$g_i(y_i)=\prod_{k=0}^\infty(1-a_k^{(i)}y_i)\ (1\leq i\leq s)$ and Lambert type series $h_i(y_i)=\sum_{k=0}^\infty a_k^{(i)}/(1-a_k^{(i)}y_i)\ (1\leq i\leq s)$, which are more general than $\varphi_3(y)$ and $\psi(y)$, respectively. 
Considering the 
product 
$g(\y)=\prod_{i=1}^s g_i(y_i)$, 
multiplying it by the multiple Lambert type series $h(\y)=\sum_{k=0}^\infty\prod_{i=1}^s a_k^{(i)}/(1-a_k^{(i)}y_i)$, 
and using $g_i(y_i)\ (1\leq i\leq s)$ and $h_i(y_i)\ (1\leq i\leq s)$ as auxiliary functions,  
we can proceed the main aim of this paper. 
Namely, 
the algebraic independence of all the values of all the partial derivatives in any orders of the entire function $\theta(\y)=g(\y)h(\y)$, at all algebraic points, is inherited from that of the values of all the partial derivatives 
of $h(\y)$ in any orders at whole algebraic points except the poles of $h(\y)$. 
The latter values are algebraically independent 
if $a_k^{(i)}=a_i^{R_k}\ (1\leq i\leq s)$ with $a_1,\ldots,a_s$ different enough, 
which is deduced in Subsection~\ref{sec:2.2} from Theorem~\ref{thm:main3} below. 
The proof of Theorem~\ref{thm:main3}, stated in Subsection~\ref{sec:3.3}, 
is based on the fact that $h(\y)$ satisfies a Mahler type functional equation of several variables. 

More precisely, we use Kubota's criteria, 
stated as Lemmas~\ref{lem:indep_values} and \ref{lem:indep_functions} 
in Subsection~\ref{sec:3.1}, 
respectively on the algebraic independence of the values of Mahler functions and that of the Mahler functions themselves over the rational function field. 
By the criteria, 
if on the contrary the values of all the partial derivatives 
of $h(\y)$ in any orders at whole algebraic points except the poles of $h(\y)$ are algebraically dependent, 
then there exists a nontrivial rational function solution of a certain 
simple 
functional equation of several variables. 
Finally in Section~\ref{sec:4}, by extending the method of the second author's previous result~\cite{Tanaka1999} 
involving a generic point of the algebraic variety defined by the denominator of such a solution, 
its 
existence 
is reduced to the linear dependence of certain rational functions of $s$ variables. 
This is a contradiction 
by Lemma~\ref{lem:lin indep}, which is proved 
in Subsection~\ref{sec:3.2} 
by using a certain valuation 
on function fields of $s$ variables.

Let $a_1,\ldots,a_s$ be multiplicatively independent algebraic numbers with $0<|a_i|<1$ $(1\leq i\leq s)$. For each $i$ $(1\leq i\leq s)$, we define
\begin{equation}\label{eq:G_i H_i}
G_i(y_i)\coloneqq\prod_{k=0}^\infty \left(1-a_i^{R_k}y_i\right),\quad H_i(y_i)\coloneqq \sum_{k=0}^\infty\frac{a_i^{R_k}}{1-a_i^{R_k}y_i}
\end{equation}
and for each algebraic number $\beta$, we define
\begin{equation}\label{eq:N_i}
N_{i,\beta}\coloneqq\#\{k\geq 0 \mid a_i^{-R_k}=\beta\}=\ord_{y_i=\beta}G_i(y_i).
\end{equation}
Moreover, for each $\bbeta=(\beta_1,\ldots,\beta_s)\in\Qbar^s$, we denote
$$\cM_{\bbeta}\coloneqq\{\m=(m_1,\ldots,m_s)\in\N^s\mid m_i\geq N_{i,\beta_i}\ {\rm for}
\ 1\leq i\leq s\}.$$
Let
\begin{equation}\label{eq:G H Theta}
G(\y)\coloneqq\prod_{i=1}^s G_i(y_i),\quad H(\y)\coloneqq\sum_{k=0}^\infty\prod_{i=1}^s\frac{a_i^{R_k}}{1-a_i^{R_k}y_i},\quad \Theta(\y)\coloneqq G(\y)H(\y).
\end{equation}

\begin{mainthm}\label{thm:main}
Suppose that $\{R_k\}_{k\geq0}$ satisfies the condition {\rm(ND)}. Then the infinite set 
$$
\left\{\Theta^{(\m)}(\bbeta)\relmiddle|\bbeta\in\Qbar^s,\ \m\in\cM_{\bbeta}\right\}
$$
is algebraically independent.
\end{mainthm}

In addition to the condition {\rm(ND)}, assume that $\{R_k\}_{k\geq0}$ is strictly increasing. Then 
$N_{i,\beta}\leq 1$ for all $\beta\in\Qbar$ and so
$
\Z_{>0}^s
$ is a subset of $\cM_{\bbeta}$ for any $\bbeta\in\Qbar^s$.
Hence
$$
\left\{\Theta^{(\m)}(\bbeta)\relmiddle|\bbeta\in\Qbar^s,\ \m\in\Z_{>0}^s
\right\}
$$
is an infinite subset of
$$
\left\{\Theta^{(\m)}(\bbeta)\relmiddle|\bbeta\in\Qbar^s,\ \m\in\cM_{\bbeta}\right\}.
$$
Therefore 
Theorem \ref{thm:main} implies that the infinite set
$$
\left\{\Theta^{(\m)}(\bbeta)\relmiddle|\bbeta\in\Qbar^s,\ \m\in\Z_{>0}^s
\right\}
$$
is algebraically independent. Namely, letting
\begin{equation}\label{eq:Xi}
\Xi(\y)\coloneqq\frac{\partial^{s}\Theta}{\partial y_1\cdots\partial y_s}(\y)=G(\y)\sum_{\substack{k_1,\ldots,k_s,l\geq0,\\ k_1,\ldots,k_s\neq l}}\prod_{i=1}^s\frac{-a_i^{R_{k_i}+R_l}}{(1-a_i^{R_{k_i}}y_i)(1-a_i^{R_l}y_i)},
\end{equation}
we obtain the following

\begin{cor}\label{cor:perfect}
Suppose that $\{R_k\}_{k\geq0}$ satisfies the condition {\rm(ND)}. Assume in addition that $\{R_k\}_{k\geq0}$ is strictly increasing. Then the infinite set 
$$
\left\{\Xi^{(\m)}(\bbeta)\relmiddle|\bbeta\in\Qbar^s,\ \m\in\N^s\right\}
$$
is algebraically independent.
\end{cor}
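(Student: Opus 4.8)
The plan is to obtain Corollary~\ref{cor:perfect} as a formal consequence of the Main Theorem by a single reindexing of partial derivatives, exploiting the definition \eqref{eq:Xi} of $\Xi$ as the first mixed partial derivative of $\Theta$. First I would record the elementary identity relating the partial derivatives of $\Xi$ to those of $\Theta$. Since $\Theta$ is entire, its mixed partials commute, so from $\Xi=\partial^{s}\Theta/(\partial y_1\cdots\partial y_s)$ one gets, for every $\m=(m_1,\ldots,m_s)\in\N^s$,
$$
\Xi^{(\m)}(\y)=\frac{\partial^{(m_1+1)+\cdots+(m_s+1)}\Theta}{\partial y_1^{m_1+1}\cdots\partial y_s^{m_s+1}}(\y)=\Theta^{(\m+\bm{1})}(\y),
$$
where $\bm{1}=(1,\ldots,1)$.

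Next I would observe that the translation $\m\mapsto\m+\bm{1}$ is a bijection of $\N^s$ onto $\Z_{>0}^s$, so the two index sets match exactly and, as subsets of $\C$,
$$
\left\{\Xi^{(\m)}(\bbeta)\mid\bbeta\in\Qbar^s,\ \m\in\N^s\right\}=\left\{\Theta^{(\bn)}(\bbeta)\mid\bbeta\in\Qbar^s,\ \bn\in\Z_{>0}^s\right\}.
$$
The point of passing to $\Xi$ is precisely that differentiating once in each variable absorbs the shift $\bm{1}$, converting the range $\N^s$ for $\Xi$ into the range $\Z_{>0}^s$ for $\Theta$.

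Finally I would invoke the discussion preceding the corollary: under (ND) together with the strictly increasing hypothesis one has $N_{i,\beta}\le 1$ for all $\beta\in\Qbar$, hence $\Z_{>0}^s\subseteq\cM_{\bbeta}$ for every $\bbeta\in\Qbar^s$. Consequently the right-hand set displayed above is an infinite subset of the set proved algebraically independent in the Main Theorem; since any subset of an algebraically independent set is again algebraically independent, the left-hand set is algebraically independent, which is the assertion of the corollary. As for difficulty, essentially all the genuine content resides in the Main Theorem, and the corollary is a reindexing argument with no real obstacle. The only points warranting care are the commutation of the mixed partials (immediate from entireness of $\Theta$) and the verification that $\m\mapsto\m+\bm{1}$ maps $\N^s$ \emph{onto} $\Z_{>0}^s$ without losing or duplicating elements, so that the equality of the two value sets—not merely an inclusion—holds.
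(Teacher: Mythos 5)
Your proposal is correct and follows essentially the same route as the paper: the paper also deduces the corollary from Theorem~\ref{thm:main} by observing that strict monotonicity of $\{R_k\}_{k\geq0}$ forces $N_{i,\beta}\leq 1$, hence $\Z_{>0}^s\subseteq\cM_{\bbeta}$ for every $\bbeta\in\Qbar^s$, and then identifying $\Xi^{(\m)}(\bbeta)=\Theta^{(\m+\bm{1})}(\bbeta)$ so that the index range $\N^s$ for $\Xi$ corresponds exactly to $\Z_{>0}^s$ for $\Theta$.
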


Here we exhibit a concrete example of Corollary \ref{cor:perfect} with arbitrary number $s$ of variables. 

\begin{ex}\label{ex:Fibonacci}
{\rm
Let $p_1,\ldots,p_s$ be distinct rational primes and $\{F_k\}_{k\geq0}$ the Fibonacci numbers defined by
$$F_0=0,\quad F_1=1,\quad F_{k+2}=F_{k+1}+F_k\quad(k\geq0).$$
Putting $a_i\coloneqq p_i^{-1}$ $(1\leq i\leq s)$ and regarding $\{F_{k+2}\}_{k\geq0}$ as $\{R_k\}_{k\geq0}$, we define the function $\Xi(\y)$ by \eqref{eq:Xi}, namely,
$$
\Xi(\y)=\prod_{i=1}^s\prod_{k=2}^\infty \left(1-p_i^{-F_k}y_i\right)\sum_{\substack{k_1,\ldots,k_s,l\geq2,\\ k_1,\ldots,k_s\neq l}}\prod_{i=1}^s\frac{-p_i^{-F_{k_i}-F_l}}{(1-p_i^{-F_{k_i}}y_i)(1-p_i^{-F_l}y_i)}.
$$
Then by Corollary \ref{cor:perfect} the infinite set 
$$
\left\{\Xi^{(\m)}(\bbeta)\relmiddle|\bbeta\in\Qbar^s,\ \m\in\N^s\right\}
$$
is algebraically independent.
}
\end{ex}

In the case of $s=1$, Theorem \ref{thm:main} is deduced from the following previous result of the first author on $\Theta(y)=\varphi_4(1,y)=\varphi_3(y)\psi(y)$.

\begin{prop}[A special case of Theorem 1.7 of Ide \cite{Ide}]\label{prop:s=1}
Suppose that $\{R_k\}_{k\geq0}$ satisfies the condition {\rm(ND)}. Then, if $s=1$, then the infinite set
$$
\left\{\Theta^{(m)}(\beta)\relmiddle|\beta\in\Qbar,\ m\geq N_\beta\right\}{\textstyle\bigcup}\left\{G^{(N_\beta)}(\beta)\relmiddle|\beta\in\Qbar^\times\right\}
$$
is algebraically independent, where $N_\beta\coloneqq N_{1,\beta}$ for each $\beta\in\Qbar$.
\end{prop}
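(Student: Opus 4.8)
The plan is to reduce Proposition \ref{prop:s=1} to the one-variable transcendence result already in the literature, namely Theorem 1.7 of Ide \cite{Ide}, by rewriting the entire set through the logarithmic derivative of a single infinite product. Writing $a=a_1$ in the case $s=1$, one has $G(y)=\prod_{k=0}^\infty(1-a^{R_k}y)=\varphi_3(y)$, and logarithmic differentiation gives
$$\frac{G'(y)}{G(y)}=-\sum_{k=0}^\infty\frac{a^{R_k}}{1-a^{R_k}y}=-H(y),$$
so that $\Theta(y)=G(y)H(y)=-G'(y)=-\varphi_3'(y)$. Hence $\Theta^{(m)}=-\varphi_3^{(m+1)}$ and $G^{(N_\beta)}=\varphi_3^{(N_\beta)}$, and the set in the statement consists, up to the harmless factor $-1$, entirely of derivative values of $\varphi_3$.

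The next step is to match the index sets. By \eqref{eq:N_i} one has $N_\beta=\ord_{y=\beta}\varphi_3(y)$, so that $\varphi_3^{(j)}(\beta)=0$ for $0\leq j<N_\beta$ while $\varphi_3^{(N_\beta)}(\beta)\neq0$. Therefore, for each fixed $\beta$, the family $\{\Theta^{(m)}(\beta)\mid m\geq N_\beta\}=\{-\varphi_3^{(j)}(\beta)\mid j\geq N_\beta+1\}$ supplies precisely the derivative values of order strictly above the vanishing order, while the single term $G^{(N_\beta)}(\beta)=\varphi_3^{(N_\beta)}(\beta)$ supplies the leading nonvanishing one. Taking the union and letting $\beta$ range over $\Qbar$ (with $\beta\in\Qbar^\times$ restricting the second family) shows that the set in the Proposition equals
$$\left\{\varphi_3^{(j)}(\beta)\mid\beta\in\Qbar,\ j\geq N_\beta\right\}\setminus\{\varphi_3(0)\}$$
up to sign, where the restriction $\beta\in\Qbar^\times$ in the second family removes exactly the trivial value $\varphi_3(0)=1$ attached to $\beta=0$ and $j=N_0=0$.

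Finally I would invoke the cited result. The displayed set is exactly the collection whose algebraic independence, under the sole hypothesis (ND), is asserted by Theorem 1.7 of Ide \cite{Ide} in the case $s=1$; it unites the theorem of Tanaka \cite{Tanaka2012} for the algebraic points that are not zeros of $\varphi_3$ with Ide's treatment of the derivatives at the zeros $\{a^{-R_k}\}_{k\geq0}$ and at $\beta=0$. Since algebraic independence is unaffected by multiplying the elements of a set by $-1$, this independence transfers at once to the set displayed in Proposition \ref{prop:s=1}.

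I expect the only delicate point to be the bookkeeping in the second step: one must verify that, at each $\beta$, the union of $\{\Theta^{(m)}(\beta)\mid m\geq N_\beta\}$ with $G^{(N_\beta)}(\beta)$ reconstructs every derivative value $\varphi_3^{(j)}(\beta)$ with $j\geq N_\beta$ without omitting the leading term or reintroducing a vanishing one, and that the passage to $\Qbar^\times$ in the second family deletes only $\varphi_3(0)$. It is also worth recording why the hypothesis (ND) alone suffices here, in contrast with the strict monotonicity required for the full $\varphi_4$ statement recalled in the introduction: only the $y$-derivatives of $\Theta=-\varphi_3'$ intervene, that is, only the part of Theorem 1.7 of Ide that concerns $\varphi_3$ itself, which is already valid under (ND).
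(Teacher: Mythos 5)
Your proposal is correct and takes essentially the same route as the paper: the paper offers no independent argument for Proposition~\ref{prop:s=1}, but presents it as a special case of Theorem~1.7 of Ide~\cite{Ide}, resting on the very identity $\Theta(y)=\varphi_4(1,y)=\varphi_3(y)\psi(y)=-\varphi_3'(y)$ that drives your reduction. Your explicit bookkeeping --- translating $\{\Theta^{(m)}(\beta)\mid m\geq N_\beta\}$ and $G^{(N_\beta)}(\beta)$ into the derivative values $\varphi_3^{(j)}(\beta)$ with $j\geq N_\beta$, minus the excluded trivial value $\varphi_3(0)=1$ --- is a correct and useful elaboration of that same citation, not a different method.
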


For obtaining the entire Main Theorem \ref{thm:main}, we actually show the following, which includes Theorem \ref{thm:main} with $s\geq2$.

\begin{thm}\label{thm:main2}
Suppose that $\{R_k\}_{k\geq0}$ satisfies the condition {\rm(ND)}. 
Assume in addition that $s\geq2$.
Then the infinite set 
\begin{align*}
&\left\{\Theta^{(\m)}(\bbeta)\relmiddle|\bbeta\in\Qbar^s,\ \m\in\cM_{\bbeta}\right\}\\
&{\textstyle\bigcup}\left\{G_i^{(m)}(\beta)\relmiddle|1\leq i\leq s,\ \beta\in\Qbar^\times,\ m\geq N_{i,\beta}\right\}\\
&{\textstyle\bigcup}\left\{G_i^{(m)}(0)\relmiddle|1\leq i\leq s,\ m\geq 1\right\}
\end{align*}
is algebraically independent.
\end{thm}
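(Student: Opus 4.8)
The plan is to prove Theorem~\ref{thm:main2} in two stages: a reduction that replaces the values of $\Theta=GH$ by the values of the multiple Lambert type series $H$, followed by a proof that the latter are algebraically independent by Mahler's method. Throughout, the single-variable factors $G_i$ and their derivatives play the role of auxiliary functions, and the multiplicative independence of $a_1,\ldots,a_s$ together with condition (ND) are what ultimately make the argument go through.

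For the reduction I would use the key theorem of Subsection~\ref{sec:2.1}, localizing at each point $\bbeta\in\Qbar^s$. Since $\Theta=GH$ with $G(\y)=\prod_{i=1}^s G_i(y_i)$, I would split $H$ into the sum of its finitely many terms singular at $\bbeta$ — those indexed by $k$ with $a_i^{-R_k}=\beta_i$ for some $i$, finite in number by (ND) — and a remainder $\wH$ holomorphic at $\bbeta$. Writing $\Theta=G\cdot(\text{singular part})+G\wH$, the first summand is a product of single-variable functions whose derivatives at $\bbeta$ lie in the field generated by the values $G_i^{(m)}(\beta_i)$, while Leibniz's rule applied to $G\wH$ expresses its derivatives at $\bbeta$ through those same $G_i$-values and the derivatives of $\wH$ at $\bbeta$. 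The defining inequality $m_i\geq N_{i,\beta_i}$ of $\cM_\bbeta$ is exactly the range in which the derivatives of the regularized series $\wH$ enter nontrivially; for smaller $m_i$ the value $\Theta^{(\m)}(\bbeta)$, although generally nonzero, is already an algebraic function of the $G_i$-values and carries no new information. The key theorem packages this correspondence as a linear isomorphism and thereby reduces the algebraic independence asserted in Theorem~\ref{thm:main2} to that of the values of the partial derivatives of $H$ at algebraic points off its polar locus, taken together with the auxiliary $G_i$-families.

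The second stage is the algebraic independence of these $H$-values, the content of Theorem~\ref{thm:main3}, which I would establish by Mahler's method. The recurrence \eqref{eq:LRS} induces a monomial transformation governed by the companion matrix of $\Phi(X)$, under which $H$ is a specialization of a function satisfying a Mahler type functional equation in additional variables; condition (ND) — $\Phi(\pm1)\neq0$ and no ratio of two distinct roots of $\Phi(X)$ a root of unity — is precisely the admissibility hypothesis needed to apply Kubota's criteria, Lemmas~\ref{lem:indep_values} and \ref{lem:indep_functions}. By those criteria, any algebraic dependence among the values would force a nontrivial rational-function solution, over $\Qbar(\y)$, of an associated functional equation. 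Extending the generic-point method of \cite{Tanaka1999} to $s$ variables (Section~\ref{sec:4}), I would reduce the existence of such a solution to a linear dependence, modulo $\Qbar(\y)$, among certain explicit rational functions of $y_1,\ldots,y_s$, which is contradicted by Lemma~\ref{lem:lin indep}.

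I expect two steps to be the main obstacles. The first is making the reduction of Subsection~\ref{sec:2.1} uniform over all algebraic points, including those with zero components and those on the polar locus of $H$: the simultaneous cancellation of the zeros of $G$ against the singular terms of $H$ in several variables at once, and the verification that the resulting correspondence is genuinely a linear isomorphism, is the essentially new device and the most delicate point. The second is Lemma~\ref{lem:lin indep} itself, which must hold for arbitrary $s$ despite the strong coupling of the variables in $H$; I would prove it by introducing a suitable valuation on the field of rational functions of $s$ variables under which the relevant rational functions acquire pairwise distinct values, so that no nontrivial linear relation can hold. It is in this valuation computation that the asymmetry forced on $\Theta$ and the multiplicative independence of $a_1,\ldots,a_s$ are finally consumed.
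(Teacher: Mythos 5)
Your proposal follows essentially the same route as the paper: splitting $H$ at each point into its finitely many singular terms plus a regular remainder and encoding the correspondence in invertible lower-triangular linear relations is exactly how the paper reduces Theorem~\ref{thm:main2} to Theorem~\ref{thm:main3} (via Theorem~\ref{thm:linear relation} and Lemmas~\ref{lem:alg relation} and~\ref{lem:alg relation2}), and your plan for Theorem~\ref{thm:main3} (Kubota's criteria, the generic-point argument of Section~\ref{sec:4}, and the valuation proof of Lemma~\ref{lem:lin indep}) matches the paper's Sections~\ref{sec:3} and~\ref{sec:4}. The only cosmetic difference is that the paper discards a whole initial segment $\{R_k\}_{k<k_0}$ containing all singular indices, so that the remainder is again generated by a linear recurrence satisfying (ND) and Theorem~\ref{thm:main3} applies to it verbatim, rather than removing only the singular terms themselves.
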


Theorem \ref{thm:main2} is deduced from the following theorem together with key theorem and lemmas stated in the next section.

\begin{thm}\label{thm:main3}
Suppose that $\{R_k\}_{k\geq0}$ satisfies the condition {\rm(ND)}.
Assume in addition that $s\geq2$.
Then the infinite set 
\begin{align*}
&\left\{H^{(\m)}(\bbeta)\relmiddle|\bbeta\in\cB^s,\ \m\in\N^s\right\}\\
&{\textstyle\bigcup}\left\{H_i^{(m)}(\beta)\relmiddle|1\leq i\leq s,\ \beta\in\cB,\ m\geq 0\right\}\\
&{\textstyle\bigcup}\left\{G_i(\beta)\relmiddle|1\leq i\leq s,\ \beta\in\cB\setminus\{0\}\right\}
\end{align*}
is algebraically independent, where
$$\cB\coloneqq\Qbar\setminus\bigcup_{i=1}^s\{a_i^{-R_k}\mid k\geq0\}=\{\beta\in\Qbar\mid N_{i,\beta}=0\ {\rm for}
\ 1\leq i\leq s\}.$$
\end{thm}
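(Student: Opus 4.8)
The plan is to prove Theorem~\ref{thm:main3} by Mahler's method, following the route sketched in the introduction: realize all the functions in question as specializations of multivariable Mahler functions, invoke Kubota's criteria (Lemmas~\ref{lem:indep_values} and \ref{lem:indep_functions}) to reduce the asserted algebraic independence of values to a statement about rational solutions of a functional equation, and finally dispose of that functional equation by a generic-point argument feeding into Lemma~\ref{lem:lin indep}. First I would set up the monomial transformation $\Omega$ attached to the companion matrix of $\Phi(X)$ in \eqref{eq:Phi(X)}, acting on $s$ blocks of variables $\z_i=(z_{i,1},\ldots,z_{i,n})$ $(1\le i\le s)$, so that at the point $\z_0$ with $z_{i,j}=a_i^{R_{j-1}}$ one has $(\Omega^k\z_0)_{i,1}=a_i^{R_k}$ for all $k$. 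Writing $u_i(\z)=z_{i,1}$, the functions
$$\wH(\z,\y)=\sum_{k=0}^\infty\prod_{i=1}^s\frac{u_i(\Omega^k\z)}{1-u_i(\Omega^k\z)y_i},\qquad \wG_i(\z,y_i)=\prod_{k=0}^\infty\left(1-u_i(\Omega^k\z)y_i\right),$$
together with the one-block analogues of $\wH$ producing the $H_i$, satisfy
$$\wH(\z,\y)=\prod_{i=1}^s\frac{z_{i,1}}{1-z_{i,1}y_i}+\wH(\Omega\z,\y),\qquad \wG_i(\z,y_i)=(1-z_{i,1}y_i)\,\wG_i(\Omega\z,y_i),$$
namely additive Mahler equations with rational inhomogeneous parts and a multiplicative (unit) equation, and they specialize at $\z=\z_0$ to $H,H_i,G_i$. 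Differentiating in $\y$ and then specializing $\y=\bbeta$ with $\bbeta\in\cB^s$ (which avoids every denominator, by the definition of $\cB$) produces the finite families of Mahler functions of $\z$ whose values at $\z_0$ are exactly the numbers in the first two sets of the theorem, each again satisfying an additive equation whose inhomogeneous part is a $\y$-derivative of the above and hence rational in $\z$; the third set enters through the units $\wG_i(\z,\beta)$ without differentiation.

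Next I would check the admissibility hypotheses of Mahler's method. Condition (ND) guarantees, via the Schlickewei--Schmidt analysis \cite{Sch-Sch}, the spectral properties of the companion matrix needed for convergence and non-degeneracy (a dominant root and no root-of-unity ratios of roots of $\Phi(X)$), while the multiplicative independence of $a_1,\ldots,a_s$ ensures that $\z_0$ is an admissible point with no spurious multiplicative relations among its coordinates, and in particular that the units $G_i(\beta)$ satisfy no nontrivial multiplicative relation across distinct $i$. With this in place, Lemma~\ref{lem:indep_values} reduces the algebraic independence of the values at $\z_0$ to the algebraic independence of the corresponding functions over $\Qbar(\z)$, and Lemma~\ref{lem:indep_functions} asserts that a failure of the latter forces a nontrivial rational solution $P(\z)\in\Qbar(\z)$ of a functional equation of the shape
$$P(\z)-P(\Omega\z)=B(\z),$$
where $B(\z)$ is a nonzero $\Qbar$-linear combination of the inhomogeneous parts collected above (the units $\wG_i$ contributing through their logarithmic-derivative relations), with not all coefficients zero. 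Assuming, for contradiction, that the set in Theorem~\ref{thm:main3} is algebraically dependent, I obtain such a $P$ and such a $B$.

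The heart of the argument is to show that no such $P$ exists. Here I would extend the generic-point method of \cite{Tanaka1999}: the poles of $B(\z)$ lie on the hypersurfaces $1-z_{i,1}\beta_i=0$ and, after applying $\Omega$, on their $\Omega$-preimages. Evaluating the equation $P(\z)-P(\Omega\z)=B(\z)$ at a generic point of one such hypersurface and comparing the poles of $P(\z)$ and $P(\Omega\z)$ that are genuinely created at each step shows that the coefficients of $B$ cannot cancel unless they satisfy a linear relation among the rational functions
$$\prod_{i=1}^s\frac{1}{(1-z_{i,1}\beta_i)^{m_i+1}}\qquad\text{and}\qquad\frac{1}{(1-z_{i,1}\beta)^{m+1}}$$
(indexed by the data $(\bbeta,\m)$ and $(i,\beta,m)$ of the three sets) in the $s$ variables $z_{1,1},\ldots,z_{s,1}$, modulo the rational function field of those variables. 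This is precisely the configuration excluded by Lemma~\ref{lem:lin indep}, proved by the valuation on function fields of $s$ variables; hence all coefficients of $B$ vanish, so $B\equiv0$, and triviality of the original relation follows, giving the desired contradiction.

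I expect the main obstacle to be exactly this last reduction: carefully tracking, across all $s$ blocks and all derivative orders $\m$, which poles survive under $\Omega$, and matching the resulting constraints to the precise hypotheses of Lemma~\ref{lem:lin indep}. The delicate point is that the genuinely multivariate products appearing in $H$ must be prevented from collapsing against the one-variable contributions coming from the $H_i$ and the units $G_i$; ensuring that these interact in exactly the way demanded by Lemma~\ref{lem:lin indep}, rather than producing some unforeseen cancellation, is where the bulk of the technical work will lie.
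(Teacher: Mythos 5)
Your skeleton does match the paper's: build Mahler functions from the companion matrix $\Omega_1$ acting blockwise, invoke Lemmas~\ref{lem:indep_values} and \ref{lem:indep_functions}, and finish with Lemma~\ref{lem:lin indep}. But two essential pieces are missing. First, Lemma~\ref{lem:indep_functions} does \emph{not} produce a single additive equation in which the infinite products enter ``through their logarithmic-derivative relations''; it yields two genuinely separate alternatives: (i) a rational solution of an additive equation whose inhomogeneous part involves only the $b_i$'s coming from the Lambert-type series, or (ii) a nontrivial multiplicative relation $\prod_{i,j}g_{i,j}(\z_i)^{d_{i,j}}\in\Qbar(\z)^\times$ among the units. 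Your proposal silently folds (ii) into (i), so that branch is unhandled. In the paper it requires its own argument: restrict $g$ to a single block via Lemma~\ref{lem:Puiseux}, apply Lemma~\ref{lem:1999} (Tanaka's theorem on solutions of $g(\z_0)=R(M(\z_0))g(\Omega_1\z_0)$, which you never cite) to obtain the polynomial identity $\prod_{j}(1-\beta_jX_i)^{d_{i,j}}=1$, and only \emph{then} take the logarithmic derivative and apply Lemma~\ref{lem:lin indep} with $s=1$, $M=0$. Multiplicative independence of the values $a_1,\ldots,a_s$ does not by itself rule out multiplicative relations among the \emph{functions} $g_{i,j}$ over $\Qbar(\z)$, which is what case (ii) is about.

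Second, the step you call the heart of the argument is exactly the content of Theorem~\ref{thm:main4}, whose proof occupies all of Section~\ref{sec:4} and rests on Lemmas~\ref{lem:gcd}, \ref{lem:coprime}, \ref{lem:not geom}, and \ref{lem:monomial}; a sketch of ``comparing poles created at each step'' does not substitute for it (a priori poles of $P(\z)$ and $P(\Omega_2\z)$ could cancel, and excluding this is precisely the prime-factor chase along the $\Omega_2$-orbit plus the generic-point/transcendence-degree argument, followed by a separate term-counting argument to force the inhomogeneous part to be constant). Note also the paper does not prove ``no rational solution exists'': it first upgrades the rational solution $h$ to a power series via Lemma~\ref{lem:const} (since $h$ minus the convergent series solution is $\Omega_2$-invariant, hence constant), and only then applies Theorem~\ref{thm:main4} to conclude the rational function \eqref{eq:lin combi 4} is a constant, necessarily $0$. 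Moreover, your final appeal to Lemma~\ref{lem:lin indep} is not ``precisely'' its configuration: the vanishing combination mixes full products (every exponent $\geq1$) with one-variable terms (exponent $0$ in the remaining variables), which the lemma does not cover. The paper resolves exactly the obstacle you flag, using $s\geq2$: substitute $X_{i'}=0$ for all $i'\neq i$ to kill every product term, apply the lemma with $s=1$ to get $c_{i,j,m}=0$, then apply it once more to the surviving product terms. Finally, a technical slip in your setup: with the companion matrix, the point $z_{i,j}=a_i^{R_{j-1}}$ and the coordinate function $u_i(\z)=z_{i,1}$ do not satisfy $(\Omega^k\z_0)_{i,1}=a_i^{R_k}$ (already $k=1$ gives exponent $c_1R_0+R_1$); the correct device is the monomial $M(\z_i)=z_{i1}^{R_{n-1}}\cdots z_{in}^{R_0}$ of \eqref{eq:M} evaluated along the orbit of $(1,\ldots,1,a_i)$, as in \eqref{eq:func_eq 1}--\eqref{eq:func_eq 3}.
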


\section{Proof of Theorem \ref{thm:main2}}\label{sec:2}
In this section we provide the explicit invertible linear relations, 
which are represented by lower triangular matrices, 
between the values of the partial derivatives of $\theta(\y)$ and those of $h(\y)$ defined by \eqref{eq:g h theta} below. 
Then, using such relations, we deduce Theorem~\ref{thm:main2} from Theorem~\ref{thm:main3}. 

\subsection{Preparation for the proof of Theorem \ref{thm:main2}}\label{sec:2.1}
Let $\{a_k^{(i)}\}_{k\geq0}$ $(1\leq i\leq s)$ be sequences of algebraic numbers satisfying
$$
\sum_{k=0}^\infty|a_k^{(i)}|<\infty
$$
and let
\begin{equation}\label{eq:g_i h_i}
g_i(y_i)\coloneqq\prod_{k=0}^\infty \left(1-a_k^{(i)}y_i\right),\quad h_i(y_i)\coloneqq \sum_{k=0}^\infty\frac{a_k^{(i)}}{1-a_k^{(i)}y_i}
\quad\;(1\leq i\leq s).
\end{equation}
Define
\begin{equation}\label{eq:g h theta}
g(\y)\coloneqq\prod_{i=1}^s g_i(y_i),\quad h(\y)\coloneqq\sum_{k=0}^\infty\prod_{i=1}^s\frac{a_k^{(i)}}{1-a_k^{(i)}y_i},\quad \theta(\y)\coloneqq g(\y)h(\y).
\end{equation}

First we establish the 
invertible linear relations between the values 
at any point $\bbeta=(\beta_1,\ldots,\beta_s)\in\Qbar^s$ 
of the functions above and those defined by `shifted' sequences of $\{a_k^{(i)}\}_{k\geq0}$ $(1\leq i\leq s)$. 
Similarly to the numbers $N_{i,\beta_i}$ $(1\leq i\leq s)$ defined by \eqref{eq:N_i}, we define the numbers $n_i$ $(1\leq i\leq s)$ by
$$n_i\coloneqq\#\{k\geq 0 \mid a_k^{(i)}\neq0,\ (a_k^{(i)})^{-1}=\beta_i\}=\ord_{y_i=\beta_i}g_i(y_i)\quad(1\leq i\leq s).$$
Let $\bn\coloneqq(n_1,\ldots,n_s).$ Since $a_k^{(i)}\to0$ as $k$ tends to infinity for all $1\leq i\leq s$, there exists a sufficiently large integer $k_0$ such that $1-a_k^{(i)}\beta_i\neq0$ $(1\leq i\leq s)$ for all $k\geq k_0$. Put $\wa_k^{(i)}\coloneqq a_{k+k_0}^{(i)}$ $(1\leq i\leq s,\ k\geq0)$. Let $\wg_i(y_i)$, $\wh_i(y_i)$ $(1\leq i\leq s)$ and $\wg(\y)$, $\wh(\y)$, $\wt(\y)$ be the functions given respectively by \eqref{eq:g_i h_i} and \eqref{eq:g h theta} with the sequences $\{\wa_k^{(i)}\}_{k\geq0}$ $(1\leq i \leq s)$ in place of $\{a_k^{(i)}\}_{k\geq0}$ $(1\leq i \leq s)$. For each positive integer $N$, let $\cL_N(\mathcal{R})$ (resp. $\cL_N^\ast(\mathcal{R})$) be a multiplicative group of $N\times N$ lower triangular matrices with entries in the commutative ring $\mathcal{R}$ whose diagonal entries are units (resp. 1's) of $\mathcal{R}$. 
The following theorem plays a crucial role in the proof of Theorem~\ref{thm:main2}.

\begin{thm}\label{thm:linear relation}
Let $M$ be a nonnegative integer. Then, for each $i$ $(1\leq i\leq s)$, there exists $L_i\in\cL_{M+1}(\Qbar)$ such that
\begin{equation}\label{eq:linear relation 1}
\left(
\begin{array}{c}
g_i^{(n_i)}(\beta_i) \\
g_i^{(1+n_i)}(\beta_i) \\
\vdots \\
g_i^{(M+n_i)}(\beta_i)
\end{array}
\right)=L_i\left(
\begin{array}{c}
\wg_i(\beta_i) \\
\wg_i^{\,\prime}(\beta_i) \\
\vdots \\
\wg_i^{(M)}(\beta_i)
\end{array}
\right).
\end{equation}
Moreover, let $\bm{\theta}$ and $\widetilde{\bm{\theta}}$ be column vectors given by sorting the values $\theta^{(\m+\bn)}(\bbeta)$ and $\wt^{(\m)}(\bbeta)$ in an ascending lexicographical order of $\m\in{\{0,\ldots,M\}}^s$, respectively. Then there exists $L\in\cL_{{(M+1)}^s}(\Qbar)$ such that
\begin{equation}\label{eq:linear relation 2}
\bm{\theta}\equiv L\widetilde{\bm{\theta}}\pmod{W^{{(M+1)}^s}},
\end{equation}
where $W$ is the $\Qbar$-vector space generated by $\{\wg^{(\m)}(\bbeta)\mid\m\in{\{0,\ldots,M+1\}}^s\}$.
\end{thm}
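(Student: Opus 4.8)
The plan is to realize both $g_i$ and $\theta$ as products of an explicit polynomial factor (which absorbs all the vanishing at $\bbeta$) with the shifted functions $\wg_i$, $\wt$, and then to read off the triangular relations from the Leibniz rule. Write $P_i(y_i)\coloneqq\prod_{k=0}^{k_0-1}(1-a_k^{(i)}y_i)$, so that $g_i=P_i\wg_i$; since $\wg_i(\beta_i)\neq0$ by the choice of $k_0$, the polynomial $P_i$ vanishes at $\beta_i$ to the exact order $n_i$, whence $P_i(y_i)=(y_i-\beta_i)^{n_i}Q_i(y_i)$ with $Q_i(\beta_i)\neq0$ and $P_i^{(n_i)}(\beta_i)=n_i!\,Q_i(\beta_i)\neq0$.

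First I would prove \eqref{eq:linear relation 1}. Applying the Leibniz rule to $g_i=P_i\wg_i$ and evaluating at $\beta_i$, every term carrying a factor $P_i^{(r)}(\beta_i)$ with $r<n_i$ drops out, leaving
\[
g_i^{(n_i+j)}(\beta_i)=\sum_{l=0}^{j}\binom{n_i+j}{l}P_i^{(n_i+j-l)}(\beta_i)\,\wg_i^{(l)}(\beta_i)\qquad(0\le j\le M).
\]
This is exactly \eqref{eq:linear relation 1} with $L_i$ the $(M+1)\times(M+1)$ matrix whose $(j,l)$-entry is the displayed coefficient: it is lower triangular because the coefficient vanishes for $l>j$, and its diagonal entries $\binom{n_i+j}{j}P_i^{(n_i)}(\beta_i)$ are nonzero, so $L_i\in\cL_{M+1}(\Qbar)$.

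For \eqref{eq:linear relation 2} the key is the decomposition $\theta=P\wt+\Pi\wg$, obtained as follows. Writing $P(\y)=\prod_{i=1}^sP_i(y_i)$ and $\rho(\y)\coloneqq h(\y)-\wh(\y)=\sum_{k=0}^{k_0-1}\prod_{i=1}^s a_k^{(i)}/(1-a_k^{(i)}y_i)$, one has $g=P\wg$ and $\theta=g(\wh+\rho)=P\wg\wh+P\wg\rho=P\wt+\Pi\wg$, where $\Pi\coloneqq P\rho$ is a polynomial, each pole of $\rho$ at $(a_k^{(i)})^{-1}$ with $k<k_0$ being cancelled by the corresponding factor of $P_i$. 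Differentiating $P\wt$ by Leibniz and using that $P^{(\bm j)}(\bbeta)=\prod_iP_i^{(j_i)}(\beta_i)$ vanishes unless $\bm j\ge\bn$, I obtain $(P\wt)^{(\m+\bn)}(\bbeta)=\sum_{\m'\le\m}\binom{\m+\bn}{\m'}P^{(\m+\bn-\m')}(\bbeta)\,\wt^{(\m')}(\bbeta)$, a lower-triangular combination (in the ascending lexicographical order, since $\m'\le\m$ componentwise forces $\m'$ to be lex-smaller) whose diagonal coefficient $\binom{\m+\bn}{\m}\prod_i n_i!\,Q_i(\beta_i)$ is nonzero. This furnishes the matrix $L\in\cL_{(M+1)^s}(\Qbar)$.

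It remains to check that the remaining term $(\Pi\wg)^{(\m+\bn)}(\bbeta)$ lies in $W$, and this is the step I expect to be the crux. Expanding $\Pi=\sum_{k<k_0}\Pi_k$ with $\Pi_k(\y)=\prod_i\big(P_i(y_i)a_k^{(i)}/(1-a_k^{(i)}y_i)\big)$, I would show that for each $k$ and each $i$ the one-variable factor vanishes at $\beta_i$ to order $n_i$ or $n_i-1$, the value $n_i-1$ occurring exactly when $\beta_i=(a_k^{(i)})^{-1}$, i.e. when one zero of $P_i$ is consumed by the Lambert-type denominator; in either case the order is at least $n_i-1$. Consequently $\Pi^{(\bm j)}(\bbeta)=0$ unless $j_i\ge n_i-1$ for every $i$, so in the Leibniz expansion $(\Pi\wg)^{(\m+\bn)}(\bbeta)=\sum_{\m'}\binom{\m+\bn}{\m'}\Pi^{(\m+\bn-\m')}(\bbeta)\,\wg^{(\m')}(\bbeta)$ only indices with $m'_i\le m_i+1\le M+1$ survive; each such summand is a $\Qbar$-multiple of $\wg^{(\m')}(\bbeta)$ with $\m'\in\{0,\ldots,M+1\}^s$, hence lies in $W$. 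This ``$-1$'' in the vanishing order is precisely what forces the exponent range $\{0,\ldots,M+1\}$ in the definition of $W$; combining the two contributions then yields \eqref{eq:linear relation 2}.
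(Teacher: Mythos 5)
Your proposal is correct and follows essentially the same route as the paper's own proof: the same factorization $g_i=(\text{finite product})\cdot\wg_i$ giving the triangular relation \eqref{eq:linear relation 1}, the same decomposition $\theta=P\wt+\Pi\wg$ (the paper's $\theta_1+\theta_2$), the same lexicographic lower-triangularity observation, and the same key point that the polynomial multiplying $\wg$ vanishes at $\bbeta$ only to order $n_i-1$ in each variable, which is exactly why $W$ needs derivative orders up to $M+1$. The only differences are cosmetic (the paper splits your $P_i$ into a pure zero factor times a unit factor $Q_i$, and extracts the common factor $U_i(y_i)=(1-\beta_i^{-1}y_i)^{n_i-1}$ globally rather than arguing term-by-term over $k<k_0$).
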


\begin{rem}\label{rem:vector space W}
{\rm
For each $\m=(m_1,\ldots,m_s)\in{\{0,\ldots,M+1\}}^s$, we see by \eqref{eq:linear relation 1} that $g^{(\m+\bn)}(\bbeta)$ is represented as a linear combination of $\wg^{(\bmu)}(\bbeta)$ $(\bmu\in\{0,\ldots,m_1\}\times\cdots\times\{0,\ldots,m_s\})$ and conversely $\wg^{(\m)}(\bbeta)$ is represented as that of $g^{(\bmu+\bn)}(\bbeta)$ $(\bmu\in\{0,\ldots,m_1\}\times\cdots\times\{0,\ldots,m_s\})$. Hence $\{g^{(\m+\bn)}(\bbeta)\mid\m\in{\{0,\ldots,M+1\}}^s\}$ and $\{\wg^{(\m)}(\bbeta)\mid\m\in{\{0,\ldots,M+1\}}^s\}$ generate the same $\Qbar$-vector space $W$ in Theorem~\ref{thm:linear relation}.
}
\end{rem}

\begin{proof}[Proof of Theorem~\ref{thm:linear relation}]
Let
$$I_1\coloneqq\{i\in\{1,\ldots,s\}\mid n_i\geq1\},\quad I_2\coloneqq\{i\in\{1,\ldots,s\}\mid n_i=0\}.$$
For each $i\in\{1,\ldots,s\}$, we define the polynomials $P_i(y_i),Q_i(y_i)\in\Qbar[y_i]$ by
$$
P_i(y_i)\coloneqq
\begin{dcases}
\left(1-\beta_i^{-1}y_i\right)^{n_i}, & i\in I_1,\\
1, & i\in I_2
\end{dcases}
$$
and
$$
Q_i(y_i)\coloneqq
\begin{dcases}
\prod_{\substack{k=0\\a_k^{(i)}\neq\beta_i^{-1}}}^{k_0-1}\left(1-a_k^{(i)}y_i\right), & i\in I_1,\\
\prod_{k=0}^{k_0-1}\left(1-a_k^{(i)}y_i\right), & i\in I_2.
\end{dcases}
$$
First, for a fixed $i\in\{1,\ldots,s\}$, we construct $L_i\in\cL_{M+1}(\Qbar)$ satisfying \eqref{eq:linear relation 1}.
Since
$$g_i(y_i)=\prod_{k=0}^{k_0-1}\left(1-a_k^{(i)}y_i\right)\times\prod_{k=k_0}^\infty\left(1-a_k^{(i)}y_i\right)=P_i(y_i)Q_i(y_i)\wg_i(y_i),$$
we see that, for any $m\geq0$,
$$g_i^{(m+n_i)}(\beta_i)=\sum_{\mu=0}^{m}\binom {m+n_i}{n_i\quad m-\mu\quad\mu} p_iQ_i^{(m-\mu)}(\beta_i)\wg_i^{(\mu)}(\beta_i),$$
where $p_i\coloneqq P_i^{(n_i)}(y_i)\in\Qbar^\times$. Hence we obtain \eqref{eq:linear relation 1} by putting
$$
L_i\coloneqq\left(
\begin{array}{cccc}
p_iq_i&&&\\
&\binom{1+n_i}{1}p_iq_i&&\hsymb{0}\\
&&\ddots&\\
\hsymb{*}&&&\binom{M+n_i}{M}p_iq_i
\end{array}
\right),
$$
where $q_i\coloneqq Q_i(\beta_i)\in\Qbar^\times$.

In the rest of the proof we show \eqref{eq:linear relation 2}. Since
$$g(\y)=\left(\prod_{i=1}^s\prod_{k=0}^{k_0-1}\left(1-a_k^{(i)}y_i\right)\right)\wg(\y)$$
and since
$$h(\y)=\wh(\y)+\sum_{k=0}^{k_0-1}\prod_{i=1}^s\frac{a_k^{(i)}}{1-a_k^{(i)}y_i},$$
we have a decomposition
$$
\theta(\y)=\theta_1(\y)+\theta_2(\y),
$$
where
$$\theta_1(\y)\coloneqq\left(\prod_{i=1}^s\prod_{k=0}^{k_0-1}\left(1-a_k^{(i)}y_i\right)\right)\wt(\y)$$
and
$$\theta_2(\y)\coloneqq\left(\sum_{k=0}^{k_0-1}\prod_{i=1}^sa_k^{(i)}\prod_{\substack{k'=0\\k'\neq k}}^{k_0-1}\left(1-a_{k'}^{(i)}y_i\right)\right)\wg(\y).$$
Then we have
$$\bm{\theta}=\bm{\theta}_1+\bm{\theta}_2,$$
where $\bm{\theta}_1$ and $\bm{\theta}_2$ are column vectors given respectively by sorting the values $\theta_1^{(\m+\bn)}(\bbeta)$ and $\theta_2^{(\m+\bn)}(\bbeta)$ in the same ascending lexicographical order of $\m\in{\{0,\ldots,M\}}^s$ applied to $\bm{\theta}$ and $\widetilde{\bm{\theta}}$. In order to prove \eqref{eq:linear relation 2}, it is enough to show that
\begin{equation}\label{eq:theta_1}
\bm{\theta}_1=L\widetilde{\bm{\theta}},\quad L\in\cL_{{(M+1)}^s}(\Qbar)
\end{equation}
and
\begin{equation}\label{eq:theta_2}
\bm{\theta}_2\in W^{{(M+1)}^s}.
\end{equation}
Since
$$\theta_1(\y)=\left(\prod_{i=1}^sP_i(y_i)Q_i(y_i)\right)\wt(\y),$$
we have
\begin{equation}\label{eq:lower}
\theta_1^{(\m+\bn)}(\bbeta)=\sum_{\substack{\bmu=(\mu_1,\ldots,\mu_s),\\0\leq \mu_i\leq m_i\ (1\leq i\leq s)}}\left(\prod_{i=1}^s\binom{m_i+n_i}{n_i\quad m_i-\mu_i\quad\mu_i}p_iQ_i^{(m_i-\mu_i)}(\beta_i)\right)\wt^{(\bmu)}(\bbeta)
\end{equation}
for any $\m=(m_1,\ldots,m_s)\in\N^s$
. Here we note that, if $\m=(m_1,\ldots,m_s)\in{\{0,\ldots,M\}}^s$ and $\bmu=(\mu_1,\ldots,\mu_s)\in{\{0,\ldots,M\}}^s$ satisfy $\mu_i\leq m_i$ $(1\leq i\leq s)$, then $\m$ is equal to or larger than $\bmu$ with respect to any ascending lexicographical order of ${\{0,\ldots,M\}}^s$. Hence the term $(\prod_{i=1}^s\binom{m_i+n_i}{m_i}p_iq_i)\wt^{(\m)}(\bbeta)$ is of the largest index $\m$ 
among those $\bmu$ appearing in the right-hand side of \eqref{eq:lower}, which implies \eqref{eq:theta_1}. 

Next, we define
$$U_i(y_i)\coloneqq\left(1-\beta_i^{-1}y_i\right)^{n_i-1}\in\Qbar[y_i]$$
for each $i\in I_1$ and let
$$V(\y)\coloneqq\left(\sum_{k=0}^{k_0-1}\prod_{i=1}^sa_k^{(i)}\prod_{\substack{k'=0\\k'\neq k}}^{k_0-1}\left(1-a_{k'}^{(i)}y_i\right)\right)\prod_{i\in I_1}U_i(y_i)^{-1}\in\Qbar[\y].$$
Then we have
$$\theta_2(\y)=\left(\prod_{i\in I_1}U_i(y_i)\right)V(\y)\wg(\y).$$
Hence, for any $\m=(m_1,\ldots,m_s)\in\N^s$, we have
\begin{align*}
\theta_2^{(\m+\bn)}(\bbeta)&=\sum_{\substack{\bmu=(\mu_1,\ldots,\mu_s),\\0\leq\mu_i\leq m_i'\ (1\leq i\leq s)}}\left(\prod_{i\in I_1}\binom{m_i+n_i}{n_i-1\quad m_i+1-\mu_i\quad\mu_i}u_i\right)\left(\prod_{i\in I_2}\binom{m_i}{\mu_i}\right)\\
&\qquad\qquad\qquad\qquad\times V^{(\m'-\bmu)}(\bbeta)\wg^{(\bmu)}(\bbeta),
\end{align*}
where 
$$
m_i'\coloneqq
\begin{dcases}
m_i+1, & i\in I_1,\\
m_i, & i\in I_2,
\end{dcases}
$$
$\m'\coloneqq(m_1',\ldots,m_s')$, and $u_i\coloneqq U_i^{(n_i-1)}(y_i)\in\Qbar^\times$ $(i\in I_1)$. This implies \eqref{eq:theta_2} and the theorem is proved.
\end{proof}

Next we show Lemmas \ref{lem:alg relation} and \ref{lem:alg relation2} below, which assert that the functions defined respectively by (\ref{eq:g h theta}) and (\ref{eq:g_i h_i}) satisfy `invertible' algebraic relations.

\begin{lem}\label{lem:alg relation}
Let $M$ be a nonnegative integer. Let $\widehat{\bm{\theta}}(\y)$ and $\widehat{\bm{h}}(\y)$ be column vectors given by sorting the functions $\theta^{(\m)}(\y)/g(\y)$ and $h^{(\m)}(\y)$ in an ascending lexicographical order of $\m\in{\{0,\ldots,M\}}^s$, respectively. Then there exists $A\in\cL^\ast_{{(M+1)}^s}(\Z[\{g_i^{(m)}(y_i)/g_i(y_i)\mid 1\leq i\leq s,\ 1\leq m\leq M\}])$ such that
$$\widehat{\bm{\theta}}(\y)=A\,\widehat{\bm{h}}(\y).$$
\end{lem}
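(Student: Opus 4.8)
The plan is to apply the multivariate Leibniz rule to the product $\theta(\y)=g(\y)h(\y)$ and then to divide by $g(\y)$, exploiting the fact that $g(\y)=\prod_{i=1}^s g_i(y_i)$ factors into functions of one variable each. Writing $\mathcal{R}\coloneqq\Z[\{g_i^{(m)}(y_i)/g_i(y_i)\mid 1\leq i\leq s,\ 1\leq m\leq M\}]$ for the coefficient ring, I first record that for every $\m=(m_1,\ldots,m_s)\in\{0,\ldots,M\}^s$ the Leibniz rule gives
$$
\theta^{(\m)}(\y)=\sum_{\substack{\bmu=(\mu_1,\ldots,\mu_s),\\ 0\leq\mu_i\leq m_i\ (1\leq i\leq s)}}\left(\prod_{i=1}^s\binom{m_i}{\mu_i}\right)g^{(\m-\bmu)}(\y)\,h^{(\bmu)}(\y).
$$
Dividing through by $g(\y)$ and using $g^{(\bn)}(\y)/g(\y)=\prod_{i=1}^s g_i^{(n_i)}(y_i)/g_i(y_i)$ for any $\bn=(n_1,\ldots,n_s)$, which holds because differentiating $\prod_i g_i(y_i)$ in $y_i$ only affects the $i$-th factor, yields
$$
\frac{\theta^{(\m)}(\y)}{g(\y)}=\sum_{\substack{\bmu=(\mu_1,\ldots,\mu_s),\\ 0\leq\mu_i\leq m_i\ (1\leq i\leq s)}}\left(\prod_{i=1}^s\binom{m_i}{\mu_i}\frac{g_i^{(m_i-\mu_i)}(y_i)}{g_i(y_i)}\right)h^{(\bmu)}(\y).
$$

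Reading off the $(\m,\bmu)$-entry of $A$ as the displayed coefficient of $h^{(\bmu)}(\y)$, I would next check that $A$ has entries in $\mathcal{R}$: each factor $g_i^{(m_i-\mu_i)}(y_i)/g_i(y_i)$ equals $1$ when $m_i-\mu_i=0$ and is a generator of $\mathcal{R}$ when $1\leq m_i-\mu_i\leq M$, while the binomial coefficients are integers, so every coefficient lies in $\mathcal{R}$. This already establishes the relation $\widehat{\bm{\theta}}(\y)=A\,\widehat{\bm{h}}(\y)$ with $A$ a matrix over $\mathcal{R}$.

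It then remains to verify the triangular shape and the unit diagonal. The only surviving terms satisfy $\bmu\leq\m$ componentwise, and exactly as noted in the proof of Theorem~\ref{thm:linear relation}, the conditions $0\leq\mu_i\leq m_i$ $(1\leq i\leq s)$ force $\bmu$ to be equal to or smaller than $\m$ in any ascending lexicographical order; hence $A$ is lower triangular. The diagonal term is the one with $\bmu=\m$, whose coefficient is $\prod_{i=1}^s\binom{m_i}{m_i}\,g_i^{(0)}(y_i)/g_i(y_i)=1$, so $A\in\cL^\ast_{(M+1)^s}(\mathcal{R})$, as required. There is no real obstacle in this argument: it is the multivariate Leibniz rule combined with the product structure of $g(\y)$, and the only point needing the slightest care—already isolated in Theorem~\ref{thm:linear relation}—is the compatibility between the componentwise partial order and the lexicographic total order that guarantees lower-triangularity.
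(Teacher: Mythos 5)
Your proof is correct and takes essentially the same route as the paper: the multivariate Leibniz rule applied to $\theta(\y)=g(\y)h(\y)$, division by $g(\y)$ using the product structure $g(\y)=\prod_{i=1}^s g_i(y_i)$, and the observation (borrowed from the proof of Theorem~\ref{thm:linear relation}) that componentwise domination $\bmu\leq\m$ implies $\bmu$ precedes or equals $\m$ in the lexicographical order, giving lower-triangularity with unit diagonal. Nothing is missing.
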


\begin{proof}
From \eqref{eq:g h theta}, we obtain
$$
\frac{\theta^{(\m)}(\y)}{g(\y)}=\sum_{\substack{\bmu=(\mu_1,\ldots,\mu_s),\\0\leq\mu_i\leq m_i\ (1\leq i\leq s)}}\left(\prod_{i=1}^s\binom{m_i}{\mu_i}\frac{g_i^{(m_i-\mu_i)}(y_i)}{g_i(y_i)}\right)h^{(\bmu)}(\y)
$$
for any $\m=(m_1,\ldots,m_s)\in{\{0,\ldots,M\}}^s$. Hence, in a similar way to the argument stated immediately after \eqref{eq:lower} in the proof of Theorem~\ref{thm:linear relation}, we obtain the equation in the lemma. 
\end{proof}

\begin{lem}\label{lem:alg relation2}
Let $M$ be a positive integer. Then, for each $i$ $(1\leq i\leq s)$, there exist $B_i\in\cL^\ast_{M+1}(\Z[\{h_i^{(m)}(y_i)\mid 0\leq m\leq M-1\}])$ and $C_i\in\cL^\ast_{M+1}(\Z[\{g_i^{(m)}(y_i)/g_i(y_i)\mid 1\leq m\leq M\}])$ such that
$$
\left(
\begin{array}{c}
-g_i'(y_i)/g_i(y_i) \\
-g_i''(y_i)/g_i(y_i) \\
\vdots \\
-g_i^{(M+1)}(y_i)/g_i(y_i)
\end{array}
\right)=B_i\left(
\begin{array}{c}
h_i(y_i) \\
h_i'(y_i) \\
\vdots \\
h_i^{(M)}(y_i)
\end{array}
\right)
$$
and
$$
\left(
\begin{array}{c}
h_i(y_i) \\
h_i'(y_i) \\
\vdots \\
h_i^{(M)}(y_i)
\end{array}
\right)=C_i\left(
\begin{array}{c}
-g_i'(y_i)/g_i(y_i) \\
-g_i''(y_i)/g_i(y_i) \\
\vdots \\
-g_i^{(M+1)}(y_i)/g_i(y_i)
\end{array}
\right).
$$
\end{lem}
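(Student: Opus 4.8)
The plan is to derive everything from the single analytic identity obtained by taking the logarithmic derivative of the infinite product. Since $\sum_{k\ge0}|a_k^{(i)}|<\infty$ makes $g_i$ entire and permits term-by-term differentiation, one has, as meromorphic functions,
$$\frac{g_i'(y_i)}{g_i(y_i)}=\sum_{k=0}^\infty\frac{-a_k^{(i)}}{1-a_k^{(i)}y_i}=-h_i(y_i),$$
equivalently the first-order equation $g_i'(y_i)=-h_i(y_i)g_i(y_i)$. This is precisely the $m=0$ case of both asserted relations, the top rows of $B_i$ and $C_i$ being $(1,0,\ldots,0)$.

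Next I would differentiate this equation repeatedly. Applying the Leibniz rule to $g_i^{(m+1)}=(-h_ig_i)^{(m)}$ and dividing by $g_i$ yields the recursion
$$-\frac{g_i^{(m+1)}(y_i)}{g_i(y_i)}=\sum_{j=0}^m\binom{m}{j}h_i^{(j)}(y_i)\,\frac{g_i^{(m-j)}(y_i)}{g_i(y_i)}\qquad(m\ge0),$$
with the convention $g_i^{(0)}/g_i=1$. This single identity carries all the combinatorial content: isolating the $j=m$ term, whose factor $g_i^{(0)}/g_i$ equals $1$, produces $h_i^{(m)}$ with coefficient $1$, while the remaining terms involve only $h_i^{(j)}$ with $j\le m-1$ and the quotients $g_i^{(m-j)}/g_i$ with $1\le m-j\le m$.

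From here both matrices fall out by induction on $m$. Reading the recursion as a formula for $-g_i^{(m+1)}/g_i$ and substituting, for each $\nu\ge1$, the quotient $g_i^{(\nu)}/g_i=-\bigl(-g_i^{(\nu)}/g_i\bigr)$ by its expression coming from the induction hypothesis, I obtain $-g_i^{(m+1)}/g_i=h_i^{(m)}+P_m$ with $P_m$ a $\Z$-polynomial in $h_i,\ldots,h_i^{(m-1)}$; this yields $B_i\in\cL^\ast_{M+1}(\Z[\{h_i^{(m)}\mid 0\le m\le M-1\}])$, lower triangular with unit diagonal because the top-order derivative $h_i^{(m)}$ occurs only on the diagonal. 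Reading the same recursion instead as a formula for $h_i^{(m)}$ in terms of $-g_i^{(m+1)}/g_i$ and lower $h_i^{(j)}$, and eliminating the latter recursively, expresses $h_i^{(m)}$ as $-g_i^{(m+1)}/g_i$ plus a $\Z$-polynomial in the quotients $g_i^{(\nu)}/g_i$ with $1\le\nu\le m$; this yields $C_i\in\cL^\ast_{M+1}(\Z[\{g_i^{(m)}/g_i\mid 1\le m\le M\}])$. Since $L=B_ih$ and $h=C_iL$ for the two column vectors $L=(-g_i^{(m+1)}/g_i)_{m}$ and $h=(h_i^{(m)})_{m}$, the matrices are mutually inverse, $C_i=B_i^{-1}$, consistently within $\cL^\ast_{M+1}$.

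I expect no serious obstacle here: the argument is essentially the exponential-formula bookkeeping attached to $g_i'=-h_ig_i$. The only points demanding care are, first, the single analytic justification for differentiating the infinite product term by term to secure the base identity, and second, tracking the index ranges so that the entries of $B_i$ use derivatives $h_i^{(m)}$ only up to order $M-1$ and those of $C_i$ use quotients $g_i^{(m)}/g_i$ only up to order $M$, exactly as stated, while both matrices stay lower triangular with $1$'s on the diagonal. The substantive content of the lemma is precisely that $B_i^{-1}$, a priori rational in the $h_i^{(m)}$, admits a polynomial presentation in the logarithmic-derivative quotients of $g_i$, and this is exactly what the recursion delivers.
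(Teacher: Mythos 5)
Your proposal is correct and takes essentially the same route as the paper: both start from $g_i'(y_i)=-g_i(y_i)h_i(y_i)$, apply the Leibniz rule to get the recursion $-g_i^{(m+1)}/g_i=\sum_{\mu=0}^m\binom{m}{\mu}h_i^{(\mu)}\,g_i^{(m-\mu)}/g_i$, and use induction to produce lower triangular unipotent matrices. The only cosmetic difference is in the $C_i$ direction, where the paper runs a separate induction on the derivatives of $1/g_i(y_i)$ (via auxiliary polynomials $Q_m\in\Z[Y_1,\ldots,Y_m]$) and then applies Leibniz to $h_i=-g_i'\cdot(1/g_i)$, whereas you invert the same recursion directly by recursively eliminating the lower-order $h_i^{(j)}$; since each term of the recursion carries an explicit factor $g_i^{(m-j)}/g_i$, your substitution indeed lands in the required linear-combination form, so both arguments deliver the same matrices.
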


\begin{proof}
Since $g_i'(y_i)=-g_i(y_i)h_i(y_i)$, we see inductively that, for any $m\geq0$,
\begin{equation}\label{eq:P_m}
g_i^{(m)}(y_i)=g_i(y_i)P_m\!\left(h_i(y_i),\ldots,h_i^{(m-1)}(y_i)\right)\qquad(1\leq i\leq s),
\end{equation}
where $P_0\coloneqq1$ and $P_m(X_0,\ldots,X_{m-1})\in\Z[X_0,\ldots,X_{m-1}]$ $(m\geq1)$. Then again from the equation $-g_i'(y_i)=g_i(y_i)h_i(y_i)$, using the Leibniz rule and \eqref{eq:P_m}, we have
\begin{align}\label{eq:g_i and h_i}
-\frac{g_i^{(m+1)}(y_i)}{g_i(y_i)}&=\frac{1}{g_i(y_i)}\sum_{\mu=0}^m\binom{m}{\mu}g_i^{(m-\mu)}(y_i)h_i^{(\mu)}(y_i)\nonumber\\
&=\sum_{\mu=0}^m\binom{m}{\mu}P_{m-\mu}\!\left(h_i(y_i),\ldots,h_i^{(m-\mu-1)}(y_i)\right)h_i^{(\mu)}(y_i)\qquad(1\leq i\leq s)
\end{align}
for any $m\geq0$. On the other hand, we see inductively that, for any $m\geq0$,
$$
\frac{d^m}{dy_i^m}\left(\frac{1}{g_i(y_i)}\right)=\frac{1}{g_i(y_i)}Q_m\!\left(-\frac{g_i'(y_i)}{g_i(y_i)},\ldots,-\frac{g_i^{(m)}(y_i)}{g_i(y_i)}\right)\qquad(1\leq i\leq s),
$$
where $Q_0\coloneqq1$ and $Q_m(Y_1,\ldots,Y_m)\in\Z[Y_1,\ldots,Y_m]$ $(m\geq1)$. Since $h_i(y_i)=-g_i'(y_i)/g_i(y_i)$, we have
\begin{align}\label{eq:h_i and g_i}
h_i^{(m)}(y_i)&=-\sum_{\mu=0}^m\binom{m}{\mu}g_i^{(\mu+1)}(y_i)\frac{d^{m-\mu}}{dy_i^{m-\mu}}\left(\frac{1}{g_i(y_i)}\right)\nonumber\\
&=\sum_{\mu=0}^m\binom{m}{\mu}Q_{m-\mu}\!\left(-\frac{g_i'(y_i)}{g_i(y_i)},\ldots,-\frac{g_i^{(m-\mu)}(y_i)}{g_i(y_i)}\right)\left(-\frac{g_i^{(\mu+1)}(y_i)}{g_i(y_i)}\right)\nonumber\\
&\qquad\qquad\qquad\qquad\qquad\qquad\qquad\qquad\qquad\qquad\qquad(1\leq i\leq s)
\end{align}
for any $m\geq0$. Hence, letting
$$B(X_0,\ldots,X_{M-1})\coloneqq\left(
\begin{array}{cccc}
1&&&\\
P_1(X_0)&1&&\hsymb{0}\\
\vdots&\ddots&\quad\ddots&\\
P_M(X_0,\ldots,X_{M-1})&\cdots&\binom{M}{M-1}P_1(X_0)&1
\end{array}
\right),$$
$$B_i\coloneqq B(h_i(y_i),\ldots,h_i^{(M-1)}(y_i))\qquad(1\leq i\leq s),$$
$$C(Y_1,\ldots,Y_M)\coloneqq\left(
\begin{array}{cccc}
1&&&\\
Q_1(Y_1)&1&&\hsymb{0}\\
\vdots&\ddots&\quad\ddots&\\
Q_M(Y_1,\ldots,Y_M)&\cdots&\binom{M}{M-1}Q_1(Y_1)&1
\end{array}
\right),$$
and
$$C_i\coloneqq C\left(-\frac{g_i'(y_i)}{g_i(y_i)},\ldots,-\frac{g_i^{(M)}(y_i)}{g_i(y_i)}\right)\qquad(1\leq i\leq s),$$
we obtain the equations in the lemma from \eqref{eq:g_i and h_i} and \eqref{eq:h_i and g_i}.
\end{proof}

\subsection{Proof of Theorem \ref{thm:main2}}\label{sec:2.2}

Using Theorem~\ref{thm:linear relation} together with Lemmas \ref{lem:alg relation} and \ref{lem:alg relation2}, we deduce Theorem~\ref{thm:main2} from Theorem~\ref{thm:main3}. 

\begin{proof}[Proof of Theorem \ref{thm:main2}]
Let $\beta_0\coloneqq0$ and let $\beta_1,\ldots,\beta_J$ be any nonzero distinct algebraic numbers. For the simplicity we denote $N_{i,j}\coloneqq N_{i,\beta_j}$ $(1\leq i\leq s,\ 0\leq j\leq J)$. For each $\bj=(j_1,\ldots,j_s)\in{\{0,\ldots,J\}}^s$, let $\bbeta_{\bj}\coloneqq(\beta_{j_1},\ldots,\beta_{j_s})$ and $\bN_{\bj}\coloneqq(N_{1,j_1},\ldots,N_{s,j_s})$. In order to prove Theorem \ref{thm:main2}, it is enough to prove that, for any sufficiently large nonnegative integer $M$, the finite set
\begin{align*}
S\coloneqq&\left\{G_i^{(m)}(0)\relmiddle|1\leq i\leq s,\ 1\leq m\leq M+1\right\}\\
&{\textstyle\bigcup}\left\{G_i^{(m+N_{i,j})}(\beta_j)\relmiddle|1\leq i\leq s,\ 1\leq j\leq J,\ 0\leq m\leq M+1\right\}\\
&{\textstyle\bigcup}\left\{\Theta^{(\m+\bN_{\bj})}(\bbeta_{\bj})\relmiddle|\bj\in{\{0,\ldots,J\}}^s,\ \m\in{\{0,\ldots,M\}}^s\right\}
\end{align*}
is algebraically independent. 
(Here we take into account all the symmetric points with respect to every axis to treat $i$ and $j$ as independent indices.)
Since $R_k\to\infty$ as $k$ tends to infinity, there exists a sufficiently large integer $k_0$ such that $1-a_i^{R_k}\beta_j\neq0$ $(1\leq i\leq s,\ 1\leq j\leq J)$ for all $k\geq k_0$. Let $\wR_k\coloneqq R_{k+k_0}$ $(k\geq0)$. 
Clearly, the linear recurrence $\{\wR_k\}_{k\geq0}$ also satisfies the condition (ND) stated in Section \ref{sec:1}. Let $\wG_i(y_i)$, $\wH_i(y_i)$ $(1\leq i\leq s)$ and $\wG(\y)$, 
$\wH(\y)$, $\wT(\y)$ be the functions given respectively by \eqref{eq:G_i H_i} and \eqref{eq:G H Theta} with $\{\wR_k\}_{k\geq0}$ in place of $\{R_k\}_{k\geq0}$. 
Let 
\begin{align*}
\Lambda\coloneqq&\left\{\wG_i(\beta_j)\relmiddle|1\leq i\leq s,\ 1\leq j\leq J\right\},\\
T\coloneqq&\left\{\wG_i^{(m)}(0)\relmiddle|1\leq i\leq s,\ 1\leq m\leq M+1\right\}\\
&\;{\textstyle\bigcup}\left\{\wG_i^{(m)}(\beta_j)\relmiddle|1\leq i\leq s,\ 1\leq j\leq J,\ 0\leq m\leq M+1\right\}\\
&\;{\textstyle\bigcup}\left\{\wT^{(\m)}(\bbeta_{\bj})\relmiddle|\bj\in{\{0,\ldots,J\}}^s,\ \m\in{\{0,\ldots,M\}}^s\right\}\\
=&\;\Lambda\,{\textstyle\bigcup}\left\{\wG_i^{(m)}(\beta_j)\relmiddle|1\leq i\leq s,\ 0\leq j\leq J,\ 1\leq m\leq M+1\right\}\\
&\quad\,{\textstyle\bigcup}\left\{\wT^{(\m)}(\bbeta_{\bj})\relmiddle|\bj\in{\{0,\ldots,J\}}^s,\ \m\in{\{0,\ldots,M\}}^s\right\},\\
U\coloneqq&\;\Lambda\,{\textstyle\bigcup}\left\{\wG_i^{(m)}(\beta_j)\relmiddle|1\leq i\leq s,\ 0\leq j\leq J,\ 1\leq m\leq M+1\right\}\\
&\quad\,{\textstyle\bigcup}\left\{\wH^{(\m)}(\bbeta_{\bj})\relmiddle|\bj\in{\{0,\ldots,J\}}^s,\ \m\in{\{0,\ldots,M\}}^s\right\},
\end{align*}
and
\begin{align*}
V\coloneqq
\Lambda\,
&{\textstyle\bigcup}\left\{\wH_i^{(m)}(\beta_j)\relmiddle|1\leq i\leq s,\ 0\leq j\leq J,\ 0\leq m\leq M\right\}\\
&{\textstyle\bigcup}\left\{\wH^{(\m)}(\bbeta_{\bj})\relmiddle|\bj\in{\{0,\ldots,J\}}^s,\ \m\in{\{0,\ldots,M\}}^s\right\}.
\end{align*}
By Theorem \ref{thm:linear relation}, there exist $L_{i,j}\in\cL_{M+2}(\Qbar)$ $(1\leq i\leq s,\ 0\leq j\leq J)$ and $L_{\bj}\in\cL_{{(M+1)}^s}(\Qbar)$ $(\bj\in{\{0,\ldots,J\}}^s)$ such that
$$
\left(
\begin{array}{c}
G_i^{(N_{i,j})}(\beta_j) \\
G_i^{(1+N_{i,j})}(\beta_j) \\
\vdots \\
G_i^{(M+1+N_{i,j})}(\beta_j)
\end{array}
\right)=L_{i,j}\left(
\begin{array}{c}
\wG_i(\beta_j) \\
\wG_i'(\beta_j) \\
\vdots \\
\wG_i^{(M+1)}(\beta_j)
\end{array}
\right)
$$
and
\begin{equation}\label{eq:modulo W}
\bm{\Theta}_{\bj}\equiv L_{\bj}\widetilde{\bm{\Theta}}_{\bj}\pmod{W_{\bj}^{{(M+1)}^s}},
\end{equation}
where $\bm{\Theta}_{\bj}$ and $\widetilde{\bm{\Theta}}_{\bj}$ are column vectors given by sorting the values $\Theta^{(\m+\bN_{\bj})}(\bbeta_{\bj})$ and $\wT^{(\m)}(\bbeta_{\bj})$ in an ascending lexicographical order of $\m\in{\{0,\ldots,M\}}^s$, respectively, and $W_{\bj}$ is the $\Qbar$-vector space generated by $\{\wG^{(\m)}(\bbeta_{\bj})\mid\m\in{\{0,\ldots,M+1\}}^s\}$. Hence, noting that $G_i(0)=\wG_i(0)=1$ $(1\leq i\leq s)$, we see that $\Qbar[S]\subset\Qbar[T]$. Conversely by Remark~\ref{rem:vector space W} the $ \Qbar$-vector space $W_{\bj}$ is generated also by $\{G^{(\m+\bN_{\bj})}(\bbeta_{\bj})\mid\m\in{\{0,\ldots,M+1\}}^s\}$. Then by \eqref{eq:modulo W} we have $\widetilde{\bm{\Theta}}_{\bj}\equiv L_{\bj}^{-1}\bm{\Theta}_{\bj}\pmod{W_{\bj}^{{(M+1)}^s}}$ and hence $\Qbar[T]\subset\Qbar[S]$. Therefore $\Qbar[S]=\Qbar[T]$.

On the other hand, 
we have $\Q(T)=\Q(U)$ or more precisely,  noting that $\wG_i(\beta_j)\neq0$ $(1\leq i\leq s,\ 1\leq j\leq J)$ and  letting 
$\Lambda^{-1}\coloneqq\{1/\wG_i(\beta_j)\;|\;1\leq i\leq s,\ 1\leq j\leq J\}$, 
we have $\Q[T,\Lambda^{-1}]=\Q[U,\Lambda^{-1}]$
since 
\begin{align*}
&\Z\left[\left\{\frac{\wG_i^{(m)}(y_i)}{\wG_i(y_i)}\relmiddle|1\leq i\leq s,\ 1\leq m\leq M\right\}{\textstyle\bigcup}\left\{\frac{\wT^{(\m)}(\y)}{\wG(\y)}\relmiddle|\m\in{\{0,\ldots,M\}}^s\right\}\right]\\
=\ &\Z\left[\left\{\frac{\wG_i^{(m)}(y_i)}{\wG_i(y_i)}\relmiddle|1\leq i\leq s,\ 1\leq m\leq M\right\}{\textstyle\bigcup}\left\{\wH^{(\m)}(\y)\relmiddle|\m\in{\{0,\ldots,M\}}^s\right\}\right]
\end{align*}
by Lemma~\ref{lem:alg relation}. 
Moreover, using Lemma~\ref{lem:alg relation2}, we see that $\Q(U)=\Q(V)$ or more precisely $\Q[U,\Lambda^{-1}]=\Q[V,\Lambda^{-1}]$. 
From the facts that $\Qbar[S]=\Qbar[T]$, $\Q(T)=\Q(U)=\Q(V)$, and $\#S=\#T=\#U=\#V$, the algebraic independency of $S$ is equivalent to that of $V$. This concludes the proof since Theorem~\ref{thm:main3} for the linear recurrence $\{\wR_k\}_{k\geq0}$ asserts that $V$ is algebraically independent.
\end{proof}

\section{Proof of Theorem \ref{thm:main3}}\label{sec:3}
In this section we explain the lemmas necessary for proving Theorem \ref{thm:main3} before its proof.

\subsection{Preparation for the proof of Theorem \ref{thm:main3}}
\label{sec:3.1}
We denote by $F(z_1,\ldots,z_n)$ and by $F[[z_1,\ldots,z_n]]$ the field of rational functions and the ring of formal power series in variables $z_1,\ldots,z_n$ with coefficients in a field $F$, respectively.
The following lemma is used both in this section and in the next section. 

\begin{lem}[Nishioka \cite{Nishioka1996}]\label{lem:Puiseux}
Let $C$ be a field and $F$ a subfield of $C$. If
$$f(\z)\in C[[z_1,\ldots,z_n]]\cap F(z_1,\ldots,z_n),$$
then there exist polynomials $A(\z),B(\z)\in F[z_1,\ldots,z_n]$ such that
$$f(\z)=A(\z)/B(\z),\quad B(\0)\neq0.$$
\end{lem}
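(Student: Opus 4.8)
The plan is to show that the reduced representation of $f$ over $F$ already has the required shape. Since $F[z_1,\dots,z_n]$ is a unique factorization domain, I would first write $f=A/B$ with $A,B\in F[z_1,\dots,z_n]$ coprime; it then suffices to prove $B(\0)\neq0$, for then this very pair $(A,B)$ is an admissible choice. The cases $A=0$ and $B$ constant are trivial, so assume $A,B$ nonconstant and coprime. A standard fact that I would invoke at the outset is that coprimality of polynomials is insensitive to field extension (most cleanly seen by passing to algebraic closures: a common factor over $\overline{C}$ would, together with its conjugates, produce one over $F$). Hence $A,B$ remain coprime in $C[z_1,\dots,z_n]$, and I may argue over $C$ while keeping the final polynomials over $F$.

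Next I would record the one genuinely algebraic input. Viewing $F(\z)\subset C(\z)$ inside $\operatorname{Frac}(C[[\z]])$, the identity $f=A/B$ together with $f\in C[[\z]]$ gives $Bf=A$ in $C[[\z]]$, i.e. $B\mid A$ in the power series ring. The key step is to transport this divisibility back to the localization $C[\z]_{\mathfrak m}$ at $\mathfrak m=(z_1,\dots,z_n)$, whose completion is exactly $C[[\z]]$. Because the completion map $C[\z]_{\mathfrak m}\to C[[\z]]$ is faithfully flat, contraction of ideals yields $I\,C[[\z]]\cap C[\z]_{\mathfrak m}=I$ for every ideal $I$; applying this to $I=B\,C[\z]_{\mathfrak m}$ and using $A\in B\,C[[\z]]\cap C[\z]_{\mathfrak m}$ gives $B\mid A$ already in $C[\z]_{\mathfrak m}$. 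Writing this as $AD=BE$ with $D,E\in C[z_1,\dots,z_n]$ and $D(\0)\neq0$, coprimality of $A,B$ forces $B\mid D$; writing $D=BB'$ we get $D(\0)=B(\0)B'(\0)$, and since $D(\0)\neq0$ we conclude $B(\0)\neq0$, as required.

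The step I expect to be the main obstacle is precisely this passage between $C[\z]$ and $C[[\z]]$, equivalently the identity $C(\z)\cap C[[\z]]=C[\z]_{\mathfrak m}$. The subtlety is twofold. First, one cannot compare factorizations directly: a polynomial irreducible over $C$ may become reducible in $C[[\z]]$ (a nodal cubic, for instance, splits into its two analytic branches), so the argument must be phrased in terms of coprimality—equivalently the height of $(A,B)$, which the faithfully flat map preserves—rather than in terms of individual irreducible factors. Second, no analytic shortcut is available: although a genuine pole of $A/B$ at a zero of $B$ arbitrarily near the origin would instantly contradict holomorphy, $f$ is only a \emph{formal} power series and need not converge, so the contradiction has to be produced algebraically. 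An alternative, more self-contained route I would keep in reserve is induction on $n$: the case $n=1$ is immediate, since $F[z_1]$ is a principal ideal domain and a reduced fraction whose denominator vanishes at $0$ has strictly negative order, contradicting $f\in C[[z_1]]$; the inductive step treats $z_n$ as the main variable over $E=F(z_1,\dots,z_{n-1})$, using Gauss's lemma to preserve coprimality in the PID $E[z_n]$, at the cost of more delicate bookkeeping to control the behaviour at the origin after specializing $z_n=0$.
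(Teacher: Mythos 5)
Your proposal is correct, but there is nothing in the paper to compare it against: the lemma is quoted from Nishioka \cite{Nishioka1996} without proof, so the comparison is with the literature rather than with an in-paper argument. Your route --- take the reduced fraction $f=A/B$ over $F$, observe $A=Bf\in B\,C[[z_1,\ldots,z_n]]$, contract along the faithfully flat completion map $C[z_1,\ldots,z_n]_{\mathfrak m}\to C[[z_1,\ldots,z_n]]$ to get $B\mid A$ already in the localization, and then use coprimality of $A,B$ in $C[z_1,\ldots,z_n]$ to force $B(\0)\neq 0$ --- is sound, and the two external inputs (the contraction property $I\,\widehat R\cap R=I$ for the completion of a Noetherian local ring, and invariance of coprimality of polynomials under extension of the coefficient field) are indeed standard. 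One caveat: your parenthetical justification of the second input does not work as written, because a common factor of $A$ and $B$ in $\overline{C}[z_1,\ldots,z_n]$ need not be algebraic over $F$, so it has no ``conjugates'' over $F$ to multiply together. To repair it, either add the geometric fact that irreducible polynomials over the algebraically closed field $\overline{F}$ stay irreducible over the larger algebraically closed field $\overline{C}$ (so the gcd of $A,B$ may be computed over $\overline{F}$, where Galois descent to $F$ applies in characteristic $0$), or, more in the spirit of your main step, note that $F[z_1,\ldots,z_n]\to C[z_1,\ldots,z_n]$ is flat and that coprimality in these UFDs is the ideal identity $(A)\cap(B)=(AB)$, which flat base change preserves over arbitrary fields. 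With that repair your argument is complete and works uniformly in any characteristic; it is also worth noting that both applications of the lemma in this paper have $F=C$ (namely $F=C=\Qbar$ and $F=C=\Cbar$), where the coprimality-descent step is vacuous and only the completion argument is needed.
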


In what follows, we state the conditions required for Mahler functions. Let $\Omega=(\omega_{ij})$ be an $n\times n$ matrix with nonnegative integer entries. 
Then the maximum $\rho$ of the absolute values of the eigenvalues of $\Omega$ is itself an eigenvalue of $\Omega$ (cf. Gantmacher \cite[p. 80, Theorem 3]{Gant}). 
We define a multiplicative transformation $\Omega\colon\C^n\to\C^n$ by
\begin{equation}\label{eq:Omegaz}
\Omega\z\coloneqq \left(\prod_{j=1}^nz_j^{\omega_{1j}},\prod_{j=1}^nz_j^{\omega_{2j}},\ldots,\prod_{j=1}^nz_j^{\omega_{nj}}\right)
\end{equation}
for any $\z=(z_1,\ldots,z_n)\in\C^n$. Then $\Omega(\Omega^k\z)=\Omega^{k+1}\z$ $(k=0,1,2,\ldots)$. 

Let $\ba=(\alpha_1,\ldots,\alpha_n)$ be a point with $\alpha_1,\ldots,\alpha_n$ nonzero algebraic numbers. We assume the following four conditions on $\Omega$ and $\ba$.
\begin{enumerate}
\renewcommand{\labelenumi}{(\Roman{enumi})}
\item $\Omega$ is nonsingular and none of its eigenvalues is a root of unity, so that $\rho>1$.
\item Every entry of the matrix $\Omega^k$ is $O(\rho^k)$ as $k$ tends to infinity.
\item If we put $\Omega^k\ba\eqqcolon(\alpha_1^{(k)},\ldots ,\alpha_n^{(k)})$, then
$$
\log|\alpha_i^{(k)}|\leq-c\rho^k\quad(1\leq i\leq n) 
$$
for all sufficiently large $k$, where $c$ is a positive constant.
\item For any nonzero $f(\z)\in\C[[z_1,\ldots,z_n]]$ which converges in some neighborhood of the origin of $\C^n$, there are infinitely many positive integers $k$ such that $f(\Omega^k\ba)\neq0$.
\end{enumerate}

The following two lemmas are general criteria on the Mahler functions of several variables. 

\begin{lem}[Kubota \cite{Kubota}, see also Nishioka \cite{Nishioka}]\label{lem:indep_values}
Let $K$ be an algebraic number field. Suppose that $f_1(\z),\ldots,f_m(\z)\in K[[z_1,\ldots,z_n]]$ converge in an $n$-polydisc $U$ around the origin and satisfy the functional equations
$$f_i(\z)=a_i(\z)f_i(\Omega\z)+b_i(\z)\quad(1\leq i\leq m),$$
where $a_i(\z),b_i(\z)\in K(z_1,\ldots,z_n)$ and $a_i(\z)$ $(1\leq i\leq m)$ are defined and nonzero at the origin. Assume that the $n\times n$ matrix $\Omega$ and a point $\ba\in U$ whose components are nonzero algebraic numbers satisfy the conditions {\rm (I)--(IV)} and that $a_i(\z)$ $(1\leq i\leq m)$ are defined and nonzero at $\Omega^k\ba$ for all $k\geq0$. Then, if the values $f_1(\ba),\ldots,f_m(\ba)$ are algebraically dependent, then the functions $f_1(\z),\ldots,f_m(\z)$ are algebraically dependent over $K(z_1,\ldots,z_n)$.
\end{lem}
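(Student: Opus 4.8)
This is Kubota's algebraic independence criterion for Mahler functions, so I would prove its contrapositive: assuming $f_1(\z),\ldots,f_m(\z)$ are algebraically independent over $K(\z)$, I would show that the values $f_1(\ba),\ldots,f_m(\ba)$ are algebraically independent over $\Q$. Suppose not. Choosing a transcendence basis among the values, we may assume (after relabelling) that $f_1(\ba),\ldots,f_r(\ba)$ with $r<m$ are algebraically independent over $\Q$ while the remaining values are algebraic over $\Q(f_1(\ba),\ldots,f_r(\ba))$ of bounded degree; the whole plan is to turn this bounded degree into a contradiction with the functional independence of the $f_i$ via the standard auxiliary-function scheme of Mahler's method.

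First, the auxiliary function. For a large parameter $N$ I would invoke Siegel's lemma to construct a nonzero polynomial $P\in\mathcal{O}_K[X_1,\ldots,X_m]$ of total degree at most $N$ and controlled logarithmic height such that $E(\z):=P(f_1(\z),\ldots,f_m(\z))\in K[[z_1,\ldots,z_n]]$ vanishes at the origin to a high order $T$; balancing the number of free coefficients of $P$ against the number of low-order Taylor coefficients to annihilate produces $T$ large relative to $N$. The algebraic independence of $f_1,\ldots,f_m$ over $K(\z)$ is exactly what guarantees $E\not\equiv0$, and condition (IV) then guarantees $E(\Omega^k\ba)\neq0$ for infinitely many $k$.

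Next comes the two-sided estimate of $E(\Omega^k\ba)$ for such $k$. On the analytic side, since $E$ vanishes to order at least $T$ at $\0$ and, by condition (III), $\log\max_i|\alpha_i^{(k)}|\leq-c\rho^k$, a Schwarz-type estimate gives an upper bound of the shape $\log|E(\Omega^k\ba)|\leq-c'T\rho^k+O(\rho^k)$. On the arithmetic side, iterating the functional equations $f_i(\z)=a_i(\z)f_i(\Omega\z)+b_i(\z)$ writes each $f_i(\Omega^k\ba)$ as an affine-linear expression in $f_i(\ba)$ whose coefficients are built from the algebraic numbers $a_i(\Omega^j\ba),b_i(\Omega^j\ba)$ $(0\leq j<k)$; hence $E(\Omega^k\ba)$ lies in $\Q(f_1(\ba),\ldots,f_m(\ba))$, which by our assumption is algebraic of bounded degree over the transcendence basis $f_1(\ba),\ldots,f_r(\ba)$. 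Conditions (I)--(II) (nonsingularity, no eigenvalue a root of unity, and the $O(\rho^k)$ growth of the entries of $\Omega^k$) bound the degree and height of $E(\Omega^k\ba)$ by quantities growing like $\rho^k$ with an $N$-dependent constant, and a Liouville-type (fundamental) inequality then yields a lower bound $\log|E(\Omega^k\ba)|\geq-C\rho^k$ with $C=C(N)$. Comparing the two bounds and separating the two exponential rates by choosing $N$ (and the index $k$) appropriately forces $E(\Omega^k\ba)=0$, contradicting condition (IV).

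The main obstacle is this final comparison. Both bounds carry the common factor $\rho^k$, so the contradiction is available only if the vanishing order $T$ genuinely outgrows the arithmetic cost coming from the degree $N$, the height of $P$, and the growth of the height of $E(\Omega^k\ba)$ along the orbit $\{\Omega^k\ba\}_k$. Making this precise is where conditions (I)--(III) must be used with care, and where the assumed algebraic dependence of the values is essential: it is what keeps $E(\Omega^k\ba)$ of \emph{bounded degree} over the chosen transcendence basis, so that a Liouville inequality is even applicable (over a field of positive transcendence degree one needs the corresponding several-variable version). Equally delicate is the underlying zero estimate ensuring $E\not\equiv0$ with the right order of vanishing; once the analytic decay rate and the arithmetic growth rate are provably separated, the squeeze closes and the criterion follows.
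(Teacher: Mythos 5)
First, a point of orientation: the paper itself does not prove this lemma. It is Kubota's criterion, quoted from Kubota's 1977 paper and from Nishioka's Lecture Notes and used as a black box, so your attempt has to be measured against those published proofs rather than against anything in this paper. Your sketch reproduces the outer skeleton of Mahler's method correctly (auxiliary polynomial vanishing to high order at the origin, analytic upper bound from condition (III), nonvanishing along the orbit from condition (IV), arithmetic lower bound, squeeze), and the case where all the values are algebraic is indeed closed exactly as you describe. But the decisive step in positive transcendence degree --- the arithmetic lower bound --- is precisely where your argument has a hole that cannot be patched by the tool you cite. A ``Liouville-type (fundamental) inequality over a field of positive transcendence degree'' does not exist as a general principle: if $\omega$ is a Liouville number, the field $\Q(\omega)$ contains nonzero elements $\omega-p/q$ whose degree over the basis and whose natural height data stay modest while $|\omega-p/q|$ decays faster than any prescribed function of those data, so no lower bound for nonzero elements of $\Q(f_1(\ba),\ldots,f_r(\ba))$ of the kind your squeeze needs can hold. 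This is exactly why algebraic independence ($r\geq1$) is fundamentally harder than transcendence ($r=0$, where Liouville's inequality over $\Qbar$ does close the argument). In the actual proofs this obstacle is the whole content: Kubota and Nishioka do not invoke any ready-made inequality over the transcendental base field, but eliminate the transcendental quantities --- using the affine structure that iteration of the functional equation imposes on $f_i(\Omega^k\ba)$ as functions of $f_i(\ba)$, together with elimination-theoretic (resultant/Gelfond-style) descent in the transcendence degree, or equivalently by proving quantitative measure versions inductively. Your proposal names this difficulty in the last paragraph and then assumes it resolved; what is assumed away is the theorem.

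There is also a quantitative flaw earlier in the construction. You take $P\in\mathcal{O}_K[X_1,\ldots,X_m]$ with constant coefficients; then vanishing of $E=P(f_1,\ldots,f_m)$ to order $T$ at the origin of $\C^n$ imposes roughly $T^n$ linear conditions on roughly $N^m$ coefficients, so Siegel's lemma yields only $T\asymp N^{m/n}$. The final comparison needs the gain $cT\rho^k$ to dominate costs that are at least linear in $N$ times $\rho^k$ (coming from the heights of $\Omega^k\ba$ and of the iterated functional-equation coefficients), i.e.\ it needs $T/N\to\infty$, which fails whenever $m\leq n$ --- and the lemma is stated, and applied in this paper, with no relation assumed between $m$ and $n$. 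The standard construction therefore takes $P\in\mathcal{O}_K[\z;X_1,\ldots,X_m]$, polynomial in $\z$ as well, giving $T\asymp N^{1+m/n}$; consistently, it is then algebraic independence of $f_1,\ldots,f_m$ over $K(\z)$ --- not merely over $K$, which is all your constant-coefficient $E$ would require --- that guarantees $E\not\equiv0$, in accordance with the hypothesis of the lemma.
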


\begin{lem}[Kubota \cite{Kubota}, see also Nishioka \cite{Nishioka}]\label{lem:indep_functions}
Let $C$ be a field of characteristic $0$ and $\mathcal{M}$ the quotient field of $C[[z_1,\ldots,z_n]]$. Let $\Omega$ be an $n\times n$ matrix with nonnegative integer entries satisfying the condition {\rm(I)}. Suppose that $f_1(\z),\ldots,f_l(\z)\in \mathcal{M}$ satisfy the functional equations
$$f_i(\z)=f_i(\Omega\z)+b_i(\z)\quad(1\leq i\leq l),$$
where $b_i(\z)\in C(z_1,\ldots,z_n)$ $(1\leq i\leq l)$. Assume that $g_1(\z),\ldots,g_m(\z)\in \mathcal{M}^\times$ satisfy the functional equations
$$g_h(\z)=e_h(\z)g_h(\Omega\z)\quad (1\leq h\leq m),$$
where $e_h(\z)\in C(z_1,\ldots,z_n)$ $(1\leq h\leq m)$. Then, if $f_1(\z),\ldots,f_l(\z)$ and $g_1(\z),\ldots,g_m(\z)$ are algebraically dependent over $C(z_1,\ldots,z_n)$, then at least one of the following two conditions holds:
\begin{enumerate}
\item There exist $c_1,\ldots,c_l\in C$, not all zero, and $f(\z)\in C(z_1,\ldots,z_n)$ such that
\begin{equation}\label{eq: func relations1}
f(\z)=f(\Omega\z)+\sum_{i=1}^lc_ib_i(\z).
\end{equation}
\item There exist integers $d_1,\ldots,d_m$, not all zero, such that
\begin{equation}\label{eq: func relations2}
\prod_{h=1}^mg_h(\z)^{d_h}\in C(z_1,\ldots,z_n)^\times.
\end{equation}
\end{enumerate}
\end{lem}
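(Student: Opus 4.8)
The plan is to recast both functional equations through the substitution operator and reduce the dichotomy to two clean ``independence $\Rightarrow$ dependence of a special form'' statements. Write $T$ for the endomorphism of $\cM$ induced by $\z\mapsto\Omega\z$, so $(Tf)(\z)=f(\Omega\z)$; by (I) the matrix $\Omega$ is nonsingular, so $T$ is injective and maps $K:=C(z_1,\ldots,z_n)$ into itself. The first thing I would record is that the fixed field of $T$ in $\cM$ is exactly $C$: since every eigenvalue of $\Omega$ has absolute value $>1$ and none is a root of unity, the transpose action $\alpha\mapsto\Omega^{\top}\alpha$ drives the exponent of every nonconstant monomial to arbitrarily large total degree under iteration, so a $T$-fixed power series, and hence (writing it as a reduced quotient) a $T$-fixed element of $\cM$, is constant. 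In these terms the hypotheses read $Tf_i=f_i-b_i$ and $Tg_h=e_h^{-1}g_h$ with $b_i\in K$, $e_h\in K^\times$, and the fixed-field statement makes (i) equivalent to the existence of a nontrivial $C$-linear combination $\sum_i c_if_i$ lying in $K$, and (ii) equivalent to the existence of a nontrivial power product $g^{\bm d}:=\prod_h g_h^{d_h}$ lying in $K^\times$. Hence it suffices to prove the contrapositive: if no such combination and no such power product exist, then $f_1,\ldots,f_l,g_1,\ldots,g_m$ are algebraically independent over $K$.

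I would settle the multiplicative variables first. Suppose $g_1,\ldots,g_m$ satisfy a nontrivial relation $\sum_\lambda c_\lambda g^\lambda=0$ over $K$ with the fewest possible nonzero terms, where $g^\lambda:=\prod_h g_h^{\lambda_h}$. Applying $T$ and using $Tg^\lambda=(\prod_h e_h^{-\lambda_h})g^\lambda$ gives a second relation supported on the same monomials; eliminating one fixed monomial $g^{\lambda_0}$ between the two relations yields a relation with strictly fewer terms, so by minimality it is identically zero. The resulting coefficient identities say precisely that $(c_\lambda/c_{\lambda_0})\,g^{\lambda-\lambda_0}$ is $T$-fixed for every $\lambda$ in the support, hence lies in $C$, so $g^{\lambda-\lambda_0}\in K^\times$. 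As the relation has at least two terms this is a nontrivial power product in $K^\times$, contrary to assumption. Thus $g_1,\ldots,g_m$ are algebraically independent over $K$, and $K':=K(g_1,\ldots,g_m)$ is a purely transcendental extension to which $T$ extends with fixed field still $C$.

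It remains to adjoin the additive variables over $K'$, and this is where I expect the genuine work. I would prove a difference-field analogue of the Ostrowski--Kolchin theorem on primitives: algebraic dependence over $K'$ of functions $f_i$ with $(1-T)f_i\in K$ forces a $C$-linear dependence modulo $K'$. Concretely, take a nonzero $P\in K'[X_1,\ldots,X_l]$ with $P(f_1,\ldots,f_l)=0$ of least total degree, and among these of fewest top-degree monomials, normalizing one top coefficient to $1$. Applying $T$ coefficientwise and using $Tf_i=f_i-b_i$ gives $P^{T}(f_1-b_1,\ldots,f_l-b_l)=0$, so the polynomial $P^{T}(X-b)$ vanishes at $(f_1,\ldots,f_l)$ and has the same top-degree part as $P$ up to the coefficient action of $T$; subtracting and using the normalized monomial produces a relation of strictly smaller complexity unless $P=P^{T}(X-b)$ identically, in which case comparing homogeneous parts top-down forces the top coefficients into $C$ and collapses $P$, degree by degree, to a linear form. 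This yields $\sum_i c_if_i\in K'$ with $c_i\in C$ not all zero. Finally I would descend to $K$: writing $w:=\sum_i c_if_i\in K(g)$ and applying $1-T$ gives $(1-T)w=\sum_i c_ib_i\in K$; expanding $w$ in the $g_h$ and matching coefficients shows each term $w_\mu g^\mu$ is $T$-fixed, hence $g^\mu\in K^\times$ for every $\mu$ occurring, so by the absence of nontrivial power products only $\mu=0$ survives and $w\in K$. Then $\sum_i c_if_i\in K$, contradicting the standing assumption.

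The principal obstacle is the interaction, inside a single algebraic relation, between the additive shift $f_i\mapsto f_i-b_i$ and the multiplicative scaling $g_h\mapsto e_h^{-1}g_h$: the tidy ``eliminate one monomial'' descent that disposes of the purely multiplicative case no longer preserves the number of monomials once the $b_i$-shifts are present, because $X\mapsto X-b$ manufactures lower-degree terms. The remedy I would implement is to organize the additive induction by a refined complexity (total degree, then top-degree monomial count) and to check that each pass of ``apply $T$, then subtract the normalized leading term'' strictly decreases it until only a linear relation can remain. Keeping the two reductions compatible---first isolating the $g_h$ to build the purely transcendental base $K'$ and then running the additive descent over $K'$ before pulling the conclusion back to $K$---is the part that would require the most careful bookkeeping.
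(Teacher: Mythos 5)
The paper offers no proof of this lemma to compare against: it is imported verbatim from Kubota (see also Nishioka's Lecture Notes). Measured against the classical argument, your outline is essentially that argument --- pass to the contrapositive, dispose of the $g_h$ by a fewest-terms relation, form $K'=K(g_1,\ldots,g_m)$, dispose of the $f_i$ by a least-complexity relation, then descend from $K'$ to $K$ --- with everything resting on the fact that the $T$-fixed elements of $\cM$ are exactly $C$. Your multiplicative step is correct and complete. However, your justification of the fixed-field fact is wrong: condition (I) asserts only that the spectral radius exceeds $1$, not that every eigenvalue does; for instance the companion matrix $\Omega_1$ of \eqref{eq:Omega} attached to the Fibonacci recurrence has an eigenvalue of modulus less than $1$. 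Moreover the passage from power series to the quotient field is not a matter of ``writing it as a reduced quotient''. Both issues are repaired at no cost by invoking Lemma~\ref{lem:const} (Nishioka) with $a=1$, $b=0$, which the paper states for exactly this purpose.

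The genuine gaps are in the additive half. (a) Your claim that the identity $P=P^{T}(X-b)$ ``collapses $P$, degree by degree, to a linear form'' is false as stated: if $w\in K'$ satisfies $(1-T)w=b_1$, then $P=(X_1-w)^2$ satisfies that identity and has degree $2$. The correct mechanism must use minimality together with the equations of the $f_i$ themselves. Writing $P=P_d+P_{d-1}+\cdots+P_0$ and $a_\nu\in C$ for the coefficients of $P_d$ (these lie in $C$ by your top-degree comparison), the degree-$(d-1)$ part of the identity reads $P_{d-1}=(P_{d-1})^{T}-\sum_i b_i\,\partial_{X_i}P_d$, i.e.\ $(1-T)c_\mu=-\sum_i(\mu_i+1)a_{\mu+e_i}b_i$ for each monomial $X^\mu$ with $|\mu|=d-1$. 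Since $(1-T)f_i=b_i$, the element $c_\mu+\sum_i(\mu_i+1)a_{\mu+e_i}f_i$ is $T$-fixed, hence lies in $C$; choosing $\mu=\nu-e_{i_0}$ with $a_\nu\neq0$ and $\nu_{i_0}\geq1$ makes the linear coefficients not all zero (characteristic $0$), producing a nonzero degree-one relation over $K'$ and contradicting minimality whenever $d\geq2$. Only after this may you conclude $d=1$ and $\sum_i c_if_i\in K'$. (b) Your final descent, ``expanding $w$ in the $g_h$ and matching coefficients'', presumes $w$ is a Laurent polynomial in the $g_h$, whereas a priori $w\in K'$ is a rational function of them. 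You must first write $w=A/B$ with $A,B\in K[g_1,\ldots,g_m]$ coprime and use $(1-T)w\in K$ plus coprimality to show that $B$ and its $T$-image divide one another; since $T$ scales each monomial $g^\mu$ by $\prod_h e_h^{-\mu_h}\in K^\times$ (so the grading is preserved), any two monomials $\mu,\nu$ in the support of $B$ then yield $g^{\mu-\nu}\in K^\times$, so $B$ is a single monomial by the no-power-product hypothesis; only then does your coefficient matching apply. The same remark governs the coefficients $c_\mu\in K'$ appearing in (a). Both gaps are fixable inside your framework, and with them filled the proposal becomes a correct reconstruction of Kubota's theorem.
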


\begin{rem}\label{rem:Adamczewski}
{\rm 
Recently, Adamczewski and Faverjon~\cite{Adamczewski}  made a big progress on  
the algebraic or linear independence of the values of Mahler functions under much more general settings 
than Lemma~\ref{lem:indep_values}. 
More precisely, they dealt with the case of the systems of functional equations $\f(\z)=A(\z)\f(\Omega\z)+\bb(\z)$, 
where $\f(\z), A(\z),$ and $\bb(\z)$ are an $m$-dimensional vector with power series components, an $m\times m$ matrix with rational function entries, and 
an $m$-dimensional vector with rational function components, respectively. 
The main result, Theorem~2.1, of \cite{Adamczewski} asserts the existence of 
a surjection from the polynomials representing algebraic relations over $\Qbar[\z]$ appearing among the $m$ components of $\f(\z)$ to those over $\Qbar$ appearing among the values of the $m$ components of $\f(\z)$  
at an algebraic point $\ba$ satisfying almost the same conditions as in Lemma~\ref{lem:indep_values}. 
On the other hand, Lemma~\ref{lem:indep_functions} provides, 
from any algebraic relation over $\Qbar(\z)$ appearing among the functions $f_1(\z),\ldots,f_l(\z)$ and $g_1(\z),\ldots,g_m(\z)$,  
the simplest relations \eqref{eq: func relations1} or \eqref{eq: func relations2} 
among 
all possible algebraic relations over $\Qbar(\z)$.  
Hence Lemma~
\ref{lem:indep_functions} 
is 
more useful than the results of \cite{Adamczewski} to reduce the algebraic dependence of the values of our functions 
to the existence of certain rational functions 
satisfying the simplest functional equations. 
}\end{rem}

The following lemma ensures a kind of uniqueness of the solutions of the Mahler type functional equations 
in the proofs of Theorems~\ref{thm:main3} and \ref{thm:main4} below. 

\begin{lem}[Nishioka \cite{Nishioka1996}]\label{lem:const}
Let $C$ be a field and $\Omega$ an $n\times n$ matrix with nonnegative integer entries satisfying the condition {\rm(I)}. Then, if an element $f$ of the quotient field of $C[[z_1,\ldots,z_n]]$ satisfies the constant coefficient equation 
$$f(\Omega\z)=af(\z)+b \qquad (a,b\in C),$$
then $f\in C$.
\end{lem}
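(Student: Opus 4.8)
The plan is to exploit the fact that the substitution $\z\mapsto\Omega\z$ sends each monomial $\z^{\m}$ to $\z^{\Omega^{\mathrm{T}}\m}$ and therefore \emph{expands} a suitable weight on monomials. First I would dispose of the degenerate cases: if $a=0$ the equation reads $f(\Omega\z)=b$, and since $S(\z)\mapsto S(\Omega\z)$ is an injective $C$-algebra endomorphism of $C[[z_1,\ldots,z_n]]$ (injective because $\Omega^{\mathrm{T}}$ is injective on $\N^n$, as $\Omega$ is nonsingular), this forces $f=b\in C$; if $a\neq0,1$ the shift $f\mapsto f-b/(1-a)$ reduces the equation to the homogeneous form $f(\Omega\z)=af(\z)$, while the case $a=1$ I would keep and treat directly at the final step. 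To build the weight I would use the strongly connected component decomposition of $\Omega$, which puts it in block triangular form whose diagonal blocks are irreducible nonnegative integer matrices. The Perron root of each such block is an eigenvalue of $\Omega$, hence by condition~(I) a nonnegative real that is neither $0$ (by nonsingularity) nor a root of unity; since such a root is always $\geq1$ (the nonzero integer determinant forces the spectral radius to be at least $1$), it is in fact $>1$. Patching together the strictly positive Perron eigenvectors of the blocks yields $\bm{u}\in\mathbb{R}_{>0}^n$ and $\lambda>1$ with $\Omega\bm{u}\geq\lambda\bm{u}$ componentwise, so that $w(\m):=\langle\bm{u},\m\rangle$ satisfies $w(\m)>0$ for $\m\neq\0$ and $w(\Omega^{\mathrm{T}}\m)=\langle\Omega\bm{u},\m\rangle\geq\lambda\,w(\m)$.

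Using $w$ I would define the valuation $v(S):=\min\{w(\m)\mid\m\in\mathrm{supp}\,S\}$ on $C[[z_1,\ldots,z_n]]\setminus\{0\}$, extended by $v(P/Q)=v(P)-v(Q)$ to the quotient field. Because $\bm{u}>0$ the sublevel sets of $w$ are finite, so the minimum is attained and $v$ is a genuine valuation with $v(S)=0$ precisely when $S$ is a unit; moreover $v(S(\Omega\z))\geq\lambda\,v(S)$. Writing $f=P/Q$ in lowest terms in the unique factorization domain $C[[z_1,\ldots,z_n]]$ and clearing denominators gives $P(\Omega\z)Q(\z)=(aP(\z)+bQ(\z))\,Q(\Omega\z)$, whence by coprimality $Q(\z)\mid Q(\Omega\z)$. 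The crucial step is to show that $Q$ is a unit, i.e. that $f$ has no pole. For this I would pass to the exact weight attached to a Perron eigenvector $\bm{u}_0\geq0$ of $\Omega$, for which $w_0(\Omega^{\mathrm{T}}\m)=\rho\,w_0(\m)$ holds with \emph{equality}, so that the associated valuation obeys $v_0(S(\Omega\z))=\rho\,v_0(S)$; since the constant $b$ has weight $0$ and cannot lower the valuation, the equation forces $\rho\,v_0(f)=v_0(f)$ and hence $v_0(f)=0$. Comparing the lowest weighted-homogeneous layers on the two sides of the cleared equation, and using coprimality of $P$ and $Q$ to exclude a common vanishing at the origin, then yields $Q(\0)\neq0$, so that $f\in C[[z_1,\ldots,z_n]]$.

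Once $f$ is a power series, I would finish by a gap argument. Expanding $f=\sum_{t\geq0}f^{[t]}$ into $w$-homogeneous layers and letting $t_1$ be the smallest \emph{positive} weight with $f^{[t_1]}\neq0$, the expansion property shows that every nonconstant monomial of $f$ contributes to $f(\Omega\z)$ a monomial of weight $\geq\lambda t_1>t_1$, so $f(\Omega\z)$ has no layer of weight exactly $t_1$; matching the weight-$t_1$ layers in the equation (with $a\neq0$, the constant $b$ again inert) gives $a\,f^{[t_1]}=0$, a contradiction. Hence $f$ has no positive-weight part and $f\in C$; in the retained case $a=1$ the same computation kills the nonconstant part of $f$ and then forces $b=0$, consistently. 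I expect the main obstacle to be precisely the pole-exclusion step together with the reducible situation, for then the Perron eigenvector $\bm{u}_0$ degenerates and its zero-weight coordinates span a proper $\Omega$-invariant coordinate subspace, on which $v_0$ is no longer faithful and one is reduced to an equation in fewer variables whose transformation matrix still satisfies condition~(I). I would resolve this by induction on $n$, applying the whole argument on that invariant subspace (the base case $n=1$ being immediate, since then $\Omega=[\,c\,]$ with $c\geq2$ and $w$ is just the total degree), while the coprimality of $P$ and $Q$ is what rules out the cancellations that could otherwise spoil the comparison of lowest layers.
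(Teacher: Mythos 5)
Your proposal gets several things genuinely right: the reduction to $a\neq 0$, the Frobenius-normal-form construction of $\bm{u}\in\mathbb{R}_{>0}^n$ and $\lambda>1$ with $\Omega\bm{u}\geq\lambda\bm{u}$ componentwise (each diagonal block is irreducible and nonsingular, so its Perron root is an eigenvalue of $\Omega$ that is at least $1$ and not a root of unity), and the closing layer-comparison argument, which correctly shows that any solution already lying in $C[[z_1,\ldots,z_n]]$ must be constant. (For reference, the paper itself gives no proof of this lemma---it is quoted from Nishioka---so your argument has to stand on its own.) The genuine gap is the pole-exclusion step, i.e.\ the claim that the denominator $Q$ is a unit of $C[[z_1,\ldots,z_n]]$; this step is the real content of the lemma, because for $n\geq 2$ the quotient field of $C[[\z]]$ is much larger than the ring of series with monomial denominators. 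Your exact-Perron-weight computation yields only $v_0(f)=0$, that is $v_0(P)=v_0(Q)$; it does not give $v_0(Q)=0$, and when $\Omega$ is reducible even $v_0(Q)=0$ would not make $Q$ a unit, since $v_0$ is then not faithful. The sentence you use to bridge this---``using coprimality of $P$ and $Q$ to exclude a common vanishing at the origin''---is simply false in two or more variables: $P=z_1$ and $Q=z_2$ are coprime in $C[[z_1,z_2]]$ and both vanish at $\0$.

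Note moreover that the divisibility $Q(\z)\mid Q(\Omega\z)$, which you derive correctly, has non-unit solutions, e.g.\ $Q=z_1$ for the Fibonacci matrix ($z_1\mapsto z_1z_2$, $z_2\mapsto z_1$), where $Q(\Omega\z)=z_1z_2$; so no valuation bookkeeping on this divisibility alone can finish the proof. What is actually needed is (a) an argument excluding non-monomial prime factors of $Q$, which is what lemmas of the type of Lemma~\ref{lem:gcd} (the gcd of the transforms of coprime elements is a monomial) and Lemma~\ref{lem:monomial} (if $P(\Omega\z)$ divides $P(\z)\z^{\bI}$ then $P$ is a monomial) provide---this is exactly the pattern the paper runs through in proving the rational-function analogue, Theorem~\ref{thm:main4}---and (b) a separate exclusion of monomial denominators (in the Fibonacci example, $f=P/z_1$ forces $z_2\mid P(\Omega\z)$, hence $z_1\mid P$, contradicting coprimality). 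Your write-up contains neither (a) nor (b). Finally, your fallback for the reducible case, induction on $n$ by restricting to the invariant coordinate subspace where the Perron vector vanishes, fails as described: $f=P/Q$ cannot in general be restricted to a coordinate subspace, since $Q$ may vanish identically on it (e.g.\ $Q=z_1-z_2$ on $\{z_1=z_2=0\}$) without being divisible by any monomial; the tool that would repair this, Lemma~\ref{lem:Puiseux}, applies only to elements already known to lie in $C[[\z]]\cap F(\z)$, which is precisely what you are trying to establish.
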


In the rest of this section, we consider the following restricted $\Omega$. Let
\begin{equation}\label{eq:Omega}
\Omega_1\coloneqq\left(
\begin{array}{ccccc}
c_1&1&0&\cdots&0\\
c_2&0&1&\ddots&\vdots\\
\vdots&\vdots&\ddots&\ddots&0\\
\vdots&\vdots&&\ddots&1\\
c_n&0&\cdots&\cdots&0
\end{array}
\right),
\end{equation}
where $c_1,\ldots,c_n$ are the coefficients of the polynomial $\Phi(X)$ defined by \eqref{eq:Phi(X)}. 
Let $s$ be a positive integer and put
\begin{equation}\label{eq:Omega'}
\Omega_2\coloneqq\diag(\underbrace{\Omega_1,\ldots,\Omega_1}_s).
\end{equation}

\begin{lem}[cf. Lemma 4 and Proof of Theorem 2 of Tanaka \cite{Tanaka1996}]\label{lem:admissible}
Suppose that $\Phi(\pm1)\neq0$ and the ratio of any pair of distinct roots of $\Phi(X)$ is not a root of unity.
Let $a_1,\ldots,a_s$ be multiplicatively independent algebraic numbers with $0<|a_i|<1$ $(1\leq i\leq s)$. Then the matrix $\Omega_2$ defined by \eqref{eq:Omega'} and the point
$$\ba\coloneqq(\underbrace{1,\ldots,1}_{n-1},a_1,\ldots,\underbrace{1,\ldots,1}_{n-1},a_s)$$
satisfy the conditions {\rm (I)--(IV)}.
\end{lem}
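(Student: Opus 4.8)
The goal is to verify the four Mahler conditions (I)--(IV) for the block-diagonal matrix $\Omega_2 = \diag(\Omega_1,\ldots,\Omega_1)$ and the point $\ba = (1,\ldots,1,a_1,\ldots,1,\ldots,1,a_s)$. My plan is to exploit the block structure throughout: each block $\Omega_1$ is the companion matrix of $\Phi(X)$, so its characteristic polynomial is exactly $\Phi(X)$, and the eigenvalues of $\Omega_2$ are just those of $\Omega_1$ (with multiplicity $s$). Let $\rho$ denote the dominant root of $\Phi(X)$ (the Perron root, real and positive since the $c_i$ are nonnegative with $c_n\neq 0$). Conditions (I) and (II) are purely statements about $\Omega_1$ and should follow from the hypotheses on $\Phi$ together with standard facts about companion matrices; this is presumably where the cited Tanaka~\cite{Tanaka1996} Lemma~4 does the work.

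\textbf{Conditions (I) and (II).} First I would check (I): $\Omega_1$ is nonsingular because $\det\Omega_1 = \pm c_n \neq 0$, and none of its eigenvalues (the roots of $\Phi$) is a root of unity. The latter follows from the hypothesis that $\Phi(\pm 1)\neq 0$ (excluding $\pm 1$) together with the ``ratio of distinct roots is not a root of unity'' condition: if a root $\lambda$ were a primitive $m$-th root of unity with $m\geq 3$, then its complex conjugate $\bar\lambda = \lambda^{-1}$ would be another root, and $\lambda/\bar\lambda = \lambda^2$ would be a root of unity, forcing $\lambda^2$ and hence these to coincide only in degenerate cases --- I would need to argue carefully here that the non-degeneracy rules this out, or simply cite that this is exactly the content of the nondegeneracy hypothesis. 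Since $\rho > 1$ (as $\Phi$ is not the polynomial of a geometric progression and $\rho$ is a Perron root exceeding $1$), (I) holds. For (II), the growth $O(\rho^k)$ of the entries of $\Omega_2^k$ reduces to the same for $\Omega_1^k$; since $\rho$ is the spectral radius, $\|\Omega_1^k\| = O(\rho^k)$ holds provided the eigenvalue $\rho$ has no nontrivial Jordan block and no other eigenvalue has modulus $\rho$. As $\rho$ is a simple root of $\Phi$ (the companion matrix is non-derogatory, and $\rho$ being the unique dominant root makes it simple by Perron--Frobenius-type reasoning), the factor $k^{j}$ that a Jordan block would contribute does not appear, giving (II).

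\textbf{Conditions (III) and (IV).} For (III) I would compute $\Omega_2^k\ba$ blockwise. Within the $i$-th block, the multiplicative action \eqref{eq:Omegaz} sends the block $(1,\ldots,1,a_i)$ to a vector whose components are powers $a_i^{e}$ where the exponents $e$ are entries of $\Omega_1^k$ applied to the exponent vector $(0,\ldots,0,1)$; these exponents grow like the linear recurrence values, i.e.\ comparably to $\rho^k$, and each component has the form $a_i^{(\text{positive integer}\sim C\rho^k)}$. Since $0 < |a_i| < 1$, taking $\log|\cdot|$ gives $\log|\alpha_\cdot^{(k)}| \leq -c\rho^k$ for a suitable $c > 0$ and all large $k$, which is exactly (III); I would use (II) to control the exponents from above. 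Condition (IV) is the subtlest: it requires that for any nonzero convergent power series $f$, infinitely many $\Omega_2^k\ba$ are non-zeros of $f$. The standard route is to show that the orbit $\{\Omega_2^k\ba\}$ is, in a suitable sense, ``Zariski-dense'' or avoids any fixed hypersurface infinitely often, and this is precisely where the \emph{multiplicative independence} of $a_1,\ldots,a_s$ enters --- it prevents the orbit exponent vectors from satisfying a fixed multiplicative relation.

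\textbf{Main obstacle.} I expect condition (IV) to be the crux. Verifying it rigorously amounts to showing that no fixed nontrivial Laurent monomial relation holds along the orbit infinitely often, which requires combining the multiplicative independence of the $a_i$ with a linear-independence statement about the orbit exponent vectors built from the distinct-root/non-root-of-unity structure of $\Phi$. The block-diagonal form means the exponent data in different blocks evolve under the \emph{same} $\Omega_1$ but with \emph{independent} bases $a_i$, so a monomial vanishing relation would force a $\Qbar$-linear relation among the $\log a_i$ modulo the orbit lattice; multiplicative independence kills the constant part, and the non-degeneracy of $\{R_k\}$ kills the orbit part. I would structure this as a reduction to the one-block case already handled in Tanaka~\cite{Tanaka1996} (the cited Lemma~4 and the proof of Theorem~2), then lift to $s$ blocks using multiplicative independence; the bulk of the argument is therefore an invocation and assembly of \cite{Tanaka1996} rather than a wholly new computation.
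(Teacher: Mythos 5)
The paper itself contains no proof of this lemma: it is imported wholesale from Tanaka \cite{Tanaka1996} (``cf.\ Lemma~4 and Proof of Theorem~2''), so your closing decision to treat the statement as an assembly and invocation of \cite{Tanaka1996} coincides with what the authors actually do. Your sketches of (I)--(III) are essentially sound, and the step you hesitated over does close: a root of unity among the roots of $\Phi$ is either real, hence $\pm1$, excluded by $\Phi(\pm1)\neq0$, or non-real, in which case its complex conjugate $\bar\lambda=\lambda^{-1}$ is a \emph{distinct} root and the ratio $\lambda/\bar\lambda=\lambda^{2}$ is a root of unity, excluded by the ratio hypothesis. One correction: $\rho>1$ does not come from ``$\{R_k\}$ is not a geometric progression'' (which is not even a hypothesis of this lemma); it comes from Kronecker's theorem, since $|\det\Omega_1|=c_n\geq1$ forces $\rho\geq1$, and $\rho=1$ would make all roots of the monic integer polynomial $\Phi$ have modulus exactly $1$, hence be roots of unity. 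Conditions (II) and (III) then follow from Perron--Frobenius theory: $\Omega_1$ is irreducible (its digraph is strongly connected because $c_n\neq0$) and, by the ratio condition, primitive, so $(\Omega_1^k)_{jl}\sim v_jw_l\rho^k$ with $v_j,w_l>0$, which gives both the upper bound in (II) and the positive lower bound on the exponents $(\Omega_1^k)_{jn}$ needed for (III).

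The genuine gap is in (IV), and it is not cosmetic. Condition (IV) quantifies over zero sets of \emph{arbitrary convergent power series}, whereas ``Zariski density'' or ``avoiding any fixed hypersurface'' controls only algebraic hypersurfaces; no argument of that type can yield (IV). The indispensable bridge is Masser's vanishing theorem (the tool behind Lemma~4 of \cite{Tanaka1996}; see also Nishioka's lecture notes \cite{Nishioka}), which, given (I)--(III), reduces (IV) to the nonexistence of a nonzero integer vector $(e_{ij})$ such that $\prod_{i,j}\left((\Omega_2^k\ba)_{ij}\right)^{e_{ij}}=1$ for infinitely many $k$. Your final paragraph analyzes exactly this monomial condition, and correctly: since $(\Omega_2^k\ba)_{ij}=a_i^{(\Omega_1^k)_{jn}}$, multiplicative independence of $a_1,\ldots,a_s$ decouples the blocks; within block $i$ the relation reads $\sum_{j=1}^{n}e_{ij}(\Omega_1^k)_{jn}=0$, where each sequence $(\Omega_1^k)_{jn}$ satisfies \eqref{eq:LRS} in $k$ by Cayley--Hamilton, so nondegeneracy (via Schlickewei--Schmidt or Skolem--Mahler--Lech) forces the combination to vanish identically; and since $\Omega_1^{j}e_n=e_{n-j}$ for $0\leq j\leq n-1$, the vectors $\Omega_1^ke_n$ span, forcing all $e_{ij}=0$. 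But without Masser's theorem this shows only that the orbit avoids monomial relations, which is strictly weaker than (IV). So as a self-contained argument your proposal is incomplete at precisely the step that the paper, too, only resolves by citation.
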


Let $\{R_k\}_{k\geq0}$ be a linear recurrence of nonnegative integers satisfying \eqref{eq:LRS}. Let $s$ be a positive integer and $z_{i1},\ldots,z_{in}$ $(1\leq i\leq s)$ variables, where $n$ is the length of the recurrence formula \eqref{eq:LRS}. We write $\z_i\coloneqq(z_{i1},\ldots,z_{in})$ $(1\leq i\leq s)$ and $\z\coloneqq(\z_1,\ldots,\z_s)$ in the rest of this 
section. 
We define monomials
\begin{equation}\label{eq:M}
M(\z_i)\coloneqq z_{i1}^{R_{n-1}}\cdots z_{in}^{R_0}\quad(1\leq i\leq s).
\end{equation}
Let $\Cbar$ be an algebraically closed field of characteristic $0$. The following theorem and lemma 
play crucial roles in 
the proof of Theorem \ref{thm:main3}.

\begin{thm}\label{thm:main4}
Suppose that $\{R_k\}_{k\geq0}$ satisfies the condition {\rm (ND)} stated in Section~\ref{sec:1}. Assume that $f(\z)\in\Cbar[[\z]]=\Cbar[[\z_1,\ldots,\z_s]]$ satisfies the functional equation of the form
\begin{equation}\label{eq:func eq}
f(\z)=\alpha f(\Omega_2\z)+R(M(\z_1),\ldots,M(\z_s)),
\end{equation}
where $\alpha\in\Cbar^\times$ and $R(X_1,\ldots,X_s)\in\Cbar(X_1,\ldots,X_s)$. Assume further that $R(X_1,\ldots,X_s)$ is of the form
\begin{equation}\label{eq:R}
R(X_1,\ldots,X_s)=\frac{P(X_1,\ldots,X_s)}{Q_1(X_1)\cdots Q_s(X_s)},
\end{equation}
where $P(X_1,\ldots,X_s)\in\Cbar[X_1,\ldots,X_s],$ $Q_i(X_i)\in\Cbar[X_i],$ and $Q_i(0)\neq0$ $(1\leq i\leq s)$. Then, if $f(\z)\in\Cbar(\z),$ then $f(\z)\in\Cbar$ and $R(X_1,\ldots,X_s)\in\Cbar$.
\end{thm}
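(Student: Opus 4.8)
The plan is to reduce everything to showing that $R$ is constant. Indeed, once we know $R=r\in\Cbar$, \eqref{eq:func eq} becomes $f(\Omega_2\z)=\alpha^{-1}f(\z)-\alpha^{-1}r$, a constant-coefficient Mahler equation, so Lemma~\ref{lem:const} forces $f\in\Cbar$ and we are done. Hence, assuming $f\in\Cbar(\z)$, it suffices to derive $R\in\Cbar$. By Lemma~\ref{lem:Puiseux} applied to $f\in\Cbar[[\z]]\cap\Cbar(\z)$ I may write $f=A/B$ with $A,B\in\Cbar[\z]$ coprime and $B(\0)\neq0$; in particular $f$ has only finitely many pole hypersurfaces, none of which passes through the origin. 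All the geometry below takes place on the torus $(\Cbar^\times)^{sn}$, on which $\Omega_2$ (nonsingular by condition (I)) acts as a finite surjective monomial endomorphism.

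The computation driving the proof is the shift identity for the monomials \eqref{eq:M}: using \eqref{eq:Omega} together with the recurrence \eqref{eq:LRS} (so that $R_n=c_1R_{n-1}+\cdots+c_nR_0$), one checks that
\[
M(\Omega_1^{k}\z_i)=z_{i1}^{R_{n-1+k}}z_{i2}^{R_{n-2+k}}\cdots z_{in}^{R_k}\qquad(k\geq0,\ 1\leq i\leq s).
\]
Thus $\Omega_2$ merely shifts the recurrence index, and iterating \eqref{eq:func eq} gives, for every $N\geq1$,
\[
f(\z)=\alpha^{N}f(\Omega_2^{N}\z)+\sum_{k=0}^{N-1}\alpha^{k}R\bigl(M(\Omega_1^{k}\z_1),\ldots,M(\Omega_1^{k}\z_s)\bigr).
\]
Writing $R=P/(Q_1\cdots Q_s)$ as in \eqref{eq:R}, the poles of the $k$-th summand lie along the binomial hypersurfaces $\Gamma^{(k)}_{i,\gamma}\colon M(\Omega_1^{k}\z_i)=\gamma$, where $\gamma$ runs over the nonzero roots of $Q_i$; the exponent vector of $\Gamma^{(k)}_{i,\gamma}$ is $(R_{n-1+k},\ldots,R_k)$, and $\Omega_2$ maps $\Gamma^{(k)}_{i,\gamma}$ onto $\Gamma^{(k-1)}_{i,\gamma}$. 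A key point, which I would extract from condition (ND), is that these exponent vectors are pairwise non-proportional for $k\in\Z$ (extending $\{R_k\}$ by the backward recurrence, legitimate since $c_n\neq0$): proportionality of two of them would make a state $(R_{k'+n-1},\ldots,R_{k'})$ a left eigenvector of a power of $\Omega_1$, which by the non-derogatory structure of the companion matrix $\Omega_1$ forces either $\{R_k\}$ to be a geometric progression or a ratio of two distinct roots of $\Phi$ to be a root of unity, both excluded by (ND). Consequently the $\Gamma^{(k)}_{i,\gamma}$ are pairwise distinct hypersurfaces.

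Suppose now $R\notin\Cbar$ and $R$ has a genuine pole (equivalently, after cancellation some $Q_i$ is non-constant), say along $\Gamma^{(0)}_{i,\gamma}$. By distinctness there is no cancellation among the summands above, so for each $N$ the sum has a genuine pole along every $\Gamma^{(k)}_{i,\gamma}$ with $0\leq k\leq N-1$; equating poles in the iterated equation, each such hypersurface must be a pole of $f$ or of $f(\Omega_2^{N}\z)$, i.e. lie in the finite pole set $\mathcal P$ of $f$ or in $\Omega_2^{-N}(\mathcal P)$. Fix $k_0$ so large that $\Gamma^{(k_0)}_{i,\gamma}\notin\mathcal P$; then for every $N>k_0$ we must have $\Gamma^{(k_0)}_{i,\gamma}\subseteq\Omega_2^{-N}(\mathcal P)$, that is $\Omega_2^{N}(\Gamma^{(k_0)}_{i,\gamma})=\Gamma^{(k_0-N)}_{i,\gamma}\in\mathcal P$. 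Letting $N\to\infty$ produces infinitely many distinct pole hypersurfaces $\Gamma^{(k_0-N)}_{i,\gamma}$ of $f$, contradicting $f\in\Cbar(\z)$. Hence, after cancellation, every $Q_i$ is constant.

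It remains to rule out $R$ being a non-constant polynomial in the $X_i$ (all $Q_i$ constant, $P$ non-constant). Here there is no finite pole to track — the obstruction has moved ``to infinity'', where the shifted monomials $M(\Omega_1^{k}\z_i)$ have strictly increasing degree — and this is the main obstacle. To handle it (and, more robustly, to make the whole argument uniform) I would follow the generic-point method of \cite{Tanaka1999}: expand \eqref{eq:func eq} at a generic point of the relevant extremal divisor and match principal parts along the orbit $\{\Gamma^{(k)}_{i,\gamma}\}$, using the valuation on the function field of $s$ variables introduced for Lemma~\ref{lem:lin indep} — the one multiplied by the dominant eigenvalue $\rho>1$ of $\Omega_1$ under $\z\mapsto\Omega_2\z$. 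Because $f$ is a fixed rational function, this matching forces a nontrivial linear relation, modulo $\Cbar(\z)$, among the rational functions coming from the partial-fraction and monomial pieces of $R$; such a relation is exactly what Lemma~\ref{lem:lin indep} forbids. This contradiction shows $R\in\Cbar$, completing the proof. The heart of the difficulty, and where condition (ND) is indispensable, is precisely this last reduction: controlling, through the index-shifting dynamics and the chosen valuation, the non-interaction of the infinitely many shifted hypersurfaces (or their analogues at infinity).
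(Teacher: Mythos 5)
Your reduction to proving $R\in\Cbar$ (after which Lemma~\ref{lem:const} finishes the job) matches the paper's endgame, and your pole-orbit argument against a nonconstant denominator is a plausible variant of what the paper does, but even there you are glossing over two points. To claim each summand $\alpha^k R(M(\Omega_1^k\z_1),\ldots,M(\Omega_1^k\z_s))$ has a \emph{genuine} pole along $\Gamma^{(k)}_{i,\gamma}$ you must show that $M(\Omega_1^k\z_i)-\gamma$ does not divide the pulled-back numerator; the paper does this via the decomposition $P=(X_\tau-\gamma)F+G$ with $G$ free of $X_\tau$ and a specialization argument. Moreover, ``pairwise non-proportional exponent vectors'' gives distinctness of the hypersurfaces as sets but not coprimality of the defining binomials: binomials such as $\z^{\bm{a}}-\gamma$ are in general reducible and two of them can share irreducible components, which is exactly why the paper invokes Lemma~\ref{lem:coprime} --- a lemma stated only for nonnegative shift indices, whereas your forward images $\Omega_2^{N}(\Gamma^{(k_0)}_{i,\gamma})$ correspond to negative shifts and would need a separate justification.

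The genuine gap is the final case, $R$ a nonconstant polynomial, which you yourself flag as ``the main obstacle'' and then only gesture at. Your plan --- matching principal parts via the valuation behind Lemma~\ref{lem:lin indep} --- cannot work as stated: Lemma~\ref{lem:lin indep} asserts linear independence over $F$ of the functions $\prod_{i=1}^s\left(X_i/(1-\beta_{j_i}X_i)\right)^{m_i+1}$ and is the tool used in the proof of Theorem~\ref{thm:main3}, not of this theorem; when $R$ is a polynomial no such family of functions appears and there is no polar divisor along which to expand. The paper's mechanism is entirely different and rests on two ingredients absent from your proposal. First, it proves $f(\z)\in\Cbar[\z]$: writing $f=A/B$ with $A,B$ coprime, it shows $B(\Omega_2\z)\mid B(\z)\z^{\bI}$ (using Lemma~\ref{lem:gcd}, Lemma~\ref{lem:coprime}, and a generic point $\bm{u}$ of the variety cut out by a putative prime factor $T(\z_\tau)$, with ${\rm trans.\,deg}_{\,\Cbar}\,\Cbar(\bm{u})=n-1$), whence $B$ is a monomial by Lemma~\ref{lem:monomial}, hence constant since $B(\0)\neq0$. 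Second, with $f$ a polynomial having $\delta$ terms, it iterates \eqref{eq:func eq} to get $f(\z)-\alpha^{2\delta+1}f(\Omega_2^{2\delta+1}\z)=\sum_{k=0}^{2\delta}\alpha^kR(M(\Omega_1^k\z_1),\ldots,M(\Omega_1^k\z_s))$; by Lemma~\ref{lem:not geom} the projective exponent vectors $(R_{k+n-1}:\cdots:R_k)$ are pairwise distinct, so the nonconstant terms on the right never cancel and number at least $2\delta+1$, while the left-hand side has at most $2\delta$ terms --- a contradiction. Without the fact that $f$ is a polynomial (which you never establish) and without this counting idea, your treatment of the numerator case does not go through.
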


\noindent 
The proof of this theorem will be postponed to the next section. 

\begin{lem}[A special case of Theorem 2 of Tanaka \cite{Tanaka1999}]\label{lem:1999}
Suppose that $\{R_k\}_{k\geq0}$ satisfies the condition {\rm (ND)} stated in Section \ref{sec:1}. Assume that $g(\z_0)=g(z_1,\ldots,z_n)$ is a nonzero element of the quotient field of $\Cbar[[z_1,\ldots,z_n]]$ satisfying the functional equation of the form
$$g(\z_0)=R(M(\z_0))g(\Omega_1\z_0),$$
where $M(\z_0)=M(z_1,\ldots,z_n)$ is the monomial defined by \eqref{eq:M}, $\Omega_1$ is the matrix defined by \eqref{eq:Omega}, and $R(X)\in\Cbar(X)$ is defined and nonzero at $X=0$. Then, if $g(\z_0)\in\Cbar(z_1,\ldots,z_n)^\times$, then $g(\z_0)\in\Cbar^\times$ and $\,R(X)=1$.
\end{lem}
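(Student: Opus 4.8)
The plan is to handle the hypothesis $g\in\Cbar(\z_0)^\times$ by analysing the zeros and poles of $g$ on the torus $(\Cbar^\times)^n$ through the functional equation, and to show that a non-constant $R$ would force $g$ to have infinitely many of them. First I would record the combinatorial effect of $\Omega_1$ on the relevant monomial: writing $\bm r=(R_{n-1},\dots,R_0)$ one checks $M(\Omega_1^k\z_0)=\z_0^{\,\bm r^{(k)}}$ with $\bm r^{(k)}=(\Omega_1^{T})^k\bm r=(R_{n-1+k},\dots,R_k)$, so iterating the functional equation yields the telescoped identity
\[
g(\z_0)=\Big(\prod_{k=0}^{K-1}R(\z_0^{\,\bm r^{(k)}})\Big)\,g(\Omega_1^K\z_0)\qquad(K\geq1).
\]
The factor $R(\z_0^{\,\bm r^{(k)}})$ has its zeros and poles only along the binomial hypersurfaces $\{\z_0^{\,\bm r^{(k)}}=\gamma\}$ with $\gamma$ a zero or pole of $R$ (here $R(0)\neq0,\infty$ guarantees no contribution along the coordinate hyperplanes).

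Next I would fix a dominant-eigenvalue setup: let $\bm\lambda>0$ be a Perron eigenvector, $\Omega_1\bm\lambda=\rho\bm\lambda$ with $\rho>1$ by condition (I), and grade Laurent monomials by the weight $\langle\cdot,\bm\lambda\rangle$, so that the substitution $\z_0\mapsto\Omega_1\z_0$ multiplies weights by $\rho$ and the leading weighted form is multiplicative. Comparing the top weighted-homogeneous parts of the two sides of the functional equation, and using that the nonsingular monomial substitution sends a binomial to a binomial but a non-binomial irreducible factor to a polynomial whose leading weighted form is again non-binomial and of $\rho$ times larger weight, I would show that $g$ can have no non-binomial zero or pole: otherwise the heaviest such prime would pull back to a strictly heavier non-binomial prime that again divides $g$, contradicting finiteness. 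Hence every prime divisor of $g$ on the torus is of binomial type $\{\z_0^{\,\bm v}=\eta\}$ with a well-defined exponent direction $\bm v$, and the orders $\ord_D(g)$ along these primes (computed, as in \cite{Tanaka1999}, via a generic point of the hypersurface) are the objects I will track.

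The heart of the argument is then to prove that the shifted vectors $\bm r^{(k)}$ are pairwise non-parallel for all large $k$. If $\bm r^{(k)}\parallel\bm r^{(k')}$ for $k<k'$, then $\{R_{m+k'}\}$ and $\{\lambda R_{m+k}\}$ are two solutions of \eqref{eq:LRS} agreeing on $n$ consecutive indices, hence $R_{m+k'}=\lambda R_{m+k}$ for all $m$; writing $R_m$ as a generalized power sum in the roots of $\Phi$, this makes a subsequence along an arithmetic progression geometric, forcing either $\{R_k\}$ to be geometric or a ratio of two distinct roots of $\Phi$ to be a root of unity, exactly what (ND) excludes. Granting this, suppose $R$ is non-constant, with a zero or pole $\gamma\neq0$ of multiplicity $e\neq0$, and let $D_k$ be a component of $\{\z_0^{\,\bm r^{(k)}}=\gamma\}$. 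Applying $\ord_{D_k}$ to the telescoped identity, non-parallelism kills every term with $j\neq k$, giving $\ord_{D_k}(g)=e+\ord_{D_k}\!\big(g(\Omega_1^K\z_0)\big)$ for every $K>k$. Since the fixed direction $\bm r^{(k)}$ can be parallel to a pulled-back direction $(\Omega_1^{T})^K\bm v$, with $\bm v$ ranging over the finitely many directions of $\mathrm{div}(g)$, for only finitely many $K$ (again by non-parallelism), the second term vanishes for all but finitely many $K$, whence it is $0$ and $\ord_{D_k}(g)=e\neq0$. As the $D_k$ are pairwise distinct, $g$ would then have infinitely many zeros or poles, a contradiction; therefore $R$ is constant.

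Finally, with $R=c\in\Cbar^\times$ the equation reads $g(\z_0)=c\,g(\Omega_1\z_0)$, so $\mathrm{div}(g)=\Omega_1^{\ast}\mathrm{div}(g)$; since $\Omega_1^{\ast}$ strictly increases the weight of every binomial prime, $g$ must have empty torus-divisor, i.e.\ $g=c'\z_0^{\,\bm m}$ for some $c'\in\Cbar^\times$ and $\bm m\in\Z^n$. Substituting back gives $\bm m=\Omega_1^{T}\bm m$ and $c=1$; as $1$ is not an eigenvalue of $\Omega_1$ by condition (I), $\bm m=\0$, so $g\in\Cbar^\times$ and $R(X)=c=1$, as required. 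I expect the main obstacle to be the divisor bookkeeping in the two middle steps: rigorously defining and computing $\ord_{D}(g)$ and the pullback multiplicities via a generic point, and pinning down the precise number-theoretic statement (pairwise non-parallelism of the $\bm r^{(k)}$) through which (ND) does its work; this is exactly the content imported from Theorem~2 of \cite{Tanaka1999}.
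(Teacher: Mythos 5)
The paper never proves this lemma itself: it is imported wholesale as a special case of Theorem~2 of Tanaka~\cite{Tanaka1999}, and the method behind that result is the one the paper replays in Section~\ref{sec:4} for the additive analogue (Theorem~\ref{thm:main4}): coprime numerator/denominator bookkeeping, the coprimality Lemma~\ref{lem:coprime}, Lemma~\ref{lem:not geom}, a generic point, and the monomial Lemma~\ref{lem:monomial}. Your torus--divisor reformulation is the same mechanism in different clothing, and its core is sound: your proof that the vectors $(R_{k+n-1},\ldots,R_k)$ are pairwise non-parallel under (ND) is correct (it is essentially Lemma~\ref{lem:not geom}), and, granted control of the prime divisors of $g$, the computation $\ord_{D_k}(g)=e\neq 0$ for every $k$ does produce infinitely many primes in the divisor of $g$ and kills a nonconstant $R$.

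However, both places where you invoke the Perron weight contain genuine gaps. First, to exclude non-binomial primes you claim that the substitution $\z_0\mapsto\Omega_1\z_0$ sends a non-binomial irreducible factor to a polynomial whose leading weighted form is again non-binomial. This is false: the leading form of $T(\Omega_1\z_0)$ equals the leading form of $T$ composed with $\Omega_1$, and the leading form of a non-binomial irreducible $T$ is very often a single monomial (any $T$ with a unique heaviest term), so your descent on ``the heaviest non-binomial prime'' never gets started; moreover, the individual prime factors of the pullback need not have weight larger than that of $T$ --- only their weighted total is controlled, and monomial factors can eat part of it. (The conclusion is true, but it needs a geometric argument --- the image of a translate of a subtorus under the isogeny $\z_0\mapsto\Omega_1\z_0$ is again such a translate, so components of pullbacks of non-binomial primes are non-binomial --- or the route through Lemma~\ref{lem:monomial}; note also that your middle step silently uses this fact to ensure $\ord_{D_k}(g(\Omega_1^K\z_0))=0$ for large $K$.) Second, and more seriously, your final step asserts that $\Omega_1^{\ast}$ strictly increases the weight of every binomial prime. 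This fails whenever $\Phi$ is reducible, which (ND) allows: take $\Phi(X)=X^2-X-6=(X-3)(X+2)$ with $R_0=R_1=1$, so (ND) holds, the Perron vector is $\bm\lambda=(1,2)$, and the direction $\bm u=(2,-1)$ satisfies $\langle\bm u,\bm\lambda\rangle=0$ and $\Omega_1^{T}\bm u=-2\bm u$. Concretely, the prime $z_1^2-\eta z_2$ (weight $2$) pulls back to $-\eta\,z_1^2\,(z_1^2-\eta^{-1/2}z_2)(z_1^2+\eta^{-1/2}z_2)$, whose torus primes again have weight $2$; so weight monotonicity cannot show the divisor of $g$ is empty. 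What actually closes this step is arithmetic, not weights: finiteness of the support together with Nishioka's Lemma~\ref{lem:gcd} (no cancellation among pullbacks of distinct primes) forces a cycle $T(\Omega_1^{l}\z_0)=c\,\z_0^{\bm a}T(\z_0)$, whence the direction of a binomial $T$ would be an eigenvector of $(\Omega_1^{T})^{l}$ with eigenvalue $\pm1$, i.e.\ some root of $\Phi$ would be a root of unity, contradicting (ND). That arithmetic input is precisely what Lemmas~\ref{lem:coprime} and \ref{lem:monomial} --- equivalently, the cited Theorem~2 of \cite{Tanaka1999} --- supply, and it is the ingredient your weight argument was meant to replace but cannot.
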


\subsection{A lemma for proving Theorem \ref{thm:main3}}\label{sec:3.2}
In this subsection we prove the following lemma by using a certain valuation 
on function fields of $s$ variables. 

\begin{lem}\label{lem:lin indep}
Let $F$ be any field. Put $\beta_0\coloneqq0\in F$ and let $\beta_1,\ldots,\beta_J$ be nonzero distinct elements of $F$. Let $M$ be a nonnegative integer, $s$ a positive integer, and $X_1,\ldots,X_s$ variables. 
Then the ${(J+1)}^s{(M+1)}^s$ elements
$$\prod_{i=1}^s\left(\frac{X_i}{1-\beta_{j_i}X_i}\right)^{m_i+1}\quad\left(\begin{array}{c}(j_1,\ldots,j_s)\in{\{0,\ldots,J\}}^s,\\(m_1,\ldots,m_s)\in{\{0,\ldots,M\}}^s\end{array}\right)$$
of $F(X_1,\ldots,X_s)$ are linearly independent over $F$.
\end{lem}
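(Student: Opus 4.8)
The plan is to reduce the multivariate statement to a single-variable one and then induct on $s$. The core claim is this single-variable fact: for any field $E$ containing the distinct elements $\beta_0=0,\beta_1,\ldots,\beta_J$, the $(J+1)(M+1)$ functions $\psi_{j,m}(X)\coloneqq\bigl(X/(1-\beta_jX)\bigr)^{m+1}$ with $0\le j\le J$, $0\le m\le M$ are linearly independent over $E$ in $E(X)$. Granting this, Lemma~\ref{lem:lin indep} would follow by induction on $s$: writing $F(X_1,\ldots,X_s)=K(X_s)$ with $K\coloneqq F(X_1,\ldots,X_{s-1})$, I would group any $F$-linear relation among the products as $\sum_{j_s,m_s}d_{j_s,m_s}\psi_{j_s,m_s}(X_s)=0$, whose coefficients $d_{j_s,m_s}$ lie in $K$ and are $F$-linear combinations of the $(s-1)$-variable products. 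Since $\beta_0,\ldots,\beta_J\in F\subseteq K$, the single-variable fact applied over $E=K$ forces every $d_{j_s,m_s}=0$, and then the induction hypothesis (the case of $s-1$ variables over $F$) forces all the coefficients $c_{\bj,\m}$ to vanish. The base case $s=1$ is exactly the single-variable fact with $E=F$.

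The single-variable fact is where the valuation enters, and this is the step I would carry out first. For each $j^*\in\{1,\ldots,J\}$ I would use the discrete valuation $v_{j^*}\coloneqq\ord_{X=1/\beta_{j^*}}$ of $E(X)$ attached to the place $X=1/\beta_{j^*}$ (legitimate since $\beta_{j^*}\neq0$). A direct computation gives $v_{j^*}(\psi_{j^*,m})=-(m+1)$, because near $X=1/\beta_{j^*}$ the factor $1-\beta_{j^*}X$ vanishes to first order while the numerator $X^{m+1}$ does not; on the other hand $v_{j^*}(\psi_{j,m})\ge0$ for every $j\neq j^*$, since $X/(1-\beta_jX)$ is regular at $X=1/\beta_{j^*}$ (this uses $\beta_j\neq\beta_{j^*}$, and for $j=0$ it is the polynomial $X^{m+1}$). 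Then in any relation $\sum_{j,m}c_{j,m}\psi_{j,m}=0$ the partial sum over the fixed index $j^*$ must have $v_{j^*}\ge0$; but if some $c_{j^*,m}\neq0$ then, because the $\psi_{j^*,m}$ have pairwise distinct pole orders $m+1$ at $X=1/\beta_{j^*}$, the most negative term cannot cancel and $v_{j^*}\bigl(\sum_m c_{j^*,m}\psi_{j^*,m}\bigr)=-\bigl(1+\max\{m:c_{j^*,m}\neq0\}\bigr)<0$, a contradiction. Hence $c_{j^*,m}=0$ for all $m$ and all $j^*\ge1$, leaving $\sum_m c_{0,m}X^{m+1}=0$, which forces $c_{0,m}=0$.

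I expect the main obstacle to be the bookkeeping in the inductive step rather than any deep difficulty: one must be careful that the single-variable statement is invoked over the \emph{enlarged} coefficient field $K=F(X_1,\ldots,X_{s-1})$, not merely over $F$, and that the $\beta_j$ remain distinct and nonzero there (which they do, lying in $F$). The only genuinely substantive point is the observation that the valuations $v_{j^*}$ simultaneously separate the pole locations (different $j^*\ge1$ give different places) and, at a fixed location, the pole orders (different $m$ give different $v_{j^*}$-values), so that no cancellation among the leading polar parts can occur; everything else is routine.
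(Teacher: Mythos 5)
Your proof is correct, but it follows a genuinely different route from the paper's. The paper avoids induction on $s$ entirely: assuming a nontrivial relation, it fixes positive reals $\theta_1,\ldots,\theta_s$ linearly independent over $\Q$, picks an index $(\bj^\ast,\m^\ast)$ with $c_{\bj^\ast,\m^\ast}\neq0$ whose weight $\m^\ast\cdot(\theta_1,\ldots,\theta_s)$ is maximal, substitutes $X_i=\beta_{j_i^\ast}^{-1}(1-t^{\theta_i})$ (and $X_i=t^{-\theta_i}$ when $j_i^\ast=0$), and expands the relation as a Hahn series in the single variable $t$. The $\Q$-linear independence of the $\theta_i$ guarantees that the lowest exponent $-(\m^\ast+\bm{1})\cdot(\theta_1,\ldots,\theta_s)$ can be produced only by the term indexed by $(\bj^\ast,\m^\ast)$, so its coefficient survives and the relation cannot vanish. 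In effect, the paper builds a single rank-one valuation with real values that separates all $s$-tuples of pole locations and pole orders simultaneously; this is the ``valuation on function fields of $s$ variables'' referred to in the introduction. You instead reduce to one variable by induction, invoking the one-variable statement over the enlarged coefficient field $K=F(X_1,\ldots,X_{s-1})$ and proving it via the discrete valuations $\ord_{X=1/\beta_{j^\ast}}$ together with the distinctness of the pole orders $m+1$ at each place; your explicit insistence that the one-variable lemma be stated over an arbitrary field $E\supseteq F$ (not merely over $F$) is exactly the point that makes the induction legitimate, and you handle it correctly. Each approach buys something: yours is more elementary and modular, using only discrete valuations on $E(X)$ and a polynomial identity at the end, whereas the paper's single substitution dispenses with induction and with auxiliary coefficient fields at the price of working with generalized power series having real exponents.
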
 

\begin{proof}
Suppose on the contrary that there exist $c_{\bj,\m}\in F$ $(\bj\in{\{0,\ldots,J\}}^s,\ \m\in{\{0,\ldots,M\}}^s)$, not all zero, such that
\begin{equation}\label{eq:lin combi}
Q(X_1,\ldots,X_s)
\coloneqq  
\sum_{\substack{\bj=(j_1,\ldots,j_s)\in{\{0,\ldots,J\}}^s,\\\m=(m_1,\ldots,m_s)\in{\{0,\ldots,M\}}^s}}c_{\bj,\m}\prod_{i=1}^s\left(\frac{X_i}{1-\beta_{j_i}X_i}\right)^{m_i+1}=0.
\end{equation}
Let $\theta_1,\ldots,\theta_s$ be positive numbers which are linearly independent over $\Q$ and let ${\boldsymbol\theta}=(\theta_1,\ldots,\theta_s)$. 
Let $\m^\ast=(m_1^\ast,\ldots,m_s^\ast)
$ 
be the only element in ${\{0,\ldots,M\}}^s$ such that 
the inner product $\m^\ast\!\cdot{\boldsymbol\theta}$ is 
the largest among 
$\m\cdot{\boldsymbol\theta}\ (\m\in{\{0,\ldots,M\}}^s)$ with $c_{\bj,\m}\neq0$ for some $\bj\in{\{0,\ldots,J\}}^s$. 
Choose a $\bj^\ast=(j_1^\ast,\ldots,j_s^\ast)\in{\{0,\ldots,J\}}^s$ with $c_{\bj^\ast,\m^\ast}\neq0$ and define  
$$R_i(t)=
\left\{
\begin{array}{ll}
\beta_{j_i^\ast}^{-1}(1-t^{\theta_i}) & (j_i^\ast\geq1)\\ \\
t^{-\theta_i} & (j_i^\ast=0)
\end{array}
\right.$$ 
for $1\le i\le s$, where $t$ is a variable. 
Substituting $X_i=R_i(t)\ (1\le i\le s)$ into (\ref{eq:lin combi}), 
we have the Hahn series expansion 
\begin{align*}
Q(R_1(t),\ldots,R_s(t))
&=c_{\bj^\ast,\m^\ast}\left({\textstyle
\prod_{j_i^\ast\geq1}}\beta_{j_i^\ast}^{-m_i^\ast-1}\right)t^{-(\m^\ast+\bm{1})\cdot{\boldsymbol\theta}}\\
&\quad+\left(\mathrm{terms~with~exponents~higher~than~}-(\m^\ast+\bm{1})\cdot{\boldsymbol\theta}\right)\\
&=0
\end{align*}
with $\bm{1}=(1,\ldots,1)$, 
which is a contradiction. 
\end{proof}

\subsection{Proof of Theorem \ref{thm:main3}}\label{sec:3.3}

\begin{proof}[Proof of Theorem \ref{thm:main3}]
Assume on the contrary that there exist distinct $\beta_1,\ldots,\beta_J\in\cB\setminus\{0\}$ and a nonnegative integer $M$ such that the finite set
\begin{align*}
S\coloneqq&\left\{H^{(\m)}(\bbeta_{\bj})\relmiddle|\bj\in{\{0,\ldots,J\}}^s,\ \m\in{\{0,\ldots,M\}}^s\right\}\\
&{\textstyle\bigcup}\left\{H_i^{(m)}(\beta_j)\relmiddle|1\leq i\leq s,\ 0\leq j\leq J,\ 0\leq m\leq M\right\}\\
&{\textstyle\bigcup}\left\{G_i(\beta_j)\relmiddle|1\leq i\leq s,\ 1\leq j\leq J\right\}
\end{align*}
is algebraically dependent, where $\beta_0\coloneqq0$ and
$$\bbeta_{\bj}\coloneqq(\beta_{j_1},\ldots,\beta_{j_s})\quad(\bj=(j_1,\ldots,j_s)\in{\{0,\ldots,J\}}^s).$$ 
Define
\begin{align*}
&h_{\bj,\m}(\z)=h_{\bj,\m}(\z_1,\ldots,\z_s)\\
&\coloneqq\sum_{k=0}^\infty\prod_{i=1}^s\left(\frac{M(\Omega_1^k\z_i)}{1-\beta_{j_i}M(\Omega_1^k\z_i)}\right)^{m_i+1}\quad\left(\begin{array}{c}\bj=(j_1,\ldots,j_s)\in{\{0,\ldots,J\}}^s,\\\m=(m_1,\ldots,m_s)\in{\{0,\ldots,M\}}^s\end{array}\right),
\end{align*}
$$
h_{i,j,m}(\z)=h_{i,j,m}(\z_i)\coloneqq\sum_{k=0}^\infty\left(\frac{M(\Omega_1^k\z_i)}{1-\beta_{j}M(\Omega_1^k\z_i)}\right)^{m+1}\quad\left(\begin{array}{c}1\leq i\leq s,\ 0\leq j\leq J,\\0\leq m\leq M\end{array}\right),
$$
and
$$g_{i,j}(\z)=g_{i,j}(\z_i)\coloneqq\prod_{k=0}^\infty \left(1-\beta_jM(\Omega_1^k\z_i)\right)\quad(1\leq i\leq s,\ 1\leq j\leq J),$$
where $\Omega_1$ and $M(\z_i)$ $(1\leq i\leq s)$ are defined by \eqref{eq:Omega} and \eqref{eq:M}, respectively. 
From \eqref{eq:LRS}, \eqref{eq:Omegaz}, \eqref{eq:Omega}, 
and \eqref{eq:M}, we see  by induction on $k$ that
$$M(\Omega_1^k\z_i)=z_{i1}^{R_{k+n-1}}\cdots z_{in}^{R_k}\quad(1\leq i\leq s,\ k\geq0).$$
Letting
$$\ba\coloneqq(\underbrace{1,\ldots,1}_{n-1},a_1,\ldots,\underbrace{1,\ldots,1}_{n-1},a_s),$$
we have 
$$h_{\bj,\m}(\ba)=\left(\prod_{i=1}^sm_i!\right)^{-1}H^{(\m)}(\bbeta_{\bj})\quad\left(\begin{array}{c}\bj=(j_1,\ldots,j_s)\in{\{0,\ldots,J\}}^s,\\\m=(m_1,\ldots,m_s)\in{\{0,\ldots,M\}}^s\end{array}\right),$$
$$h_{i,j,m}(\ba)=h_{i,j,m}(\underbrace{1,\ldots,1}_{n-1},a_i)=(m!)^{-1}H_i^{(m)}(\beta_j)\quad\left(\begin{array}{c}1\leq i\leq s,\ 0\leq j\leq J,\\0\leq m\leq M\end{array}\right),$$
and
$$g_{i,j}(\ba)=g_{i,j}(\underbrace{1,\ldots,1}_{n-1},a_i)=G_i(\beta_j)\quad(1\leq i\leq s,\ 1\leq j\leq J).$$
Since the set $S$ is algebraically dependent, so is the set
\begin{align*}
T\coloneqq&\{h_{\bj,\m}(\ba)\mid\bj\in{\{0,\ldots,J\}}^s,\ \m\in{\{0,\ldots,M\}}^s\}\\
&\cup\{h_{i,j,m}(\ba)\mid1\leq i\leq s,\ 0\leq j\leq J,\ 0\leq m\leq M\}\\
&\cup\{g_{i,j}(\ba)\mid1\leq i\leq s,\ 1\leq j\leq J\}.
\end{align*}
Here we have the functional equations
\begin{align}\label{eq:func_eq 1}
h_{\bj,\m}(\z)=h_{\bj,\m}(\Omega_2\z)+\prod_{i=1}^s\left(\frac{M(\z_i)}{1-\beta_{j_i}M(\z_i)}\right)^{m_i+1}\nonumber\\
\left(\begin{array}{c}\bj=(j_1,\ldots,j_s)\in{\{0,\ldots,J\}}^s,\\\m=(m_1,\ldots,m_s)\in{\{0,\ldots,M\}}^s\end{array}\right),
\end{align}
\begin{equation}\label{eq:func_eq 2}
h_{i,j,m}(\z)=h_{i,j,m}(\Omega_2\z)+\left(\frac{M(\z_i)}{1-\beta_{j}M(\z_i)}\right)^{m+1}\quad\left(\begin{array}{c}1\leq i\leq s,\ 0\leq j\leq J,\\0\leq m\leq M\end{array}\right),
\end{equation}
and
\begin{equation}\label{eq:func_eq 3}
g_{i,j}(\z)=(1-\beta_jM(\z_i))g_{i,j}(\Omega_2\z)\quad(1\leq i\leq s,\ 1\leq j\leq J),
\end{equation}
where $\Omega_2$ is defined by \eqref{eq:Omega'}. Since the set of the values $T$ is algebraically dependent, by Lemmas \ref{lem:indep_values}, \ref{lem:indep_functions}, and \ref{lem:admissible} at least one the following two cases arises:
\begin{enumerate}
\item There exist algebraic numbers $c_{\bj,\m}$ $(\bj\in{\{0,\ldots,J\}}^s,\ \m\in{\{0,\ldots,M\}}^s)$ and $c_{i,j,m}$ $(1\leq i\leq s,\ 0\leq j\leq J,\ 0\leq m\leq M)$, not all zero, and $h(\z)\in\Qbar(\z)$ such that
\begin{align}\label{eq:func_eq 4}
h(\z)=h(\Omega_2\z)+\sum_{\substack{\bj=(j_1,\ldots,j_s)\in{\{0,\ldots,J\}}^s,\\\m=(m_1,\ldots,m_s)\in{\{0,\ldots,M\}}^s}}c_{\bj,\m}\prod_{i=1}^s\left(\frac{M(\z_i)}{1-\beta_{j_i}M(\z_i)}\right)^{m_i+1}\nonumber\\
+\sum_{i=1}^s\sum_{j=0}^J\sum_{m=0}^M c_{i,j,m}\left(\frac{M(\z_i)}{1-\beta_jM(\z_i)}\right)^{m+1}.
\end{align}
\item There exist integers $d_{i,j}$ $(1\leq i\leq s,\ 1\leq j\leq J)$, not all zero, such that
$$
g(\z)\coloneqq\prod_{i=1}^s\prod_{j=1}^Jg_{i,j}(\z_i)^{d_{i,j}}\in\Qbar(\z)^\times.
$$
\end{enumerate}

Assume first that the case (i) arises. Then Lemma \ref{lem:const} together with the functional equations \eqref{eq:func_eq 1}, \eqref{eq:func_eq 2}, and \eqref{eq:func_eq 4} leads to
$$
h(\z)-\left(\sum_{\substack{\bj=(j_1,\ldots,j_s)\in{\{0,\ldots,J\}}^s,\\\m=(m_1,\ldots,m_s)\in{\{0,\ldots,M\}}^s}}c_{\bj,\m}h_{\bj,\m}(\z)+\sum_{i=1}^s\sum_{j=0}^J\sum_{m=0}^M c_{i,j,m}h_{i,j,m}(\z)\right)\in\Qbar
$$
and so $h(\z)\in\Qbar[[\z]]$. Hence by Theorem \ref{thm:main4} and the functional equation \eqref{eq:func_eq 4}, the rational function
\begin{align}\label{eq:lin combi 4}
\sum_{\substack{\bj=(j_1,\ldots,j_s)\in{\{0,\ldots,J\}}^s,\\\m=(m_1,\ldots,m_s)\in{\{0,\ldots,M\}}^s}}c_{\bj,\m}\prod_{i=1}^s\left(\frac{X_i}{1-\beta_{j_i}X_i}\right)^{m_i+1}+\sum_{i=1}^s\sum_{j=0}^J\sum_{m=0}^M c_{i,j,m}\left(\frac{X_i}{1-\beta_jX_i}\right)^{m+1}
\end{align}
in $\Qbar(X_1,\ldots,X_s)$, where $X_1,\ldots,X_s$ are variables, is an element of $\Qbar$, and so it is equal to zero. 
Fix $i\in\{1,\ldots,s\}$. Noting $s\geq2$ and substituting $X_1=\cdots=X_{i-1}=X_{i+1}=\cdots=X_s=0$ into \eqref{eq:lin combi 4}, we have
$$\sum_{j=0}^J\sum_{m=0}^M c_{i,j,m}\left(\frac{X_i}{1-\beta_jX_i}\right)^{m+1}=0.$$
Hence by Lemma \ref{lem:lin indep} with $s=1$ we see that $c_{i,j,m}=0$ for all $0\leq j\leq J$ and $0\leq m\leq M$. Since $i\in\{1,\ldots,s\}$ is arbitrary, by \eqref{eq:lin combi 4} we obtain
$$\sum_{\substack{\bj=(j_1,\ldots,j_s)\in{\{0,\ldots,J\}}^s,\\\m=(m_1,\ldots,m_s)\in{\{0,\ldots,M\}}^s}}c_{\bj,\m}\prod_{i=1}^s\left(\frac{X_i}{1-\beta_{j_i}X_i}\right)^{m_i+1}=0.$$
Then again by Lemma \ref{lem:lin indep} we have $c_{\bj,\m}=0$ for all $\bj\in{\{0,\ldots,J\}}^s$ and $\m\in{\{0,\ldots,M\}}^s$, which is a contradiction.

Assume next that the case (ii) arises. Since $g(\z)\in\Qbar[[\z]]\cap\Qbar(\z)$ and since $g(\0)=1$ with $\0$ the $sn$-dimensional zero vector, by Lemma \ref{lem:Puiseux} we can write
$$g(\z)=A(\z)/B(\z),\quad A(\z),B(\z)\in\Qbar[\z],\quad A(\0),B(\0)\neq0.$$
Picking 
$i\in\{1,\ldots,s\}$ 
for which $d_{i,j}$ $(1\leq j\leq J)$ are not all zero and letting
\begin{align*}
g^\ast(\z_i)&\coloneqq g(\0,\ldots,\0,\z_i,\0,\ldots,\0)=\prod_{j=1}^Jg_{i,j}(\z_i)^{d_{i,j}},\\
A^\ast(\z_i)&\coloneqq A(\0,\ldots,\0,\z_i,\0,\ldots,\0)\in\Qbar[\z_i]\setminus\{0\},\\
B^\ast(\z_i)&\coloneqq B(\0,\ldots,\0,\z_i,\0,\ldots,\0)\in\Qbar[\z_i]\setminus\{0\},
\end{align*}
where $\0$'s are the $n$-dimensional zero vectors, we obtain
$$g^\ast(\z_i)=A^\ast(\z_i)/B^\ast(\z_i)\in\Qbar(\z_i)^\times.$$
From \eqref{eq:func_eq 3} we see that $g^\ast(\z_i)$ satisfies the functional equation
$$g^\ast(\z_i)=\left(\prod_{j=1}^J(1-\beta_jM(\z_i))^{d_{i,j}}\right)g^\ast(\Omega_1\z_i).$$
Then by Lemma \ref{lem:1999}
$$\prod_{j=1}^J(1-\beta_jX_i)^{d_{i,j}}=1$$
holds. Taking the logarithmic derivative of this equation and then multiplying both sides by $-X_i$, we get
$$\sum_{j=1}^J\beta_jd_{i,j}\frac{X_i}{1-\beta_jX_i}=0.$$
Therefore by Lemma \ref{lem:lin indep} with $s=1$ and $M=0$, we see that $\beta_jd_{i,j}=0$ or $d_{i,j}=0$ for all $1\leq j\leq J$, which is a contradiction.
\end{proof}

\section{Proof of Theorem \ref{thm:main4}}\label{sec:4}
In what follows, $C$ and $\Cbar$ denote a field of characteristic $0$ and its algebraic closure, respectively. 
We prepare the following four lemmas to prove Theorem~\ref{thm:main4}.

\begin{lem}[Nishioka \cite{Nishioka}]\label{lem:gcd}
Let $\Omega$ be an $n\times n$ non-singular matrix with nonnegative integer entries. Then, if $A(\z),B(\z)\in C[z_1,\ldots,z_n]$, then $\gcd{(A(\Omega\z),B(\Omega\z))}=\z^{\bI}$, where $\Omega\z$ is defined by \eqref{eq:Omegaz} and $\bI\in\N^n$.
\end{lem}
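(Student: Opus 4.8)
The plan is to pass to the Laurent polynomial ring $L:=C[z_1^{\pm1},\dots,z_n^{\pm1}]$, a unique factorization domain whose units are exactly the $c\,\z^{\bm a}$ with $c\in C^\times$ and $\bm a\in\Z^n$. Since $A(\Omega\z)$ and $B(\Omega\z)$ are honest polynomials (because $\Omega$ has nonnegative entries), their polynomial $\gcd$ equals a monomial $\z^{\bI}$ with $\bI\in\N^n$ exactly when they share no common irreducible factor in $L$, equivalently when no non-monomial irreducible of $C[\z]$ divides both. I take $A$ and $B$ to be relatively prime in $C[\z]$, as the monomial form of the conclusion requires; this coprimality passes to $L$, localization at the $z_i$ being able only to remove, never to create, common factors.

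The central object is the ring endomorphism $\sigma$ of $L$ induced by $\z\mapsto\Omega\z$, that is $z_i\mapsto\prod_{j}z_j^{\omega_{ij}}$; on monomials it acts by $\sigma(\z^{\bm a})=\z^{\Omega^{\mathsf T}\bm a}$, so it is injective because $\Omega$ is nonsingular. Its image $\sigma(L)$ is the Laurent subring whose exponent lattice $\Omega^{\mathsf T}\Z^n$ has finite index $d:=\lvert\det\Omega\rvert$ in $\Z^n$. As $\Z^n/\Omega^{\mathsf T}\Z^n$ has order $d$, each $d\,\bm e_i$ lies in $\Omega^{\mathsf T}\Z^n$, whence $z_i^{d}\in\sigma(L)$; thus every $z_i$ is integral over $\sigma(L)$ and $L$ is a finite integral extension of $\sigma(L)$, which, being itself isomorphic to a Laurent polynomial ring, is integrally closed. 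Geometrically this just records that $\z\mapsto\Omega\z$ is a finite surjective isogeny of the torus $(\Cbar^\times)^n$ of degree $d$, and the lemma asserts that such a map cannot fuse the coprime hypersurfaces cut out by $A$ and $B$.

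The finish is then short. Suppose some non-monomial irreducible $p\in L$ divided both $\sigma(A)=A(\Omega\z)$ and $\sigma(B)=B(\Omega\z)$, and let $\mathfrak P=(p)$ be the associated height-one prime of $L$. Then $\sigma(A),\sigma(B)\in\mathfrak P\cap\sigma(L)$, and since $L$ is integral over the normal domain $\sigma(L)$, Going-Down together with Lying-Over and incomparability yields $\operatorname{ht}(\mathfrak P\cap\sigma(L))=\operatorname{ht}(\mathfrak P)=1$. Transporting this height-one prime through the isomorphism $\sigma^{-1}\colon\sigma(L)\xrightarrow{\ \sim\ }L$ produces a height-one, hence principal, prime of $L$ containing both $A$ and $B$; its generator is a common irreducible factor of $A$ and $B$, contradicting coprimality. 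Hence no such $p$ exists and the polynomial $\gcd$ is $\z^{\bI}$ for some $\bI\in\N^n$.

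The one genuinely substantive step — my expected main obstacle — is the height equality $\operatorname{ht}(\mathfrak P\cap\sigma(L))=\operatorname{ht}(\mathfrak P)$, which is precisely where nonsingularity of $\Omega$ (making $\sigma$ injective with $L$ finite over $\sigma(L)$) and normality of $\sigma(L)$ (licensing Going-Down) become indispensable. If one prefers to avoid this dimension theory, the adjugate of $\Omega$ furnishes a second monomial substitution $\tau$ of $L$ with $\tau\circ\sigma=\sigma\circ\tau=[d]$, the map $z_i\mapsto z_i^{d}$; applying $\tau$ to a hypothetical common factor reduces the whole assertion to the single concrete case of $[d]$, which over $\Cbar$ is settled by descending the $(\mu_d)^n$-Galois orbits of irreducible factors along the extension $L/[d](L)$.
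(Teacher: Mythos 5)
Your proposal is correct, but note that there is nothing in this paper to compare it against: the lemma is imported as a black box from Nishioka's book \cite{Nishioka}, and the paper never proves it. Two remarks on your reading. First, you were right to add the hypothesis that $A(\z)$ and $B(\z)$ are coprime in $C[\z]$; the statement as printed omits it, it is false without it, and it is exactly how the lemma is used in Section~\ref{sec:4}, where $f(\z)=A(\z)/B(\z)$ is written in lowest terms. Second, your main argument checks out at every step: coprimality does pass to the Laurent ring $L$ (its height-one primes are generated by the non-monomial irreducibles of $C[\z]$, so localization can only delete common factors); $\sigma(L)$ is the group algebra of the finite-index sublattice $\Omega^{\mathsf T}\Z^n$, so $L$ is module-finite (in fact free of rank $|\det\Omega|$, one basis monomial per coset) over the normal domain $\sigma(L)\cong L$; incomparability gives $\operatorname{ht}(\mathfrak{P}\cap\sigma(L))\geq\operatorname{ht}(\mathfrak{P})$ and Going-Down (legitimate here, since $\sigma(L)$ is integrally closed and $L$ is a domain) gives the reverse inequality; and pulling the resulting height-one prime back through the isomorphism $\sigma$ yields a principal prime of the UFD $L$ whose generator is a non-unit common divisor of $A$ and $B$, the desired contradiction.

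As for the comparison with the literature: the classical proof of this lemma in the Kubota--Nishioka tradition is essentially your closing ``alternative'' route, namely compose with the adjugate substitution to reduce to the single map $z_i\mapsto z_i^{d}$ and then descend $(\mu_d)^n$-orbits of irreducible factors to the invariant subring. That route is more elementary but needs the field to contain the $d$-th roots of unity (so one passes to $\Cbar$, and must then also check that coprimality of $A,B$ survives extension of scalars), and your aside there is only a sketch --- e.g.\ the fact that $\tau$ carries non-units to non-units wants the term-counting argument that injective monomial substitutions preserve the number of terms. Your Going-Down argument, by contrast, is heavier machinery but is complete as written and works over an arbitrary field $C$ of any characteristic, with no algebraic closure and no roots of unity; that is a genuine, if modest, gain in generality and cleanliness over the textbook proof.
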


\begin{lem}[Tanaka \cite{Tanaka1999}]\label{lem:coprime}
Let $\{R_k\}_{k\geq0}$ be a linear recurrence of nonnegative integers satisfying \eqref{eq:LRS}. Suppose that $\{R_k\}_{k\geq0}$ is not a geometric progression. Assume that the ratio of any pair of distinct roots of $\Phi(X)$ defined by \eqref{eq:Phi(X)} is not a root of unity. Then, if $k_1$ and $k_2$ are distinct nonnegative integers, then $M(\Omega_1^{k_1}\z)-\gamma_1$ and $M(\Omega_1^{k_2}\z)-\gamma_2$ are coprime, where $M(\z)=M(z_1,\ldots,z_n)$ is the monomial defined by \eqref{eq:M}, $\Omega_1$ is the matrix defined by \eqref{eq:Omega}, and $\gamma_1,\gamma_2\in C^\times$.
\end{lem}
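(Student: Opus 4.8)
The plan is to treat the two factors as Laurent binomials and reduce their coprimality to a combinatorial statement about exponent vectors. Writing $\bm{e}_k\coloneqq(R_{k+n-1},\dots,R_k)\in\N^n$, the computation recorded just before the lemma gives $M(\Omega_1^k\z)=z_1^{R_{k+n-1}}\cdots z_n^{R_k}=\z^{\bm{e}_k}$, so the two factors in question are $\z^{\bm{e}_{k_1}}-\gamma_1$ and $\z^{\bm{e}_{k_2}}-\gamma_2$ with $\gamma_1,\gamma_2\in C^\times$. Since $\gamma_i\neq0$, neither polynomial is divisible by any variable, so their factorizations in $\Cbar[z_1,\dots,z_n]$ and in the Laurent ring $\Cbar[z_1^{\pm1},\dots,z_n^{\pm1}]$ coincide, and I may argue on the torus. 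I would first show that a nontrivial common factor forces $\bm{e}_{k_1}$ and $\bm{e}_{k_2}$ to be $\Q$-linearly dependent: the torus homomorphism $\phi\colon(\Cbar^\times)^n\to(\Cbar^\times)^2$, $\z\mapsto(\z^{\bm{e}_{k_1}},\z^{\bm{e}_{k_2}})$, has all nonempty fibres of the same dimension $n-\operatorname{rank}\{\bm{e}_{k_1},\bm{e}_{k_2}\}$; a common divisor would produce an $(n-1)$-dimensional component inside the single fibre $\phi^{-1}(\gamma_1,\gamma_2)$, which is impossible unless the rank drops to $1$, i.e. the two exponent vectors are parallel.

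It then remains to prove that $\bm{e}_{k_1}$ and $\bm{e}_{k_2}$ are never parallel when $k_1\neq k_2$. First I would note that each $\bm{e}_k\neq\0$: if $n$ consecutive terms $R_k,\dots,R_{k+n-1}$ all vanished, then invertibility of the recurrence ($c_n\neq0$) would force $R_0=\cdots=R_{n-1}=0$, contrary to hypothesis. Assuming $\bm{e}_{k_2}=\lambda\bm{e}_{k_1}$ with $\lambda\in\Q$, $\lambda>0$, and $h\coloneqq k_2-k_1>0$, we get $R_{k_2+j}=\lambda R_{k_1+j}$ for $j=0,\dots,n-1$; since the shifted sequences $\{R_{k_1+j}\}_j$ and $\{R_{k_2+j}\}_j$ obey the same order-$n$ recurrence and agree after scaling on $n$ consecutive indices, they agree everywhere, so $R_{m+h}=\lambda R_m$ for all $m\geq k_1$. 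Substituting the generalized power-sum form $R_m=\sum_i p_i(m)\omega_i^m$, where the $\omega_i$ are the distinct roots of $\Phi(X)$ and $p_i\in\Cbar[m]$, and comparing coefficients of the linearly independent functions $m^t\omega_i^m$, I would conclude that every root $\omega_i$ with $p_i\neq0$ satisfies $\omega_i^h=\lambda$ and that each such $p_i$ is constant.

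This produces the contradiction from the two hypotheses. If two distinct roots $\omega_i,\omega_{i'}$ are active, then $(\omega_i/\omega_{i'})^h=1$ exhibits a root of unity as the ratio of distinct roots of $\Phi(X)$; if only one root is active, then $R_m=\kappa\,\omega_i^m$ is a geometric progression. Either way a hypothesis is violated, so the vectors are non-parallel and the binomials are coprime. I expect the principal obstacle to be the geometric reduction of the first paragraph: one must carefully match the polynomial and Laurent factorizations and justify rigorously that every nonempty fibre of a torus homomorphism has constant dimension, so that a codimension-one common component is excluded exactly when the exponent vectors are independent. The power-sum bookkeeping, including the repeated-root case where periodicity forces each surviving $p_i$ to be constant, is routine by comparison.
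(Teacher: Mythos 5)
Your proof is correct, but note that there is nothing in this paper to compare it against: Lemma~\ref{lem:coprime} is quoted from Tanaka~\cite{Tanaka1999} and no proof is reproduced here, so your argument is in effect a self-contained substitute for the cited source. Both halves of your plan go through. In the geometric half, the two facts you flag as the main obstacle are standard and unproblematic: since $\gamma_1,\gamma_2\neq0$, any common irreducible factor $T$ satisfies $T(\0)\neq0$ (a factor of a polynomial with nonzero constant term has nonzero constant term), so $V(T)$ is not contained in any coordinate hyperplane and meets the torus in an $(n-1)$-dimensional set lying inside $\phi^{-1}(\gamma_1,\gamma_2)$; and a nonempty fibre of $\phi$ is a coset of $\ker\phi$, which is the diagonalizable group with character group $\Z^n/L$, $L=\Z\bm{e}_{k_1}+\Z\bm{e}_{k_2}$, hence of dimension $n-\operatorname{rank}L$, forcing $\operatorname{rank}L\leq1$. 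In the arithmetic half, two small points should be made explicit: every root of $\Phi$ is nonzero (since $\Phi(0)=-c_n\neq0$), so the conclusion $\omega_i^h=\lambda\neq0$ is meaningful, and at least one $p_i$ is nonzero (otherwise $R_m\equiv0$, contradicting that $R_0,\ldots,R_{n-1}$ are not all zero), so your case split into two active roots versus one active root is exhaustive. It is also worth observing that this half of your argument silently re-proves Lemma~\ref{lem:not geom} of the paper (likewise quoted from \cite{Tanaka1999} under identical hypotheses): once you have $R_{m+h}=\lambda R_m$ on $n$ consecutive indices, reversibility of the recurrence ($c_n\neq0$) propagates it to all $m\geq0$, and you could then simply invoke that lemma in place of the power-sum bookkeeping. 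Finally, you prove coprimality in $\Cbar[\z]$, which is the version actually used in the proof of Theorem~\ref{thm:main4} (where the lemma is applied with $C=\Cbar$), and it implies coprimality over any subfield $C$, so your statement is exactly as strong as needed.
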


\begin{lem}[Tanaka \cite{Tanaka1999}]\label{lem:not geom}
Let $\{R_k\}_{k\geq0}$ be as in Lemma $\ref{lem:coprime}$. Then the sequence $\{R_k\}_{k\geq0}$ does not satisfy the linear recurrence relation of the form
$$R_{k+l}=cR_k\quad(k\geq0),$$
where $l$ is a positive integer and $c$ is a nonzero rational number.
\end{lem}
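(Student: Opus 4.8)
The plan is to argue by contradiction: assume that $R_{k+l}=cR_k$ for all $k\geq0$, with $l$ a positive integer and $c\in\Q^\times$, and derive a contradiction from the two standing hypotheses of Lemma~\ref{lem:coprime} (that $\{R_k\}_{k\geq0}$ is not a geometric progression and that the ratio of any two distinct roots of $\Phi(X)$ is not a root of unity). First I would write the closed form of the recurrence \eqref{eq:LRS},
$$R_k=\sum_{j=1}^{r}p_j(k)\,\rho_j^{\,k}\qquad(k\geq0),$$
where $\rho_1,\ldots,\rho_r$ are the distinct roots of $\Phi(X)$ from \eqref{eq:Phi(X)} and each $p_j\in\C[k]$ has degree strictly less than the multiplicity of $\rho_j$. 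Note that $\Phi(0)=-c_n\neq0$ forces every $\rho_j\neq0$, and that $R_0,\ldots,R_{n-1}$ not all zero forces at least one $p_j\neq0$.

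Next I would substitute this form into the assumed relation. Since $R_{k+l}=\sum_{j}\bigl(p_j(k+l)\rho_j^{\,l}\bigr)\rho_j^{\,k}$ and $cR_k=\sum_{j}c\,p_j(k)\,\rho_j^{\,k}$, both sides are exponential polynomials in $k$ built from the \emph{same} bases $\rho_1,\ldots,\rho_r$. Invoking the uniqueness of such a representation---equivalently, the linear independence over $\C$ of the functions $k\mapsto k^m\rho^{\,k}$ for distinct $\rho$ and varying $m$---I would match the coefficient of each $\rho_j^{\,k}$ to obtain, for every $j$ with $p_j\neq0$, the polynomial identity
$$p_j(k+l)\,\rho_j^{\,l}=c\,p_j(k).$$
Comparing leading coefficients, and using that the shift $k\mapsto k+l$ leaves the leading coefficient of a polynomial unchanged, yields $\rho_j^{\,l}=c$ for every root $\rho_j$ that actually occurs in the closed form.

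Finally I would split into two cases. If two distinct roots $\rho_{j_1},\rho_{j_2}$ occur, then $\rho_{j_1}^{\,l}=c=\rho_{j_2}^{\,l}$ gives $(\rho_{j_1}/\rho_{j_2})^{l}=1$, so the ratio of a pair of distinct roots of $\Phi(X)$ is a root of unity, contradicting the hypothesis. If only one root $\rho_{j_0}$ occurs, then $\rho_{j_0}^{\,l}=c$ collapses the identity above to $p_{j_0}(k+l)=p_{j_0}(k)$ for all $k\geq0$; a polynomial invariant under a nonzero shift is necessarily constant, whence $R_k=p_{j_0}(k)\rho_{j_0}^{\,k}$ is a geometric progression, again a contradiction. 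The hard part will be the clean separation of the individual roots, which rests on the uniqueness of the exponential-polynomial representation, together with the realization that the single-root case---where the root-of-unity hypothesis is vacuous---must instead be closed off by the ``periodicity forces constancy'' argument and the non-geometric hypothesis.
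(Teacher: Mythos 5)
Your proof is correct. Note, however, that the paper itself gives no proof of this statement: it is imported as a known result from Tanaka \cite{Tanaka1999}, so there is no internal argument to compare against. Your argument is the natural (and, to the best of my knowledge, essentially the original) one: write $R_k=\sum_j p_j(k)\rho_j^k$ using $\Phi(0)=-c_n\neq0$ so that every root is nonzero, invoke the linear independence over $\C$ of the functions $k\mapsto k^m\rho^k$ (distinct nonzero $\rho$, varying $m$) to split the assumed relation $R_{k+l}=cR_k$ into the polynomial identities $p_j(k+l)\rho_j^{\,l}=c\,p_j(k)$, and compare leading coefficients to get $\rho_j^{\,l}=c$ for every root that occurs. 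The two-case ending is exactly right and uses exactly the two standing hypotheses: two occurring roots give a ratio that is an $l$-th root of unity, while a single occurring root forces $p_{j_0}(k+l)=p_{j_0}(k)$, hence $p_{j_0}$ constant, hence $\{R_k\}$ geometric. The only facts you leave as citations---uniqueness of the exponential-polynomial representation and shift-invariance forcing constancy---are standard and legitimately quotable, so the proof is complete as written.
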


\begin{lem}[Tanaka \cite{Tanaka1999}]\label{lem:monomial}
Let $\Omega$ be an $n\times n$ matrix with nonnegative integer entries satisfying the condition {\rm(I)} stated in Section \ref{sec:3.1}. Let $P(\z)$ be a nonzero polynomial in $C[z_1,\ldots,z_n]$. Then, if $P(\Omega\z)$ divides $P(\z)\z^{\bI}$ with $\bI\in\N^n$, then $P(\z)$ is a monomial of $z_1,\ldots,z_n$.
\end{lem}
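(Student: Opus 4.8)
\textbf{Proof proposal for Lemma~\ref{lem:monomial}.}

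The plan is to argue by contradiction via a factorization/valuation analysis that exploits condition (I). Suppose $P(\Omega\z)$ divides $P(\z)\z^{\bI}$ for some $\bI\in\N^n$, yet $P(\z)$ is not a monomial. Write $P(\z)=\z^{\bm{e}}\widetilde{P}(\z)$ where $\z^{\bm{e}}$ is the largest monomial factor of $P$, so that $\widetilde{P}(\z)$ is a polynomial not divisible by any $z_j$, that is, $\widetilde{P}(\0)$ may vanish but $\widetilde{P}$ has no monomial factor; crucially, since $P$ is not a monomial, $\widetilde{P}(\z)$ is a nonconstant polynomial with at least two distinct monomials in its support. First I would reduce the divisibility on $P$ to one on $\widetilde{P}$: applying $\Omega$ to $P=\z^{\bm{e}}\widetilde{P}$ gives $P(\Omega\z)=(\Omega\z)^{\bm{e}}\widetilde{P}(\Omega\z)$, and $(\Omega\z)^{\bm{e}}$ is again a monomial in $\z$; combining this with the hypothesis and absorbing all monomial factors into $\z^{\bI}$ (harmless because $\widetilde{P}$ has no monomial factor and $\Omega$ is nonsingular so $\widetilde{P}(\Omega\z)$ has no monomial factor either, by the same extraction applied after the substitution), I expect to obtain that $\widetilde{P}(\Omega\z)$ divides $\widetilde{P}(\z)\z^{\bI'}$ for some $\bI'\in\N^n$, and since $\widetilde{P}(\Omega\z)$ is coprime to every $z_j$, in fact $\widetilde{P}(\Omega\z)\mid\widetilde{P}(\z)$ in $C[z_1,\ldots,z_n]$.

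The heart of the argument is then a degree/growth contradiction driven by the spectral radius $\rho>1$ from condition (I). The substitution $\z\mapsto\Omega\z$ acts on monomials $\z^{\bm{v}}$ by $\bm{v}\mapsto\Omega^{\!\top}\bm{v}$ (reading off the exponent vectors), so passing from $\widetilde{P}(\z)$ to $\widetilde{P}(\Omega\z)$ transforms the Newton polytope of $\widetilde{P}$ by the linear map $\Omega^{\!\top}$. I would pick a linear functional $\lambda$ on $\R^n$ that is an eigen-covector of $\Omega^{\!\top}$ for the dominant eigenvalue $\rho$, or more robustly a generic positive functional, and track the quantity $\operatorname{width}_\lambda(\widetilde{P})\coloneqq\max_{\bm{v}}\lambda(\bm{v})-\min_{\bm{v}}\lambda(\bm{v})$ over the support. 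Since $\widetilde{P}$ is nonconstant with at least two support monomials whose difference is not annihilated by $\lambda$ (achievable for suitable $\lambda$ because $\Omega$ is nonsingular, hence $\Omega^{\!\top}$ has no kernel and cannot collapse the whole support to a single $\lambda$-value unless $\widetilde P$ were a monomial), the width is strictly positive. The divisibility $\widetilde{P}(\Omega\z)\mid\widetilde{P}(\z)$ forces $\operatorname{width}_\lambda(\widetilde{P}(\Omega\z))\le\operatorname{width}_\lambda(\widetilde{P}(\z))$. But $\Omega^{\!\top}$ expands $\lambda$-widths by the factor associated to $\rho>1$ (or at least by a factor strictly exceeding $1$ for an appropriately chosen dominant functional), so $\operatorname{width}_\lambda(\widetilde{P}(\Omega\z))=\rho\cdot\operatorname{width}_\lambda(\widetilde{P}(\z))>\operatorname{width}_\lambda(\widetilde{P}(\z))$, a contradiction.

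The main obstacle I anticipate is making the "expansion of the width" rigorous when $\Omega^{\!\top}$ is not diagonalizable or when the dominant eigenvector is not positive, so that a naive choice of $\lambda$ need not strictly increase the width. To handle this I would invoke the Perron--Frobenius-type structure guaranteed here: because $\Omega$ has nonnegative integer entries and $\rho$ is a genuine eigenvalue (as noted in the text, the largest absolute value of an eigenvalue is itself an eigenvalue), there is a nonnegative left eigenvector, and I would choose $\lambda$ along it, perturbing slightly to a strictly positive functional if needed. The delicate point is ruling out degeneracies where the support of $\widetilde P$ lies inside a proper $\Omega^{\!\top}$-invariant subspace on which the relevant eigenvalue has modulus $1$; condition (I) excludes eigenvalues that are roots of unity and guarantees $\rho>1$, and nonsingularity prevents exponent vectors from being sent to the origin, which together should force strict expansion in the dominant direction and thereby close the argument. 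An alternative, cleaner route that avoids polytope bookkeeping is to iterate: from $\widetilde{P}(\Omega\z)\mid\widetilde{P}(\z)$ one gets $\widetilde{P}(\Omega^k\z)\mid\widetilde{P}(\z)$ for all $k$, so the total degree $\deg\widetilde{P}(\Omega^k\z)$ stays bounded by $\deg\widetilde{P}$; since condition (I) makes the entries of $\Omega^k$ grow like $\rho^k\to\infty$, the total degree of $\widetilde{P}(\Omega^k\z)$ must tend to infinity unless $\widetilde{P}$ is constant, giving the contradiction and forcing $P$ to be a monomial. I would present this second route as the main line and keep the valuation/width discussion as the conceptual backbone.
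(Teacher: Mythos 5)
You cannot be checked against an in-paper argument here: the paper states this lemma as a citation to \cite{Tanaka1999} and gives no proof, so I evaluate your proposal on its own terms. The lemma is true and both of your routes are in the right spirit, but your designated main line has a genuine gap. It rests on the exact divisibility $\widetilde{P}(\Omega\z)\mid\widetilde{P}(\z)$, which you derive from the claim that $\widetilde{P}(\Omega\z)$ has no monomial factor whenever $\widetilde{P}$ has none. That claim is false. Take $n=2$ and $\Omega=\left(\begin{smallmatrix}1&1\\1&0\end{smallmatrix}\right)$ (this is the paper's own $\Omega_1$ for $\Phi(X)=X^2-X-1$; it satisfies (I), with eigenvalues $(1\pm\sqrt{5})/2$). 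Then \eqref{eq:Omegaz} gives $\Omega\z=(z_1z_2,\,z_1)$, and for $\widetilde{P}=z_1+z_2$, which has no monomial factor, one finds $\widetilde{P}(\Omega\z)=z_1z_2+z_1=z_1(z_2+1)$. The reason is that the substitution acts on exponent vectors by $\Omega^{\!\top}$, which can move support points lying on coordinate hyperplanes into the interior of the positive orthant, thereby creating monomial factors. Consequently all the hypothesis yields is $\widetilde{P}(\Omega\z)\mid\widetilde{P}(\z)\z^{\bI'}$, and iterating gives only $\widetilde{P}(\Omega^k\z)\mid\widetilde{P}(\z)\prod_{j=0}^{k-1}(\Omega^j\z)^{\bI'}$, whose right-hand side has degree growing with $k$; the bounded-total-degree contradiction of your ``cleaner route'' evaporates. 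A second, smaller flaw: ``entries of $\Omega^k$ grow like $\rho^k$'' is not what conditions (I)--(II) give — (II) is only an upper bound, and individual entries need not grow at all (take $\Omega=\diag(2,3)$). What is true, and what you actually need, is that $\|(\Omega^{\!\top})^k\bm{u}\|\to\infty$ for every nonzero $\bm{u}\in\Z^n$: otherwise infinitely many of these nonzero integer vectors lie in a bounded set, two of them coincide, and then $(\Omega^{\!\top})^{l}\bm{w}=\bm{w}$ for some $l\geq1$ and $\bm{w}\neq\bm{0}$, forcing some eigenvalue of $\Omega$ to be an $l$-th root of unity, contradicting (I).

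The good news is that your ``conceptual backbone'' closes the gap, with far less machinery than you fear. Track, for each $j$, the width $w_j(Q)\coloneqq\deg_{z_j}Q-\ord_{z_j}Q$. Widths are additive on products (the ring is a domain), hence monotone under divisibility, and they are blind to monomial factors; so substituting $\z\mapsto\Omega^{k-1}\z$ into the hypothesis and inducting gives $w_j\bigl(P(\Omega^k\z)\bigr)\leq w_j\bigl(P(\z)\bigr)$ for all $k$ and $j$ — no reduction to $\widetilde{P}$ and no exact divisibility are needed at all. On the other hand, since $\Omega^{\!\top}$ is injective, $\operatorname{supp}\bigl(P(\Omega^k\z)\bigr)=(\Omega^{\!\top})^k\operatorname{supp}(P)$ with no cancellation of monomials. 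If $P$ is not a monomial, choose $\bm{v}_1\neq\bm{v}_2$ in $\operatorname{supp}(P)$ and set $\bm{u}=\bm{v}_1-\bm{v}_2\neq\bm{0}$; then $\max_j w_j\bigl(P(\Omega^k\z)\bigr)\geq\|(\Omega^{\!\top})^k\bm{u}\|_\infty\to\infty$ by the pigeonhole fact above, contradicting the uniform bound $\max_j w_j(P)$. This is exactly your width argument, but note that it needs none of the Perron--Frobenius apparatus you were worried about (dominant eigen-covectors, positivity, invariant subspaces): the $n$ coordinate functionals suffice, since for each $k$ it is enough that one of them detects the growth.
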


\begin{proof}[Proof of Theorem \ref{thm:main4}]
Put $f(\z)\eqqcolon A(\z)/B(\z)$, where $A(\z)$ and $B(\z)$ are coprime polynomials in $\Cbar[\z]=\Cbar[\z_1,\ldots,\z_s]$. Then we have
\begin{align*}
&A(\z)B(\Omega_2\z)\prod_{i=1}^sQ_i(M(\z_i))\\
=\;&\alpha A(\Omega_2\z)B(\z)\prod_{i=1}^sQ_i(M(\z_i))+B(\z)B(\Omega_2\z)P(M(\z_1),\ldots,M(\z_s))
\end{align*}
by \eqref{eq:func eq} and \eqref{eq:R}. We can put $\gcd{(A(\Omega_2\z),B(\Omega_2\z))}\eqqcolon\z^{\bI}$, where $\bI\in\N^{sn}$, by Lemma \ref{lem:gcd}. Then
\begin{equation}\label{eq:divide}
B(\Omega_2\z)\mid B(\z)\z^{\bI}\prod_{i=1}^sQ_i(M(\z_i))
\end{equation}
and
\begin{equation}\label{eq:divide2}
B(\z)\mid B(\Omega_2\z)\prod_{i=1}^sQ_i(M(\z_i))
\end{equation}
in $\Cbar[\z]$.

First we prove that $f(\z)\in\Cbar[\z]$. For this purpose, we show that $B(\Omega_2\z)$ divides $B(\z)\z^{\bI}$. Otherwise, by \eqref{eq:divide}, there exists a prime factor $T(\z)\in\Cbar[\z]$ of $B(\Omega_2\z)$ such that $T(\z)$ divides $\prod_{i=1}^sQ_i(M(\z_i))$ in $\Cbar[\z]$. Then there exists $\tau$ $(1\leq \tau\leq s)$ such that $T(\z)=T(\z_\tau)\in\Cbar[\z_\tau]$ and
\begin{equation}\label{eq:divide3}
T(\z_\tau)\mid Q_\tau(M(\z_\tau))
\end{equation}
in $\Cbar[\z_\tau]$. Since $T(\z_\tau)$ divides $B(\Omega_2\z)$ and $B(\Omega_2\z)$ divides $B(\Omega_2^2\z)\prod_{i=1}^sQ_i(M(\Omega_1\z_i))$ by \eqref{eq:Omega'} and \eqref{eq:divide2},
\begin{equation}\label{eq:divide4}
T(\z_\tau)\mid B(\Omega_2^2\z)\prod_{i=1}^sQ_i(M(\Omega_1\z_i)).
\end{equation}
Since $Q_\tau(0)\neq0$, noting that $\Cbar$ is algebraically closed, we see that $T(\z_\tau)$ does not divide $Q_\tau(M(\Omega_1\z_\tau))$ by Lemma \ref{lem:coprime} with \eqref{eq:divide3}. Hence $T(\z_\tau)$ divides $B(\Omega_2^2\z)$ by \eqref{eq:divide4}. Continuing this process, we see that $T(\z_\tau)$ divides $B(\Omega_2^k\z)$ for all positive integers $k$. Let $B^\ast(\z_\tau)\coloneqq B(\0,\ldots,\0,\z_\tau,\0,\ldots,\0)\in\Cbar[\z_\tau]$, where $\0$'s are the $n$-dimensional zero vectors. Since $B^\ast(\0)\neq0$ by Lemma \ref{lem:Puiseux}, we see that  $T(\z_\tau)$ divides $B^\ast(\Omega_1^k\z_\tau)$ for all positive integers $k$. Then, letting $\bm{u}=(u_1,\ldots,u_n)$ be a generic point of the algebraic variety defined by $T(\z_\tau)$ over $\Cbar$, we have $B^\ast(\Omega_1^k\bm{u})=0$ for all positive integers $k$. Using the fact that
$${\rm trans.\,deg}_{\,\Cbar}\,\Cbar(\bm{u})=n-1,$$
we obtain a contradiction in the same way as in the proof of Theorem 1 of the second author \cite{Tanaka1999}. 
Therefore $B(\Omega_2\z)$ divides $B(\z)\z^{\bI}$. 
Then by Lemmas~\ref{lem:admissible} and \ref{lem:monomial}, $B(\z)$ is a monomial in $z_{11},\ldots,z_{sn}$. Since $A(\z)$ and $B(\z)$ are coprime in $\Cbar[\z]$ and since $f(\z)=A(\z)/B(\z)\in\Cbar[[\z]]$, we conclude that $f(\z)\in\Cbar[\z]$.

Secondly we show that $R(X_1,\ldots,X_s)\in\Cbar[X_1,\ldots,X_s]$. Since $f(\z)\in\Cbar[\z]$,
\begin{equation}\label{eq:divide5}
\prod_{i=1}^sQ_i(M(\z_i))\mid P(M(\z_1),\ldots,M(\z_s))
\end{equation}
in $\Cbar[\z]$ by \eqref{eq:func eq} and \eqref{eq:R}. If $R(X_1,\ldots,X_s)\notin\Cbar[X_1,\ldots,X_s]$, then there exist $\tau$ $(1\leq \tau\leq s)$ and $\gamma\in\Cbar^\times$ such that $X_\tau-\gamma$ divides $Q_\tau(X_\tau)$ but not $P(X_1,\ldots,X_s)$. Then we have
$$P(X_1,\ldots,X_s)=(X_\tau-\gamma)F(X_1,\ldots,X_s)+G(X_1,\ldots,X_{\tau-1},X_{\tau+1},\ldots,X_s),$$
where $F(X_1,\ldots,X_s)\in\Cbar[X_1,\ldots,X_s]$ and $G(X_1,\ldots,X_{\tau-1},X_{\tau+1},\ldots,X_s)$\newline$\in\Cbar[X_1,\ldots,X_{\tau-1},X_{\tau+1},\ldots,X_s]\setminus\{0\}$. Since $M(\z_\tau)-\gamma$ divides $P(M(\z_1),\ldots,M(\z_s))$ by \eqref{eq:divide5}, 
\begin{equation}\label{eq:divide6}
M(\z_\tau)-\gamma\mid G(M(\z_1),\ldots,M(\z_{\tau-1}),M(\z_{\tau+1}),\ldots,M(\z_s))
\end{equation}
in $\Cbar[\z]$. Noting that $G(X_1,\ldots,X_{\tau-1},X_{\tau+1},\ldots,X_s)\neq0$, we see that there exist $\bb=(b_1,\ldots,b_{\tau-1},b_{\tau+1},\ldots,b_s)\in{\Cbar}^{\,s-1}$ such that $G(\bb)\neq0$. Since $R_0,\ldots,R_{n-1}$ are not all zero, there exist $\bm{a}_1,\ldots,\bm{a}_{\tau-1},\bm{a}_{\tau+1},\ldots,\bm{a}_s\in{\Cbar}^{\,n}$ such that $M(\bm{a}_i)=b_i$ $(i\in\{1,\ldots,s\}\setminus\{\tau\})$. Therefore by \eqref{eq:divide6}
$$M(\z_\tau)-\gamma\mid G(M(\bm{a}_1),\ldots,M(\bm{a}_{\tau-1}),M(\bm{a}_{\tau+1}),\ldots,M(\bm{a}_s))=G(\bb)\in\Cbar^\times$$
in $\Cbar[\z_\tau]$, which is a contradiction.

Finally we prove that $R(X_1,\ldots,X_s)\in\Cbar$, which implies $f(\z)\in\Cbar$ by Lemma~\ref{lem:const}. On the contrary we assume that $R(X_1,\ldots,X_s)\in\Cbar[X_1,\ldots,X_s]\setminus\Cbar$. Let $\delta$ be the number of terms appearing in $f(\z)\in\Cbar[\z]$. Iterating \eqref{eq:func eq}, we get
$$f(\z)-\alpha^{2\delta+1}f(\Omega_2^{2\delta+1}\z)=\sum_{k=0}^{2\delta}\alpha^kR(M(\Omega_1^k\z_1),\ldots,M(\Omega_1^k\z_s)).$$
Then the number of terms appearing in the right-hand side is at least $2\delta+1$, since $(R_{k+n-1}:\cdots:R_k)\neq(R_{k'+n-1}:\cdots:R_{k'})$ in $P^{n-1}(\Q)$ for any distinct nonnegative integers $k$ and $k'$ by Lemma \ref{lem:not geom} and so the nonconstant terms appearing in the right-hand side never cancel one another. This is a contradiction, since the number of terms appearing in the left-hand side is at most $2\delta$, and the proof of the theorem is completed.
\end{proof}

%
%

\baselineskip=13pt


\begin{thebibliography}{99}

\bibitem{Adamczewski} B. Adamczewski and C. Faverjon, {\it Mahler's method in  several variables I: The theory of regular singular systems}, preprint (2018), arXiv:1809.04823 [math.NT].
\bibitem{Gant} F. R. Gantmacher, {\it Applications of the Theory of Matrices}, New York, Interscience (1959).
\bibitem{Ide} H. Ide, {\it Algebraic independence of certain entire functions of two variables generated by linear recurrences}, to appear in Acta Arith.
\bibitem{Kubota} K. K. Kubota, {\it On the algebraic independence of holomorphic solutions of certain functional equations and their values}, Math. Ann. {\bf227} (1977), 9--50.
\bibitem{KTT} T. Kurosawa, Y. Tachiya, and T. Tanaka: {\it Algebraic independence results for the values of certain Mahler functions and their 
application to infinite products}, Monatsh. Math. {\bf 174} (2014), 77--104. 
\bibitem{Nishioka1986} K. Nishioka, {\it Algebraic independence of certain power series of algebraic numbers}, J. Number Theory {\bf 23} (1986), 353--364.
\bibitem{Nishioka1996} K. Nishioka, {\it Algebraic independence of Mahler functions and their values}, Tohoku Math. J. {\bf 48} (1996), 51--70.
\bibitem{Nishioka} K. Nishioka, {\it Mahler functions and transcendence}, Lecture Notes in Mathematics No. {\bf 1631}, Springer (1996).
\bibitem{Sch-Sch} H. P. Schlickewei and W. M. Schmidt, {\it The number of solutions of
polynomial-exponential equations}, Compositio Math. {\bf 120} (2000), 193--225. 
\bibitem{Tanaka1996} T. Tanaka, {\it Algebraic independence of the values of power series generated by linear recurrences}, Acta Arith. {\bf 74} (1996), 177--190.
\bibitem{Tanaka1999} T. Tanaka, {\it Algebraic independence results related to linear recurrences}, Osaka J. Math. {\bf 36} (1999), 203--227.
\bibitem{Tanaka2012} T. Tanaka, {\it Algebraic independence properties related to certain infinite products}, AIP Conference Proceedings {\bf 1385} (2011), 116--123.
\end{thebibliography}
\end{document}